\documentclass[10pt]{amsart}
%%%%%%%%%%%%%%%%%%%%%%%%%%%%%%%%%%%%%%%%%%%%%%%%%%%%%%%%%%%%%%%%%%%%%%%%%%%%%%%%%%%%%%%%%%%%%%%%%%%%%%%%%%%%%%%%%%%%%%%%%%%%%%%%%%%%%%%%%%%%%%%%%%%%%%%%%%%%%%%%%%%%%%%%%%%%%%%%%%%%%%%%%%%%%%%%%%%%%%%%%%%%%%%%%%%%%%%%%%%%%%%%%%%%%%%%%%%%%%%%%%%%%%%%%%%%
\usepackage{verbatim}
\usepackage{eucal,url,amssymb,stmaryrd,enumerate,amscd}
\usepackage{amsmath}
\usepackage[pagebackref,colorlinks=true,linkcolor=blue,citecolor=blue]{hyperref}
\usepackage{amsfonts}
\usepackage{amsthm}
\usepackage[margin=1in]{geometry}
\usepackage{tensor}
\linespread{1.06}
\sloppy
%\nopagebreak
\setcounter{MaxMatrixCols}{10}

\numberwithin{equation}{section}
\newtheorem{thrm}{Theorem}[section]
\newtheorem{lemma}[thrm]{Lemma}
\newtheorem{prop}[thrm]{Proposition}
\newtheorem{cor}[thrm]{Corollary}
\newtheorem{dfn}[thrm]{Definition}

\newtheorem{conv}[thrm]{Convention}

\newcommand{\veps}{\varepsilon}

\newcommand{\QH}{\boldsymbol {G\,(\mathbb{H})}}

\newcommand{\talpha}{\tilde{\alpha}}
\newcommand{\tbeta}{\tilde{\beta}}
\newcommand{\tgamma}{\tilde{\gamma}}

\overfullrule 5pt
 1

\newcommand{\vol}{\, Vol_{\eta}}

\newcommand{\W}{W^{qc}}

\begin{document}

\begin{abstract} It is shown that the qc Yamabe problem has a solution on any compact qc manifold which is non-locally qc equivalent to the standard 3-Sasakian sphere.
Namely, it is proved that on a compact non-locally spherical qc manifold there exists a qc conformal qc structure with constant qc scalar curvature.
\end{abstract}

\keywords{}

\title[The qc Yamabe problem on non-spherical qc manifolds]{The qc Yamabe problem on non-spherical quaternionic contact manifolds}
\date{\today }
%\thanks{This work has been partially funded by}
\author{S. Ivanov}
\address[Stefan Ivanov]{University of Sofia, Faculty of Mathematics and
Informatics, blvd. James Bourchier 5, 1164, Sofia, Bulgaria}
\address{and Institute of Mathematics and Informatics, Bulgarian Academy of
Sciences} \email{ivanovsp@fmi.uni-sofia.bg}
\author{A. Petkov}
\address[Alexander Petkov]{University of Sofia, Faculty of Mathematics and
Informatics, blvd. James Bourchier 5, 1164, Sofia, Bulgaria}
\email{a\_petkov\_fmi@abv.bg}

\maketitle
\tableofcontents

\setcounter{tocdepth}{2}

\section{Introduction}

It is well known that the sphere at infinity of a  non-compact
symmetric space $M$ of rank one carries a natural
Carnot-Carath\'eodory structure, see \cite{M,P}.  In the real
hyperbolic case one obtains the conformal class of the round
metric on the sphere. In the remaining cases, each of the complex,
quaternionic and octonionic   hyperbolic metrics on the unit ball
induces a Carnot-Carath\'eodory  structure on the unit sphere.
This defines a conformal structure  on a sub-bundle of  the
tangent bundle of co-dimension $\dim_{\mathbb{R}}\mathbb{K}-1$,
where $\mathbb{K}=\mathbb{C},\,  \mathbb{H},\,  \mathbb{O}$. In
the complex case the obtained geometry is the well studied
standard CR structure on the unit sphere in complex space. In the quaternionic case one arrives at the notion of a quaternionic contact structure.
The quaternionic contact (qc) structures were introduced by O. Biquard,
see \cite{Biq1}, and are modeled on  the conformal boundary at
infinity of the quaternionic hyperbolic space. Biquard showed that
the infinite dimensional family \cite{LeB91} of complete
quaternionic-K\"ahler deformations of the quaternionic hyperbolic
metric have conformal infinities which provide an infinite
dimensional family of examples of qc structures. Conversely,
according to \cite{Biq1} every real analytic qc structure on a
manifold $M$ of dimension at least eleven is the conformal
infinity of a unique quaternionic-K\"ahler metric defined in a
neighborhood of $M$. Furthermore,  \cite{Biq1} considered CR and
qc structures as boundaries at infinity of Einstein metrics rather
than only as boundaries at infinity of  K\"ahler-Einstein and
quaternionic-K\"ahler metrics, respectively.  In fact, in
\cite{Biq1} it was shown that in each of the three cases (complex,
quaternionic, octoninoic)  any small perturbation of the standard
Carnot-Carath\'eodory structure on the boundary is the conformal
infinity of an essentially unique Einstein metric on the unit
ball, which is asymptotically symmetric.  In the Riemannian case
the corresponding question was posed  in \cite{FGr85} and  the
perturbation result was proven in \cite{GrL91}.

There is a deep analogy between the geometry of  qc manifolds and
the geometry of strictly pseudoconvex CR manifolds as weel as the
geometry of conformal Riemannian manifolds. The qc structures, appearing  as the boundaries at infinity of
asymptotically hyperbolic quaternionic manifolds,  generalize to
the quaternion algebra the sequence of families of geometric
structures that are the boundaries at infinity of real and complex
asymptotically hyperbolic spaces. In the real case, these
manifolds are simply conformal manifolds and in the complex case, the boundary
structure is that of a CR manifold.

A natural extension of the Riemannian and the CR Yamabe problems is
the quaternionic contact  (qc) Yamabe problem, a particular case of
which \cite{GV,Wei,IMV,IMV1}  amounts to finding  the best
constant in the $L^2$ Folland-Stein Sobolev-type embedding and the
functions for which the equality is achieved, \cite{F2} and
\cite{FS}, with a complete solution  on the  quaternionic
Heisenberg groups  given in \cite{IMV1,IMV2,IMV4}.

{Following Biquard, a quaternionic contact structure
(\emph{qc structure}) on a real (4n+3)-dimensional manifold $M$ is
a codimension three distribution $H$ (\emph{the horizontal distribution}) locally given as the kernel of a $%
\mathbb{R}^3$-valued one-form $\eta=(\eta_1,\eta_2,\eta_3)$, such
that, the three two-forms $d\eta_i|_H$ are the fundamental forms
of a quaternionic Hermitian structure on $H$. The 1-form $\eta$
is determined up to a conformal factor and the action of $SO(3)$ on $\mathbb{R}^3$, and therefore $%
H$ is equipped with a conformal class $[g]$ of quaternionic
Hermitian metrics.

For a qc manifold of crucial importance is the existence of a
distinguished linear connection  $\nabla$ preserving the qc
structure, defined by O. Biquard in \cite{Biq1}, and its scalar
curvature $S$,  called  qc-scalar curvature. The Biquard
connection  plays a role similar to the Tanaka-Webster connection
\cite{W} and \cite{T} in the CR case. A natural question coming
from the conformal freedom of the qc structures is the
\emph{quaternionic contact Yamabe problem \cite{Wei,IMV,IV2}: The
qc Yamabe problem on a compact qc manifold $M$ is the problem  of
finding a metric $\bar g\in [g]$ on $H$ for which the qc-scalar
curvature is constant.}

The question reduces to the solvability of the quaternionic
contact (qc) Yamabe equation
\begin{equation*}
\mathcal{L} f:=4\frac {Q+2}{Q-2}\ \triangle f -\ f\, S \
=\ - \ f^{2^*-1}\ \overline{S},
\end{equation*}
where $\triangle $ is the horizontal sub-Laplacian, $\triangle f\
=\ tr^g(\nabla^2f)$,  $S$ and $\overline{S}$ are the qc-scalar
curvatures
correspondingly of $(M,\, \eta)$ and $(M, \, \bar\eta=f^{4/(Q-2)}\eta)$, where $2^* = \frac {2Q%
}{Q-2},$ with $Q=4n+6$-the homogeneous dimension.

Complete solutions to the qc Yamabe equation on the 3-Sasakian
sphere, and more general, on any compact 3-Sasakian manifold were
found in \cite{IMV4}. The case of the sphere is rather important
for the general solution of the qc Yamabe problem since it
provides a family of  "test functions" used in attacking the
general case.

The qc  Yamabe problem  is of variational nature as we remind next (see e.g. \cite{Wei,IMV,IV2,IV5}).
Given a quaternionic contact (qc) manifold $(M,[\eta])$ with a fixed conformal class defined by a quaternionic contact form $\eta$,
solutions to the quaternionic contact Yamabe problem are critical points of the \emph{qc Yamabe functional}
\begin{equation*}\label{Yamabefunc}
\Upsilon_M(\eta):=\frac{\int_MS\,Vol_{\eta}}{\big(\int_M\,Vol_{\eta}\big)^{2/2^*}},
\end{equation*}
where  $Vol_{\eta}$ is the natural volume form, associated to the contact form $\eta,$ and $2^*:=\frac{2n+3}{n+1}.$ The \emph{qc Yamabe constant} is defined by
\begin{equation*}\label{Yamabeconst}
\lambda(M):=\lambda(M,[\eta])=\underset{\eta}{\mathrm{inf}}\Upsilon_M(\eta).
\end{equation*}
If $\eta$ is a fixed qc contact form one considers  the functional (which is also called \emph{qc Yamabe functional})
\begin{equation}\label{Yamabefunc1}
\Upsilon_{\eta}(f):=\Upsilon_M(f^{2^*-2}\eta) =\frac{\ \int_M\Bigl(4\frac {Q+2}{Q-2}\ \lvert \nabla f
\rvert^2_\eta\ +\ {S}\, f^2\Bigr)\,Vol_{\eta}}{\big(\int_Mf^{2^*}\,Vol_{\eta}\big)^{2/2^*}}
=\frac{\int_M\big(4\frac{n+2}{n+1}|\nabla f|_\eta^2+Sf^2\big)\,Vol_{\eta}}{\big(\int_Mf^{2^*}\,Vol_{\eta}\big)^{2/2^*}},
\end{equation}
where $0<f\in C^{\infty}(M)$, $\nabla f$ is the horizontal gradient of $f$. The qc Yamabe constant can be expressed as
\begin{equation*}
\lambda(M)=\underset{f}{\mathrm{inf}}\Upsilon_{\eta}(f)
\end{equation*}
and the qc Yamabe equation characterizes the
non-negative extremals of the qc Yamabe functional \eqref{Yamabefunc1}.

The main result of W. Wang  \cite{Wei} states that the qc Yamabe constant of a compact quaternionic contact manifold is always less or equal than that of the standard 3-Sasakian sphere,
$\lambda(M)\leq\lambda(S^{4n+3})$ and, if the constant is strictly less than that of the sphere, the qc Yamabe problem has a solution,
i.e. there exists a global qc conformal transformation sending the given qc structure to a qc structure with constant qc scalar curvature.

The qc Yamabe constant on the standard unit 3-Sasakian sphere is calculated in \cite{IMV1,IMV2,IMV4}. It is shown \cite[Theorem~1.1]{IMV2} that
$$\Lambda=\lambda(S^{4n+3})=\frac{16n(n+2)2^{\frac1{2n+3}}}{(2n+1)^{\frac1{2n+3}}}\pi^{\frac{2n+2}{2n+3}}.
$$

Guided by the conformal and CR cases, a natural conjecture is that
the qc Yamabe constant of every compact locally
non-flat qc manifold (in conformal quaternionic contact sense) is
strictly less than the qc Yamabe constant of the
sphere with its standard 3-Sasakian qc structure
\cite{IMV,IV2}.

The purpose of this paper is to confirm the above conjecture. Our
main result  is
\begin{thrm}\label{mainth}
Suppose $M$ is a compact qc manifold of dimension $4n+3$. If $M$
is not locally qc equivalent to the standard 3-Sasakian structure
on the sphere $S^{4n+3}$ then $\lambda(M)<\Lambda$, and thus the qc Yamabe problem can be solved
on $M$.
\end{thrm}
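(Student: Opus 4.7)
The plan is as follows. By the theorem of Wang \cite{Wei} recalled in the introduction, the existence of a minimizer of the qc Yamabe functional follows as soon as the strict inequality $\lambda(M)<\Lambda$ is established. The task therefore reduces to exhibiting a single positive test function $f\in C^\infty(M)$ for which $\Upsilon_\eta(f)<\Lambda$. I would follow the classical Aubin/Jerison--Lee philosophy: concentrate a ``bubble'' (an extremal of the Folland--Stein inequality on the quaternionic Heisenberg group) at a point where the qc structure fails to be locally flat, and read off the first-order correction to $\Lambda$ from the local geometry.

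The first step is to fix a base point $p\in M$ where non-flatness is detected by a local curvature invariant. Since $M$ is not locally qc equivalent to the standard 3-Sasakian sphere, the qc-conformal curvature tensor $\W$ must be non-zero at some $p\in M$ (in dimension $4n+3$ with $n\geq 2$); in the seven-dimensional case $n=1$, where $\W$ vanishes identically, the obstruction comes instead from a qc-Cartan-type tensor. After a preliminary qc-conformal rescaling one may assume $\eta$ is in Biquard canonical form at $p$, so that in qc normal coordinates around $p$ the horizontal coframe, the qc-scalar curvature $S$, the volume form $Vol_\eta$ and the sub-Laplacian $\triangle$ all admit Taylor expansions in the Carnot--Carath\'{e}odory gauge distance $\rho$ to $p$ whose lowest non-trivial coefficients are precisely the chosen curvature tensor evaluated at $p$.

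The test functions are obtained by transplanting the extremals of the $L^2$ Folland--Stein embedding on $\QH$ computed in \cite{IMV1,IMV2,IMV4}. Denote by $U_\veps$ the resulting one-parameter family of bubbles, normalized so that on the model Heisenberg group they attain the sharp constant $\Lambda$; pulling $U_\veps$ back via the normal chart and multiplying by a smooth horizontal cutoff $\chi$ supported in a small neighborhood of $p$, I set $f_\veps=\chi\,U_\veps$. The expected expansion is
\begin{equation*}
\Upsilon_\eta(f_\veps)\;=\;\Lambda \;+\; \mathcal{E}(p)\,\phi(\veps) \;+\; o(\phi(\veps)),\qquad \veps\to 0,
\end{equation*}
where $\phi(\veps)\to 0$ at an explicit rate controlled by the homogeneous dimension $Q=4n+6$, and the coefficient $\mathcal{E}(p)$ is an integral of a quadratic polynomial in the chosen curvature tensor against the radial profile of $U_\veps$.

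The main obstacle is the sign analysis of $\mathcal{E}(p)$. The computation splits into three delicate but largely mechanical steps: (i) expanding $|\nabla f_\veps|^2_\eta$ and $Sf_\veps^2$ in the qc normal frame up to just that order of $\rho$ which first feels the curvature; (ii) integrating by parts on $\QH$, using the extremal equation $\mathcal{L}U_\veps=-U_\veps^{2^*-1}\Lambda$, to convert mixed terms into integrals of geometric tensors against $U_\veps^{2^*}$; and (iii) controlling the cutoff and tail errors by the decay of $U_\veps$ at infinity. The hard part is step (ii): one must arrange the normal form of $\eta$ so that the quadratic form defining $\mathcal{E}(p)$ is manifestly non-positive and strictly negative precisely when the obstruction to local flatness is non-zero at $p$. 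In the case $n=1$ an additional, parallel expansion driven by the qc-Cartan tensor is needed since $\W$ is not available. Once $\mathcal{E}(p)<0$ has been verified at a non-flat base point, choosing $\veps$ sufficiently small yields $\Upsilon_\eta(f_\veps)<\Lambda$, hence $\lambda(M)<\Lambda$, and Wang's theorem then supplies the desired qc conformal factor with constant qc-scalar curvature.
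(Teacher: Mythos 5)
Your overall strategy is the same as the paper's: invoke Wang's theorem to reduce everything to the strict inequality $\lambda(M)<\Lambda$, and obtain that inequality by transplanting the Folland--Stein extremals of $\QH$ into normal coordinates centered at a point of non-flatness and expanding the Yamabe functional. However, there is a concrete error in your first step. You assert that in dimension seven ($n=1$) the qc conformal curvature $\W$ ``vanishes identically'' and that the obstruction must instead come from a ``qc-Cartan-type tensor.'' This imports the wrong analogy from the Riemannian and CR cases. In the qc setting the theorem of Ivanov--Vassilev states that $\W=0$ characterizes local qc equivalence to the $3$-Sasakian sphere in \emph{every} dimension $4n+3\geq 7$: when $n=1$ the horizontal distribution is four-dimensional, where a Weyl-type tensor does not vanish identically, and no substitute tensor is needed. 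Indeed the absence of any dimensional restriction is precisely the point the paper emphasizes as the surprising difference from the Aubin and Jerison--Lee theorems. Your proposed separate $n=1$ branch of the argument is therefore both unnecessary and based on a false premise; the paper's only dimension-$7$ issue is checking that Kunkel's normalization (proved for $n>1$) extends to $n=1$, which it does.

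The second, larger issue is that your plan defers exactly the step that constitutes the proof. The paper's entire content is the establishment of the precise expansion $\Upsilon_\eta(f^\veps)=\Lambda\bigl(1-c(n)\,\|\W(q)\|^2\,\veps^4\bigr)+O(\veps^5)$ with $c(n)=\frac{22n+3}{2304\,n^2(2n+1)(2n+3)}>0$. This is achieved not by integration by parts against the extremal equation, as you suggest, but by (i) Kunkel's qc parabolic normal coordinates, in which all curvature and torsion invariants of weight at most $4$ other than $\|\W\|^2$ vanish at the center, so that invariance theory forces the $\veps$, $\veps^2$, $\veps^3$ coefficients to be zero; and (ii) an explicit evaluation of the weight-$4$ coefficient by reducing the curvature contractions to integrals of monomials over $S^{4n-1}$ and verifying the sign. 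Saying that ``one must arrange the normal form so that the quadratic form is manifestly non-positive'' names the difficulty without resolving it: a priori the coefficient could vanish or have the wrong sign, and nothing in your outline rules this out. As written, the proposal is a correct high-level plan with one false claim about $n=1$ and no argument for the decisive sign.
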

This is analogous to the result of T. Aubin \cite{Au} for the
Riemannian version of the Yamabe problem: Every compact Riemannian
manifold of  dimension bigger than 5 which is not locally
conformally flat posses a conformal metric of constant scalar
curvature and the result of D. Jerison \& J. M. Lee \cite{JL} for
the CR version of the Yamabe problem: Every compact CR manifold of
dimension bigger than $3$ which is not locally CR equivalent to the
sphere posses a conformal pseudohermitian metric of constant
pseudohermitian scalar curvature.  Aubin's result is limited to
dimensions bigger than $5$. In the remaining cases the problem was
solved by R. Schoen in \cite{Sh} (see also \cite{LP} as well as
\cite{Bah,BahBre} for a different approach to the Riemannian
Yamabe problem). Similarly, Jerison and Lee's result is limited to
dimensions bigger than $3$ and in the remaining cases the problem
was solved by N. Gamara in \cite{Ga} and N. Gamara and R. Yacoub
in \cite{GaY}.

\emph{Surprisingly in our theorem for the qc case there is no
dimensional restriction which is a bit  different with the
Riemannian and the CR cases.}

To achieve  the result we follow and adapt to the qc case the
steps of the Jerison \& Lee's theorem \cite{JL} which solves the
CR Yamabe problem on non-spherical CR manifold of dimension bigger
than $3$. The main idea (as in \cite{JL}) is to find a precise
asymptotic expression of the qc Yamabe functional.
Our efforts throughout the present paper are concentrated on the establishment of a local asymptotic expression of the qc Yamabe functional \eqref{Yamabefunc1}
in terms of the
Yamabe invariant of the sphere and the norm of the qc conformal
curvature $W^{qc}$ introduced by Ivanov-Vassilev in \cite{IV}. The
next step is to show that the term in front of $||W^{qc}||^2$ in
this expression is a negative constant and then apply the  qc
conformal flatness theorem  which states that $W^{qc}=0$ if and only if the qc
manifold is locally qc equivalent to the standard 3-Sasakian
sphere \cite[Theorem~1.2, Corollary~1.3]{IV}.

The key idea is that the 3-Sasakian sphere posses a one-parameter
family of extremal qc contact forms that concentrate near a point.
Instead of the sphere one uses as a model the quaternionic
Heisenberg group $\QH=\mathbb H^n\times Im(\mathbb H)$. The Cayley
transform \cite{IMV} gives a qc-equivalence between $\QH$ and the
sphere minus a point, which allows us to think of the standard
spherical 3-Sasakian form as a qc form on $\QH$.

The Heisenberg group carries a natural family of parabolic
dilations: for $s > 0$, the map
$\delta_s(x^{\alpha},t^i)=(sx^{\alpha},s^2t^i)$ is a qc
automorphism of $\QH$. These dilations give rise to a family of
extremal qc contact forms $\Theta^{\veps} =
\delta^*_{1/{\veps}}\Theta$ on $\QH$ which become
more and more concentrated near the origin as $\veps
\rightarrow 0$. We show that the qc Yamabe functional $\Upsilon_M$
is closely approximated by $\Upsilon_{\QH}$ for contact forms
supported very near the base point.

 We present a precise
asymptotic expression for $\Upsilon_M(\eta^{\veps})$ as
$\veps\rightarrow 0$ for a suitably  chosen qc contact forms
$\eta^{\veps}$. To this end we use the intrinsic qc normal
coordinates introduced by Ch. Kunkel in \cite{Kunk}. The main
ingredient in the Kunkel's construction of these coordinates is
the fact that the tangent space of a quaternionic contact manifold
has a natural parabolic dilation, instead of the more common
linear dilation seen in Riemannian geometry. Kunkel used these
curves to define an exponential map from the tangent space at a
point to the base manifold and showed that this exponential map
incorporate this parabolic structure. Using this map and a special
frame at the center point Kunkel was able to construct a set of
parabolic normal coordinates. Using these parabolic normal
coordinates and the effect of a conformal change of qc contact
structure on the curvature and torsion tensors of the Biquard
connection Kunkel was able to define a function that, when used as
the conformal factor, causes the symmetrized covariant derivatives
of a certain tensor constructed from the curvature and torsion to
vanish. More precisely, using invariance theory, Kunkel showed in
\cite{Kunk} that the only remaining term of weight less than or
equal to $4$ that does not necessarily vanish at the center point
is the squared norm of the qc conformal curvature, $||W^{qc}||^2$.

Using the above coordinates around any fixed point $q\in M$ of an arbitrary qc manifold $(M,[\eta])$  we define the "test forms"
$\eta^{\veps}=(f^{\veps})^{2^*-2}\eta$, where $\eta$ is normalized at $q$ qc contact form and $f^{\veps}$ is a suitable "test function"
inspired from the solution of the qc Yamabe equation on the 3-Sasakian sphere found in \cite{IMV2,IMV4} and compute an asymptotic formula for
$\Upsilon_M(\eta^{\veps})$ as $\veps \rightarrow 0$.

We
show  in Section~\ref{explicit}, Theorem~\ref{expansion}, that %for an arbitrary qc manifold $(M,[\eta])$ and any fixed point $q\in M$
the next  formula holds
\begin{equation}\label{expansionfunc1}
\Upsilon_M(\eta^{\veps})=\Upsilon_{\eta}(f^{\veps})=\Lambda(1-c(n)||W^{qc}(q)||^2\veps^4)+O(\veps^5),
\end{equation}
where $c(n)$ is a positive dimensional constant.

Finally, we compute the exact value of the constant $c(n)$ and
show that it is strictly positive. Since, $W^{qc}$ is identically
zero precisely when M is locally qc equivalent to the sphere
\cite{IV}, under the hypotheses of Theorem~\ref{mainth} there is a
point $q\in M$ where $W^{qc}(q)\not=0$. This implies that for
$\veps$ small enough we can achieve
$\Upsilon_M(\eta^{\veps})<\Lambda$ and  applying the main result of W. Wang \cite{Wei} we prove Theorem~\ref{mainth}.

\begin{conv}
\label{conven} \hfill\break\vspace{-15pt}
\begin{enumerate}[a)]
\item We shall use the following index convention:\\$a,b,c,...\in\{1,...,4n+3\}$, $\alpha,\beta,\gamma,...\in\{1,...,4n\}$, $\tilde{\alpha},\tilde{\beta},\tilde{\gamma},...\in\{4n+1,4n+2,4n+3\}$, $i,j,k,...\in\{1,2,3\}$.
%\item The order of $a$ is defined by $o(a)=1$ if $a\in\{1,...,4n\}$ and $o(a)=2$ otherwise; for a multi-index $A=(a_1,\ldots,a_s)$ we denote $\#A=s$ and the order of $A$ is defined by %$o(A)=o(a_1)+\cdots+o(a_s);$ finally, for a multi-index $A=(a_1,\ldots,a_s)$ and a set of vector fields $Z_{a_1},\ldots,Z_{a_s}$ we shall denote $Z_A=Z_{a_s}\ldots Z_{a_1}$ and %similarly for similar expressions.
\item The summation convention over repeated indices will be used (unless otherwise stated). For example, $A_{\alpha\beta\alpha\gamma}=\sum_{\alpha=1}^{4n}A_{\alpha\beta\alpha\gamma}$ and $B_{i}C^{i}=\sum_{i=1}^3B_{i}C^{i}$ and s.o.
\end{enumerate}
\end{conv}

\textbf{Acknowledgments} We thank   I. Minchev and D. Vassilev
for many useful discussions. The research is partially supported
by Contract DFNI I02/4/12.12.2014 and  Contract 195/2016 with the
Sofia University "St.Kl.Ohridski".

\section{Quaternionic contact manifolds}

In this section we will briefly review the basic notions of
quaternionic contact geometry and recall some results from
\cite{Biq1}, \cite{IMV} and \cite{IV} which we will use in this
paper. Since we will  work with the Kunkel's  qc parabolic normal
coordinates, we will follow the notations in \cite{Kunk}.

\subsection{Quaternionic contact structures and the Biquard connection}
A quaternionic contact (qc) manifold $(M, \eta,g, \mathbb{Q})$ is a $4n+3$%
-dimensional manifold $M$ with a codimension three distribution
$H$ locally given as the kernel of a 1-form
$\eta=(\eta^1,\eta^2,\eta^3)$ with values in $\mathbb{R}^3$. In
addition $H$ has an $Sp(n)Sp(1)$ structure, that is, it is
equipped with a Riemannian metric $g$ and a rank-three bundle
$\mathbb{Q}$ consisting of endomorphisms of $H$ locally generated
by three almost complex structures $I^1,I^2,I^3$ on $H$ satisfying
the identities of the imaginary unit quaternions,
$I^1I^2=-I^2I^1=I^3, \quad I^1I^2I^3=-id_{|_H}$ which are
hermitian compatible with the metric $g(I^i.,I^i.)=g(.,.)$ with the
 compatibility condition  $\quad 2g(I^iX,Y) =
d\eta^i(X,Y),  i=1,2,3, \quad X,Y\in H.$

The transformations preserving a given qc
structure $\eta$, i.e., $\bar\eta=f\Psi\eta$ for a positive
smooth function $f$ and an $SO(3)$ matrix $\Psi$ with smooth
functions as entries are called \emph{quaternionic contact
conformal (qc-conformal) transformations}. If the function $f$
is constant $\bar\eta$ is called qc-homothetic to $\eta$. The qc
conformal curvature tensor $W^{qc}$, introduced in \cite{IV}, is
the obstruction for a qc structure to be locally qc conformal to
the standard 3-Sasakian structure on the $(4n+3)$-dimensional
sphere \cite{IMV,IV}.

A special phenomena, noted in \cite{Biq1}, is that the contact
form $\eta$ determines the quaternionic structure and the metric
on the horizontal distribution in a unique way.

On a qc manifold with a fixed metric $g$ on $H$ there exists a
canonical
connection defined first by Biquard in \cite{Biq1} when the dimension $(4n+3)>7$, and in \cite%
{D} for the 7-dimensional case. Biquard showed that there  is a
unique connection $\nabla$ with torsion $T$  and a unique
supplementary subspace $V$ to $H$ in $TM$, such that:
\begin{enumerate}[(i)]
\item $\nabla$ preserves the decomposition $H\oplus V$ and the $
Sp(n)Sp(1)$ structure on $H$, i.e. $\nabla g=0, \nabla\sigma
\in\Gamma( \mathbb{Q})$ for a section
$\sigma\in\Gamma(\mathbb{Q})$, and its torsion on $H$ is given by
$T(X,Y)=-[X,Y]_{|V}, \quad X,Y\in H$; \item for $R\in V$, the endomorphism
$T(R,.)_{|H}$ of $H$ lies in $ (sp(n)\oplus sp(1))^{\bot}\subset
gl(4n)$; \item the connection on $V$ is induced by the natural
identification $ \varphi$ of $V$ with the subspace $sp(1)$ of the
endomorphisms of $H$, i.e. $ \nabla\varphi=0$.
\end{enumerate}
%\end{thrm}
This canonical connection is also known as the \emph{Biquard
connection}. When the dimension of $M$ is
at least eleven Biquard \cite{Biq1} also described the supplementary distribution $V$%
, which is (locally) generated by the so called Reeb vector fields $%
\{R_1,R_2,R_3\}$ determined by
\begin{equation}  \label{bi1}
%\begin{aligned}
\eta^i(R_j)=\delta^i_j, \qquad (R_i\lrcorner d\eta^i)_{|H}=0\quad
\textnormal{with no summation}, \qquad (R_i\lrcorner
d\eta^j)_{|H}=-(R_j\lrcorner d\eta^i)_{|H},
% \end{aligned}
\end{equation}
where $\lrcorner$ denotes the interior multiplication. If the dimension of $%
M $ is seven Duchemin shows in \cite{D} that if we assume, in
addition, the existence of Reeb vector fields as in \eqref{bi1},
then   the Biquard result  holds. \emph{Henceforth, by a qc
structure in dimension $7$ we shall mean a qc structure satisfying
\eqref{bi1}}.

Letting $$\omega\indices{_i}{^j}=R_i\lrcorner d\eta^j|_H\quad\textnormal{we have}\quad \omega\indices{_i}{^j}=-\omega\indices{_j}{^i}$$ and these are the $sp(1)$-connection 1-forms of $\nabla$,
i.e. the restriction to $H$ of the connection 1-forms for $\nabla$
on $V$. The isomorphism $\varphi$ is then simply
$\varphi(R_i)=I_i$ and these are the connection 1-forms on
$\mathbb Q$.

Notice that equations \eqref{bi1} are invariant under the natural
$SO(3)$
action. Using the triple of Reeb vector fields we extend the metric $g$ on $%
H $ to a Riemannian metric on $TM$ by requiring
$span\{R_1,R_2,R_3\}=V\perp H.$ The extended Riemannian metric
$g\oplus\sum(\eta^i)^2$ as well as the Biquard connection do not
depend on the action of $SO(3)$ on $V$, but both change  if $\eta$
is multiplied by a conformal factor \cite{IMV}. Clearly, the
Biquard connection preserves the Riemannian metric on $TM$.

Consider a frame $\{\xi_{\alpha}, R_i\}_{\alpha=1,...,4n;i=1,2,3},$
where $\{\xi_{\alpha}\}$ is an Sp(n)Sp(1) frame for H, and
$R_i$ are the three Reeb vector fields described above. It is
occasionally convenient to have a notation for the entire frame;
therefore as necessary we may refer to $R_i$ as
$\xi_{4n+i}$. In order to have a consistent
index notation we will use different letters for different ranges
of indices as in Convention~\ref{conven}.  For the dual basis we use
$\theta^{\alpha}$ and $\eta^i$. We note that both $H$ and
$V$ are orientable. The horizontal bundle is orientable since it
admits an $Sp(n)Sp(1)\subset SO(4n)$ structure and $\mathbb Q$ has
an $SO(3)$ structure, hence so does $V$. The natural volume form
on $V$ is given by $\epsilon=\eta^1\wedge\eta^2\wedge\eta^3$ ,
and this tensor provides a handy isomorphism between $V$ and
$\Lambda^2V$ (or their duals). We also denote the volume form on
$H$ by $\Omega$ and the volume form on $TM$ as $dv =\Omega\wedge
\epsilon$. Using the volume form $\epsilon^{ijk}$ and the
metrics on $H$ and $V$, there is a convenient way to express
composition of the almost complex structures and contractions of
the volume form with itself,
\begin{gather*}\label{e1}
I\indices{_i}{^{\alpha}}{_{\gamma}}I\indices{_j}{^{\gamma}}{_{\beta}}=-g_{ij}\delta^{\alpha}_{\beta}+\epsilon_{ijk}I\indices{^{k\alpha}}{_{\beta}};\\\label{e2}
\epsilon_{ijk}\epsilon^{ilm}=\delta^l_j\delta^m_k-\delta^l_k\delta^m_j;
\quad \epsilon_{ijk}\epsilon^{ijl}=2\delta^l_k.
\end{gather*}

\subsection{Torsion and curvature} Since $T(X,Y)=-[X,Y]_{|V}\in V \quad\textnormal{for}\quad X,Y \in H$, we have
\[ T\indices{^i}{_{\alpha\beta}}=-\eta^i([\xi_{\alpha},\xi_{\beta}])=d\eta^i(\xi_{\alpha},\xi_{\beta})=2g(I^i\xi_{\alpha},\xi_{\beta})=-2I\indices{^i}{_{\alpha\beta}},\quad\textnormal{and}\quad T\indices{^{\alpha}}{_\beta\gamma}=0.\]

The properties of the Biquard connection are encoded in the
properties of the torsion endomorphism $T_{R }=T(R ,\cdot
):H\rightarrow H,\quad R \in V$. Decomposing the endomorphism
$T_{R }\in (sp(n)+sp(1))^{\perp }$ into its symmetric part
$T_{R}^{0}$ and skew-symmetric part $b_{R },T_{R }=T_{R}^{0}+b_{R
}$, O. Biquard shows in \cite{Biq1} that the torsion $T_{R}$ is
completely trace-free, $tr\,T_{R }=tr\,(T_{R}\circ I^i)=0$, its
symmetric part has the properties $T_{R
_{i}}^{0}I^{i}=-I^{i}T_{R_{i}}^{0}\quad I^{2}(T_{R
_{2}}^{0})^{+--}=I^{1}(T_{R _{1}}^{0})^{-+-},\quad I^{3}(T_{R
_{3}}^{0})^{-+-}=I^{2}(T_{R_{2}}^{0})^{--+},\quad I^{1}(T_{R
_{1}}^{0})^{--+}=I^{3}(T_{R _{3}}^{0})^{+--}$, where the
superscript $+++$
means commuting with all three $I^{i}$, $+--$ indicates commuting with $%
I^{1} $ and anti-commuting with the other two and etc. The
skew-symmetric part can be represented as $b_{R _{i}}=I^{i}\mu$,
where $\mu$ is a traceless symmetric (1,1)-tensor on $H$ which
commutes with $I^{1},I^{2},I^{3}$. Therefore we have
$T_{R_{i}}=T_{R_{i}}^{0}+I^{i}\mu$.  The symmetric, trace-free endomorphism on $H$ defined by
$\tau= (T_{R_1}^{0}I_1+T_{R_2}^{0}I_2+T_{ R_3}^{0}I_3)$ \cite{IMV}
determines completely the symmetric part
\cite[Proposition~2.3]{IV} $4T^0_{R_i}=I^i\tau-\tau I^i$. Thus
$\tau$  together with $\mu$ are the two $Sp(n)Sp(1)$-invariant
components of the torsion endomorphism. If $n=1$ then the tensor $\mu$
vanishes identically, $\mu=0$, and the torsion is a symmetric
tensor, $T_{R}=T_{R}^{0}$. Consider the Casimir
operator $\mathcal C=\sum_{i=1}^3I^i\otimes I^i$ one has $\mathcal{
C}^2=2\mathcal C +3$ and $\tau$ belongs to the eigenspace of
$\mathcal C$ corresponding to the eigenvalue $-1$ while $\mu$
belongs to the eigenspace of $\mathcal C$ corresponding to the
eigenvalue $3$, $\mathcal C\tau =-\tau, \quad \mathcal C\mu=3\mu$.

Further in the  paper we use the index convention. For example,
$\mathcal
C\indices{^{\alpha\gamma}}{_{\beta\delta}}=I\indices{_i}{^\alpha}{_\beta}I\indices{^{i\gamma}}{_\delta}$ and  \cite{Biq1}
\begin{equation}\label{torend}
T\indices{^i}{_{\alpha\beta}}=-2I\indices{^i}{_{\alpha\beta}},
\quad
T\indices{^\alpha}{_{i\alpha}}=0=T\indices{^\alpha}{_{i\beta}}I\indices{^\beta}{_\alpha},
\quad\textnormal{for any}\quad I\in \mathbb Q, \quad
I\indices{_i}{^\alpha}{_\gamma}\mu\indices{^\gamma}{_\beta}=\mu\indices{^\alpha}{_\gamma}I\indices{_i}{^\gamma}{_\beta}.
\end{equation}

We denote $R\indices{_{abc}}{^d}=\theta^d(R(\xi_a,\xi_b)\xi_c)$ for
the curvature tensor of the Biguard connection. The horizontal
Ricci tensor, called \emph{qc Ricci tensor}, is
$Ric=R_{\alpha\beta}=R\indices{_{\gamma\alpha\beta}}{^\gamma}$ and
the qc scalar curvature is $S=R\indices{_\alpha}{^\alpha}.$ There are nine Ricci type tensors obtained by certain contractions of the
curvature tensor against the almost complex structures  introduced in \cite[Definition~3.7, Definition~3.9]{IMV} by
\begin{equation}\label{CurvatureContractAlmost}
\rho_{iab}=\frac{1}{4n}R_{ab\alpha\beta}I\indices{_i^{\beta\alpha}},\quad \zeta_{iab}=\frac{1}{4n}R_{\alpha ab\beta}I\indices{_i^{\beta\alpha}},\quad \sigma_{iab}=\frac{1}{4n}R_{\alpha\beta ab}I\indices{_i^{\beta\alpha}}.
\end{equation}
The curvature tensor $R_{ab\alpha\beta}$ decomposes as
$R_{ab\alpha\beta}=\mathcal R_{ab\alpha\beta}+\rho_{iab}I\indices{^i}{_{\alpha\beta}}$, where $\mathcal R_{ab\alpha\beta}$
is the $\frak{sp}(n)$-component of $R_{ab\alpha\beta}$ and commutes with the almost complex structures in the second pair of indices \cite[Lemma~3.8]{IMV}.

In fact, according to \cite[Theorem~3.11]{IV} the whole curvature $R_{abcd}$ is completely determined by  its horizontal part $R_{\alpha\beta\gamma\delta}$,
the symmetric horizontal tensors $\tau_{\alpha\beta},  \mu_{\alpha\beta}$, the qc scalar curvature $S$ and their covariant derivatives up to the second order.

We collect below the necessary facts from \cite[Lemma~3.11, Theorem~3.12,
Corollary~3.14, Theorem~4.8]{IMV} and \cite[Proposition~2.3, Theorem~2.4]{IV}.
\begin{equation}\label{tor0}
\begin{aligned}
&T\indices{^\alpha}{_{i\beta}}=(T^0)\indices{_i}{^\alpha}{_\beta}+I\indices{_i}{^\alpha}{_\gamma}\mu\indices{^\gamma}{_\beta}=
\frac14\Big(\tau\indices{^\alpha}{_\gamma}I\indices{_i}{^\gamma}{_\beta}+I\indices{_i}{^\alpha}{_\gamma}\tau\indices{^\gamma}{_\beta}\Big)
+I\indices{_i}{^\alpha}{_\gamma}\mu\indices{^\gamma}{_\beta};\\
&T\indices{^\alpha}{_{ij}}=d\theta^{\alpha}(R_i,R_j); \quad
T\indices{^k}{_{ij}}=-\frac{S}{8n(n+2)}\epsilon\indices{_{ij}^k};
\quad T\indices{^\alpha}{_{\beta\gamma}}=0=
T\indices{^i}{_{j\alpha}};\\
&R_{\alpha\beta}=(2n+2)\tau_{\alpha\beta}+2(2n+5)\mu_{\alpha\beta}+\frac{S}{4n}g_{\alpha\beta};\\
&\rho_{i\alpha\beta}=\frac12\Big(\tau_{\alpha\gamma}I\indices{_i}{^\gamma}{_\beta} -\tau_{\gamma\beta}I\indices{_i}{^\gamma}{_\alpha}\Big)+2\mu_{\alpha\gamma}I\indices{_i}{^\gamma}{_\beta}-\frac{S}{8n(n+2)}I_{i\alpha\beta};\\
&\zeta_{i\alpha\beta}=-\frac{2n+1}{4n}\tau_{\alpha\gamma}I\indices{_i}{^\gamma}{_\beta} +\frac1{4n}\tau_{\gamma\beta}I\indices{_i}{^\gamma}{_\alpha}+\frac{2n+1}{2n}\mu_{\alpha\gamma}I\indices{_i}{^\gamma}{_\beta}+\frac{S}{16n(n+2)}I_{i\alpha\beta};\\
&\sigma_{i\alpha\beta}=\frac{n+2}{2n}\Big(\tau_{\alpha\gamma}I\indices{_i}{^\gamma}{_\beta} -\tau_{\gamma\beta}I\indices{_i}{^\gamma}{_\alpha}\Big)-\frac{S}{8n(n+2)}I_{i\alpha\beta};\\
&0=\tau\indices{_{\alpha\beta,}}{^\beta}-6\mu\indices{_{\alpha\beta,}}{^\beta}-\frac{4n-1}2\epsilon^{ijk}T\indices{^\beta}{_{jk}}I_{i\beta\alpha}
-\frac3{16n(n+2)}S\indices{_,}{_\alpha};\\
&0=\tau\indices{_{\alpha\beta,}}{^\beta}-\frac{n+2}2\epsilon^{ijk}T\indices{^\beta}{_{jk}}I_{i\beta\alpha}
-\frac3{16n(n+2)}S\indices{_,}{_\alpha};\\
&0=\tau\indices{_{\alpha\beta,}}{^\beta}-3\mu\indices{_{\alpha\beta,}}{^\beta}+2\epsilon^{ijk}T\indices{^\beta}{_{jk}}I_{i\beta\alpha}
-R\indices{_{\gamma
i\beta}}{^\gamma}I\indices{^{i\beta}}{_\alpha}.
\end{aligned}
\end{equation}

\subsection{Conformal change of the qc structure} In this section
we recall  the conformal change of a qc structure and list the
necessary facts from \cite{IMV}. Let $u$ be a smooth
function on a $(4n+3)$--dimensional qc manifold $(M,\eta,g,\mathbb Q)$.
Let $\tilde{\eta}^i=e^{2u}\eta^i, \quad \tilde{g}=e^{2u}g$ be the
conformal transformation of the qc structure $(\eta,g)$. Then
$d\tilde{\eta}^i=2e^{2u}du\wedge\eta^i+e^{2u}d\eta^i$
which restricted to $H$ gives
$$d\tilde{\eta}^i(X,Y)=e^{2u}d\eta^i(X,Y)=2e^{2u}g(I^iX,Y)=2\tilde{g}(I^iX,Y),\quad X,Y \in H.$$
The new Reeb vector fields  given by $\tilde{R}_i=e^{-2u}\Big(R_i-I\indices{_i}{^\alpha}{_\beta}u^{\beta}\xi_{\alpha}
\Big)$ determine the new globally defined supplementary space $\tilde{V}$. Note that nevertheless the one forms $\eta_i$ are local the qc conformal deformation has a global nature since the distributions $H,V$, the bundle $\mathbb{Q}$ and the horizontal metric $g$ are globally defined. We set $\tilde{\xi}_{\alpha}=\xi_{\alpha}$ and
$\tilde{\theta}^{\alpha}=\theta^{\alpha}+I\indices{_i}{^\alpha}{_\beta}u^{\beta}\eta^i$.
Then $\tilde{\theta}^{\alpha}(\tilde{R}_i)=0$ and
$\tilde{\eta}^i(\tilde{\xi}_{\alpha})=0$. Denote by $\mathbb P_{-1}$
and $\mathbb P_{3}$ the projection onto the $(-1)$-- and the
$3$--eigenspaces of the operator $\mathcal C$. Then the torsion and
the qc scalar curvature changes, \cite[(5.5),(5.6),(5.8)]{IMV},
with $h=\frac12e^{-2u}$, as follows
\begin{gather}\label{tau}
\tilde{\tau}_{\alpha\beta}=\tau_{\alpha\beta}+\mathbb
P_{-1}(4u_{\alpha}u_{\beta}-2u_{\alpha\beta});\quad %\\\label{tau}
\tilde{\mu}_{\alpha\beta}=\mu_{\alpha\beta}+\mathbb
P_{3}(-2u_{\alpha}u_{\beta}-u_{\alpha\beta});\\\label{scal}
\tilde{S}\tilde{g}_{\alpha\beta}=Sg_{\alpha\beta}-16(n+1)(n+2)u_{\gamma}u^{\gamma}g_{\alpha\beta}-8(n+2)u\indices{_\gamma}{^\gamma}g_{\alpha\beta}.
\end{gather}

\subsubsection{The qc conformal curvature tensor} In conformal
geometry, the obstruction to conformal flatness is the
well-studied Weyl tensor, the portion of the curvature tensor that
is invariant under a conformal change of the metric and its vanishing
determines if a conformal manifold is locally conformally
equivalent to the standard sphere. Likewise, in the CR case, the
tensor which determines local CR equivalence to the CR sphere is
the Chern-Moser tensor, also determined by the curvature of the
Tanaka-Webster connection and the Chern-Moser theorem \cite{ChM}
states that its vanishing is equivalent to the local CR
equivalence to the CR sphere. And just as in the conformal case,
it is the key to finding the appropriate bound for the CR Yamabe
invariant on a CR manifold. Something similar appears in the qc
case, dubbed the quaternionic contact conformal curvature by
 Ivanov and  Vassilev in \cite{IV}. In that paper
they define a tensor $W^{qc}$ and prove that it is the conformally
invariant portion of the Biquard curvature tensor. Moreover, if
the tensor $W^{qc}$ vanishes, they prove that the qc manifold is
locally qc equivalent to the quaternionic Heisenberg group and
since the quaternionic Heisenberg group and the standard
3-Sasakian sphere are locally qc equivalent
\cite[Theorem~1.1,Theorem~1.2,Corollary~1.3]{IV}, this tensor
clearly plays the role of the Weyl or Chern-Moser tensors.

The qc conformal curvature is determined by the horizontal
curvature and the torsion of the Biguard connection as follows
\cite[(4.8)]{IV}
\begin{multline}\label{qc-tensor}
W^{qc}_{\alpha\beta\gamma\delta}=R_{\alpha\beta\gamma\delta}+g_{\alpha\gamma}L_{\beta\delta}-g_{\alpha\delta}L_{\beta\gamma}+g_{\beta\delta}L_{\alpha\gamma}-
g_{\beta\gamma}L_{\alpha\delta}\\+I_{i\alpha\gamma}L_{\beta\rho}I\indices{^{i\rho}}{_\delta}-I_{i\alpha\delta}L_{\beta\rho}I\indices{^{i\rho}}{_\gamma}+
I_{i\beta\delta}L_{\alpha\rho}I\indices{^{i\rho}}{_\gamma}-I_{i\beta\gamma}L_{\alpha\rho}I\indices{^{i\rho}}{_\delta}\\+
\frac12\Big(I_{i\alpha\beta}L_{\gamma\rho}I\indices{^{i\rho}}{_\delta}-I_{i\alpha\beta}L_{\rho\delta}I\indices{^{i\rho}}{_\gamma}+
I_{i\alpha\beta}L_{\rho\sigma}I\indices{_j}{^\rho}{_\gamma}I\indices{_k}{^\sigma}{_\delta}\epsilon^{ijk}
\Big)\\+I_{i\gamma\delta}L_{\alpha\rho}I\indices{^{i\rho}}{_\beta}-I_{i\gamma\delta}L_{\rho\beta}I\indices{^{i\rho}}{_\alpha}+
\frac1{2n}L\indices{_\rho}{^\rho}I_{i\alpha\beta}I\indices{^i}{_{\gamma\delta}},
\end{multline}
where the tensor $L_{\alpha\beta}$ is given by \cite[(4.6)]{IV}
\begin{equation}\label{tenL}
L_{\alpha\beta}=\frac12\tau_{\alpha\beta}+\mu_{\alpha\beta}+\frac{S}{32n(n+2)}g_{\alpha\beta}.
\end{equation}
Clearly the qc conformal curvature equals the horizontal
curvature  precisely when the tensor $L$ vanishes.

\subsection{The flat model--the quaternionic Heisenberg group}\label{Heis}

{The basic  example of a qc manifold is provided by the
quaternionic Heisenberg group $\QH$}  on which we introduce
coordinates by regarding
$\QH=\mathbb{H}^n\times Im(\mathbb{H})$, $\quad (q,\omega)\in
\QH$ so that the multiplication takes the form
$(q_0,w_0)\circ(q,w)=(q_0+q,w_0+w+2Im(q_0.\bar{q}))$.
Using real coordinates $(x^{\alpha}, t^i)$, the "standard" left invariant  qc contact form

\begin{equation}\label{hforms}
\Theta^i=\frac12dt^i-I\indices{^i}{_\alpha}{_\beta}x^{\alpha}dx^{\beta}.
\end{equation}
The dual left-invariant vertical Reeb  fields $T_1,T_2,T_3$ are
$%\[%\begin{equation}\sideremark{gHv}
T_1=2\frac{\partial}{\partial t^1}, \quad T_2=2\frac{\partial}{\partial t^2},\quad T_3=2\frac{\partial}{\partial t^3}.
$%\]%\end{equation}

The left-invariant horizontal 1-forms and their dual vector fields are given by
\begin{equation}\label{invfv}
\Xi^{\alpha}=dx^{\alpha}, \qquad X_{\alpha}=\frac{\partial}{\partial x^{\alpha}}+2I\indices{^i}{_\beta}{_\alpha}x^{\beta}\frac{\partial}{\partial t^i}.
\end{equation}

The horizontal and vertical subbundles of $T\QH$ are given by $\frak h=Span\{X_{\alpha}\},\quad \frak v=Span\{T_1,T_2,T_3\}$ and $T\QH=\frak h\oplus \frak v$.
On $\QH$, the left-invariant flat connection  is the Biquard
connection, hence $\QH$ is a flat qc structure. It should be noted
that the latter property characterizes (locally) the qc structure
$\Theta$ by \cite[Proposition 4.11]{IMV}, but in fact
vanishing of the curvature on the horizontal space is enogh because of
%\cite[Propositon 3.2]{IV}.
%Theorem \ref{hflat} whose proof was started in \cite{IMV} and completed in \cite{IV}.
%\begin{thrm}\label{hflat}
%Thus, by
\cite[Propositon 3.2]{IV}.
% which states that a quaternionic contact
%manifold is locally isomorphic to the quaternionic Heisenberg
%group exactly when the curvature of the Biquard connection
%restricted to $H$ vanishes, $R_{|_H}=0$.

\section{QC parabolic normal coordinates}\label{qcnormalcoord}

In this Section we present the necessary facts from Kunkel's work \cite{Kunk} concerning construction of qc parabolic normal coordinates and its consequences.

The quaternionic Heisenberg group $\QH$ has a family of parabolic
dilations $(x,t)\rightarrow (sx,s^2t)$ which are automorphisms for
the Lie group $\QH$ and also for its Lie algebra. Then its tangent
space $T\QH=\frak h\oplus \frak v$  come equipped with a natural
parabolic dilation which sends a vector $(v,a)$ to $(sv,s^2a)$,
for any scalar $s$. Consider these vectors to be based at the
origin $o\in\QH$, then by moving from $o$ in the direction of the
vector $(v,a)$ for time $s$ we arrive to the parametrization of a
parabola, $s\rightarrow (sv,s^2a)$. The  parabola has a simple
expression in terms of differential equations as
$\dddot{\gamma}_{(v,a)}=0, \quad \gamma_{(v,a)}(0)=0, \quad
\dot{\gamma}_{(v,a)}(0)=v, \quad \ddot{\gamma}_{(v,a)}(0)=a.$

Extending this notion to a qc manifold with the Biquard connection produces curves that
can rightly be called parabolic geodesics, i.e. that satisfy a natural parabolic scaling
$\gamma_{(sv,s^2a)}(t) = \gamma_{(v,a)}(st).$
By appropriately restricting the initial conditions, Kunkel  showed in \cite{Kunk}  that there is a
parabolic version of the geodesic exponential map called the parabolic exponential
map. Here we state Kunkel's result for a qc manifold.
\begin{thrm}\label{parexp}[\cite{Kunk} Theorem~3.1.]
Let $(M,g,\mathbb Q)$ be a qc manifold with the Biquard connection $\nabla$ and the decomposition $TM=H\oplus V$.
Choose any $q\in M$ and $ (X, Y )\in  H_q\oplus V_q = T_qM$ be any tangent vector. Define $\gamma_{(X,Y)}$ to be the
curve beginning at $q$ satisfying
$$\nabla^2_t\dot{\gamma}_{(X,Y)}=0, \quad \gamma_{(X,Y)}(0)=q, \quad \dot{\gamma}_{(X,Y)}(0)=X, \quad \nabla_t\dot{\gamma}_{(X,Y)}(0)=Y.
$$
Then there are neighborhoods $0\in O\subset T_qM$ and $q \in O_M \subset  M$ so that the function $\Psi : O \rightarrow O_M : (X, Y ) \rightarrow \gamma_{(X,Y)}(1)$
is a diffeomorphism, and satisfies the parabolic scaling $\Psi(tX, t^2Y ) = \gamma_{(X,Y )}(t)$ wherever either side is defined.
\end{thrm}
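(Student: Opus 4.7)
The plan is to prove the three assertions of the statement in turn: smoothness of $\Psi$ via standard ODE theory, the parabolic scaling via uniqueness, and the local diffeomorphism property via the inverse function theorem after computing $d\Psi_0$.

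First, in local coordinates $(y^a)$ around $q$, with $\Gamma^a_{bc}(y)$ denoting the Christoffel symbols of the Biquard connection, the condition $\nabla_t\dot\gamma(0)=Y$ is equivalent to $\ddot\gamma^a(0) = Y^a - \Gamma^a_{bc}(q)X^bX^c$, and $\nabla_t^2\dot\gamma=0$ becomes a third-order ODE in $\gamma^a(t)$ with a complete set of initial conditions. Standard existence, uniqueness, and smooth dependence on initial data (after rewriting as a first-order system on an iterated tangent bundle) then yield a unique smooth solution $\gamma_{(X,Y)}$ on a time interval whose length is bounded below uniformly for $(X,Y)$ in a small neighborhood of the origin in $T_qM$; in particular, $\Psi(X,Y)=\gamma_{(X,Y)}(1)$ is smooth on such a neighborhood. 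For the parabolic scaling, I would fix $s\in\R$ and set $\tilde\gamma(t):=\gamma_{(X,Y)}(st)$. Differentiating, $\nabla_t^2\dot{\tilde\gamma}(t) = s^3(\nabla_t^2\dot\gamma_{(X,Y)})(st)=0$, with initial data $\tilde\gamma(0)=q$, $\dot{\tilde\gamma}(0)=sX$, $\nabla_t\dot{\tilde\gamma}(0)=s^2Y$, so uniqueness gives $\tilde\gamma(t) = \gamma_{(sX,s^2Y)}(t)$, which at $t=1$ is exactly $\Psi(sX,s^2Y) = \gamma_{(X,Y)}(s)$.

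For the local diffeomorphism statement, I would compute $d\Psi_0\colon T_0(T_qM) \cong H_q\oplus V_q \to T_qM$ along two natural families of curves. Along $\epsilon\mapsto(\epsilon X,0)$ the parabolic scaling gives $\Psi(\epsilon X,0) = \gamma_{(X,0)}(\epsilon)$, whose $\epsilon$-derivative at $0$ is $\dot\gamma_{(X,0)}(0)=X$. Along $\epsilon\mapsto(0,\epsilon Y)$ (taking $\epsilon\geq 0$) the scaling gives $\Psi(0,\epsilon Y) = \gamma_{(0,Y)}(\sqrt{\epsilon})$; a Taylor expansion of $\gamma_{(0,Y)}$ in the chart, using $\dot\gamma_{(0,Y)}(0)=0$, yields $\gamma_{(0,Y)}(t) = q + \tfrac12 Y t^2 + O(t^3)$, so $\Psi(0,\epsilon Y) = q + \tfrac12 Y \epsilon + O(\epsilon^{3/2})$ and the $\epsilon$-derivative at $0$ equals $\tfrac12 Y$. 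Combining, $d\Psi_0(X,Y) = X\oplus \tfrac12 Y$, which is a linear isomorphism of $T_qM$, and the inverse function theorem produces the desired neighborhoods $O$ and $O_M$.

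The one subtle step is the computation of $d\Psi_0$ on the vertical component: the apparent $\sqrt{\epsilon}$ singularity in $\Psi(0,\epsilon Y) = \gamma_{(0,Y)}(\sqrt\epsilon)$ must be handled via the Taylor expansion rather than by a naive chain rule. This works precisely because $\dot\gamma_{(0,Y)}(0)=0$, so the coordinate representation of $\gamma_{(0,Y)}(t)$ has no linear term and the substitution $t=\sqrt\epsilon$ produces a function that is smooth in $\epsilon$ at the origin. Apart from this point, the entire argument is a direct application of ODE theory plus the inverse function theorem, following essentially the standard template for smoothness of the Riemannian exponential map, adapted to the anisotropic parabolic scaling dictated by the qc decomposition $TM = H\oplus V$.
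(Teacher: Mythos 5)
The paper does not prove this statement at all; it is quoted verbatim as Theorem~3.1 of Kunkel \cite{Kunk}, so there is no in-paper argument to compare against. Your reconstruction is correct and is the standard one: the third-order ODE with smooth dependence on initial data gives smoothness of $\Psi$, the scaling identity follows from uniqueness applied to $t\mapsto\gamma(st)$, and you correctly handle the only delicate point, namely that $d\Psi_0(0,Y)=\tfrac12 Y$ must be extracted from the quadratic Taylor coefficient of $\gamma_{(0,Y)}$ rather than a chain rule through $t=\sqrt{\epsilon}$, after which $d\Psi_0 = \mathrm{id}_{H_q}\oplus\tfrac12\,\mathrm{id}_{V_q}$ is invertible and the inverse function theorem finishes the proof.
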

Theorem~\ref{parexp} supplies a special frame and co-frame as
follows. Let $\{R_i\}$ be an oriented orthonormal frame for $V_q$,
and let $\{I_i\}$ be the associated almost complex structures.
Choose an orthonormal basis $\{\xi_{\alpha}\}$ for $H_q$ so that
$\xi_{4k+i+1} = I_i\xi_{4k+1}$ for $ k = 0, . . . , n-1$.
Extending these vectors to be parallel along parabolic geodesics
beginning at $q$, one obtains  a smooth local frame for $TM = H
\oplus V$. Define the dual 1-forms $\{\theta^{\alpha},\eta^i\}$ by
$\theta^{\alpha}(\xi_{\beta})=\delta^{\alpha}_{\beta},\quad\theta^{\alpha}(R_i)
= 0, \quad \eta^i(\xi_{\alpha})=0, \quad \eta^i(R_j) = \delta^i_j$
and  extending the almost complex structures by defining
$I_i\xi_{4k+1} = \xi_{4k+i+1}$ for  $k=0,\ldots,n-1,$ one gets the
Kunkel's special frame and co-frame.

Given any special frame, one defines a coordinate map on a neighborhood
of $q$ by composing the inverse of $\Psi$ with the map $\lambda : T_qM \rightarrow \mathbb{R}^{4n+3 }: X \rightarrow (x^{\alpha}, t^i) = (\theta^{\alpha}(X), \eta^i(X))$.
These coordinates are the Kunkel's  parabolic normal coordinates (called qc pseudohermitian normal coordinates in \cite{Kunk}).

The generator  of the parabolic dilations on the quaternionic Heisenberg group
$$\delta_s : (x, t)\rightarrow  (sx, s^2t) \quad \textnormal{is  the vector field} \quad P = x^{\alpha}\frac{\partial}{\partial x^{\alpha}}+2t^i\frac{\partial}{\partial t^i}.$$
A tensor field $\phi$ on $\QH$  is said to be  homogeneous of order $m$ if $\mathcal L_P\phi=m\phi$, where $\mathcal L$ is the Lie derivative.
For an arbitrary tensor field $\phi$ the symbol $\phi_{(m)}$ denotes  the part of  $\phi$ that is homogeneous of order $m$.

Given a qc manifold and parabolic normal  coordinates  centered at a
point $q\in M$ Kunkel defined the infinitesimal generator of the parabolic dilations, the vector $P$, in these coordinates and shows that it is given by [\cite{Kunk}, Lemma~3.4]
\begin{equation}\label{vecP}
P=x^{\alpha}\xi_{\alpha}+t^iR_i. \quad \textnormal{In  particular,}\quad \theta^{\alpha}(P)=x^{\alpha}, \quad \eta^i(P)=t^i,\quad \omega\indices{_a}{^b}(P)=0.
\end{equation}
Using  this result Kunkel calculated  the low order homogeneous
terms of the special co-frame and the connection 1-forms, namely
he proves
%We need the next
\begin{prop}\label{recurconn}[\cite{Kunk} Proposition~3.5.] In parabolic normal coordinates, the low order homogeneous terms of the special co-frame and the connection $1$-forms are
\begin{enumerate}[a)]
\vspace{0.2cm}
\item $\eta^i_{(2)}=\frac{1}{2}dt^i-I\indices{^i_{\alpha\beta}}x^{\alpha}dx^\beta;\quad \eta^i_{(3)}=0;\quad \eta^i_{(m)}=\frac{1}{m}(t^j\omega\indices{_j^i}+T\indices{^i_{jk}}t^j\eta^k-2I\indices{^i_{\alpha\beta}}x^{\alpha}\theta^{\beta})_{(m)},\quad m\geq4;$
\vspace{0.2cm}
\item  $\theta^{\alpha}_{(1)}=dx^{\alpha};\quad\theta^{\alpha}_{(2)}=0;\quad\theta^{\alpha}_{(m)}=\frac{1}{m}(x^{\beta}\omega\indices{_{\beta}^{\alpha}}-T\indices{^\alpha_{i\gamma}}x^{\gamma}\eta^i+T\indices{^{\alpha}_{i\beta}}t^i\theta^{\beta}+T\indices{^{\alpha}_{ij}}t^i\eta^j)_{(m)},\quad m\geq3;$
\vspace{0.2cm}
\item $\omega_{a(1)}^{\phantom{c}b}=0;\quad\omega_{a(m)}^{\phantom{c}b}=\frac{1}{m}(R\indices{_{\alpha\beta a}^b}x^{\alpha}\theta^{\beta}+R\indices{_{\alpha ja}^b}x^{\alpha}\eta^j-R\indices{_{\alpha ja}^b}t^j\theta^{\alpha}+R\indices{_{ija}^b}t^i\eta^j)_{(m)},\quad m\geq2.$
\end{enumerate}
\end{prop}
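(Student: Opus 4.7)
The strategy is to apply Cartan's magic formula $\mathcal L_P = i_P\circ d + d\circ i_P$ to each of $\eta^i$, $\theta^\alpha$, $\omega\indices{_a}{^b}$, and to extract the weight-$m$ homogeneous component via the defining relation $\mathcal L_P\phi_{(m)} = m\,\phi_{(m)}$. All three claims in (a)--(c) will drop out of this single observation.

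I begin by rewriting Biquard's first and second structure equations. Using the torsion identities \eqref{torend} ($T\indices{^i}{_{\alpha\beta}} = -2I\indices{^i}{_{\alpha\beta}}$, $T\indices{^\alpha}{_{\beta\gamma}} = 0$, $T\indices{^i}{_{j\alpha}} = 0$) together with the vanishings $\omega\indices{_\alpha}{^i} = \omega\indices{_i}{^\alpha} = 0$ that follow from the Biquard connection preserving $H\oplus V$, the structure equations take the form
\begin{align*}
d\eta^i &= -\omega\indices{_j}{^i}\wedge\eta^j - I\indices{^i}{_{\alpha\beta}}\,\theta^\alpha\wedge\theta^\beta + \tfrac12 T\indices{^i}{_{jk}}\,\eta^j\wedge\eta^k,\\
d\theta^\alpha &= -\omega\indices{_\beta}{^\alpha}\wedge\theta^\beta + T\indices{^\alpha}{_{i\beta}}\,\eta^i\wedge\theta^\beta + \tfrac12 T\indices{^\alpha}{_{ij}}\,\eta^i\wedge\eta^j,\\
d\omega\indices{_a}{^b} &= -\omega\indices{_c}{^b}\wedge\omega\indices{_a}{^c} + \tfrac12 R\indices{_{cda}^b}\,\theta^c\wedge\theta^d.
\end{align*}

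Contracting with $P$ and invoking \eqref{vecP}---namely $\theta^\alpha(P) = x^\alpha$, $\eta^i(P) = t^i$, $\omega\indices{_a}{^b}(P) = 0$---the quadratic $\omega\wedge\omega$ term drops out entirely under $i_P$, and a direct expansion combined with $d(i_P\eta^i) = dt^i$, $d(i_P\theta^\alpha) = dx^\alpha$, $d(i_P\omega\indices{_a}{^b}) = 0$ yields
\begin{align*}
\mathcal L_P\eta^i &= dt^i + t^j\omega\indices{_j}{^i} + T\indices{^i}{_{jk}}\,t^j\eta^k - 2I\indices{^i}{_{\alpha\beta}}\,x^\alpha\theta^\beta,\\
\mathcal L_P\theta^\alpha &= dx^\alpha + x^\beta\omega\indices{_\beta}{^\alpha} - T\indices{^\alpha}{_{i\gamma}}\,x^\gamma\eta^i + T\indices{^\alpha}{_{i\beta}}\,t^i\theta^\beta + T\indices{^\alpha}{_{ij}}\,t^i\eta^j,\\
\mathcal L_P\omega\indices{_a}{^b} &= R\indices{_{\alpha\beta a}^b}\,x^\alpha\theta^\beta + R\indices{_{\alpha j a}^b}\,x^\alpha\eta^j - R\indices{_{\alpha j a}^b}\,t^j\theta^\alpha + R\indices{_{ija}^b}\,t^i\eta^j.
\end{align*}
Projecting onto the weight-$m$ part and dividing by $m$ reproduces the recursions of (a), (b), (c) verbatim in the ranges $m\geq 4$, $m\geq 3$, $m\geq 2$.

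The remaining boundary cases follow by pure weight counting: $x^\alpha$ and $dx^\alpha$ carry weight $1$; $t^i$ and $dt^i$ carry weight $2$; the lowest-weight component of $\theta^\alpha$ is of weight $1$; and $\eta^i$ together with $\omega\indices{_a}{^b}$ have no components of weight below $2$. This instantly gives $\theta^\alpha_{(1)} = dx^\alpha$, $\theta^\alpha_{(2)} = 0$, $\eta^i_{(2)} = \tfrac12 dt^i - I\indices{^i}{_{\alpha\beta}}\,x^\alpha dx^\beta$, $\eta^i_{(3)} = 0$, and $(\omega\indices{_a}{^b})_{(1)} = 0$. The entire argument is essentially mechanical; the only real pitfall is bookkeeping---keeping the antisymmetrization factors, torsion signs, and the vanishings of the mixed connection forms $\omega\indices{_\alpha}{^i}$ straight throughout---but no genuine conceptual obstacle arises.
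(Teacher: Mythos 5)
Your proposal is correct; note that the paper itself gives no proof of this statement but simply quotes it from Kunkel (\cite{Kunk}, Proposition~3.5), and Kunkel's own argument is exactly the one you give: apply $\mathcal L_P=i_P\circ d+d\circ i_P$ to the structure equations, use $\theta^\alpha(P)=x^\alpha$, $\eta^i(P)=t^i$, $\omega\indices{_a}{^b}(P)=0$ to kill the quadratic term, and read off the order-$m$ part from $\mathcal L_P\phi_{(m)}=m\phi_{(m)}$. The only point worth tightening is the boundary cases: rather than asserting a priori that $\eta^i$ and $\omega\indices{_a}{^b}$ have no components of weight below $2$, observe that any smooth $1$-form lies in $\mathcal O_{(1)}$ and any smooth function in $\mathcal O_{(0)}$, so the same three identities evaluated at orders $1$, $2$, $3$ yield $\eta^i_{(1)}=0$, $\omega_{a(1)}^{\phantom{c}b}=0$, $\theta^\alpha_{(1)}=dx^\alpha$, $\theta^\alpha_{(2)}=0$, $\eta^i_{(2)}=\frac12 dt^i-I\indices{^i_{\alpha\beta}}x^\alpha dx^\beta$ and $\eta^i_{(3)}=0$ self-consistently, with no extra input.
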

Following \cite{Kunk}, we denote by $\mathcal O_{(m)}$ those tensor fields whose Taylor expansions at $q$ contain
only terms of order greater than or equal to $m$. For example, from Proposition~\ref{recurconn}, $\eta^i\in\mathcal O_{(2)}$ and $\theta^{\alpha}\in \mathcal O_{(1)}.$
%Another fact that we take from \cite{Kunk} is the following
\begin{prop}\label{Leftinv}[\cite{Kunk} Corollary~3.6.] If we define $X_\alpha=\partial_\alpha+2I\indices{^i_{\beta\alpha}}x^{\beta}\partial_i$ and $T_i=2\partial_i$, then $\xi_\alpha=X_\alpha+\mathcal{O}_{(1)}$ and $R_i=T_i+\mathcal{O}_{(0)}.$
\end{prop}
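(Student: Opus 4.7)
The plan is to dualize the co-frame expansions from Proposition~\ref{recurconn}. Parts (a) and (b), together with the vanishing of $\theta^\alpha_{(2)}$ and $\eta^i_{(3)}$, give
\[
\theta^\alpha = dx^\alpha + \mathcal{O}_{(3)}, \qquad \eta^i = \tfrac{1}{2}dt^i - I\indices{^i_{\gamma\beta}}x^\gamma\, dx^\beta + \mathcal{O}_{(4)}.
\]
Writing the special co-frame in terms of the coordinate co-frame $(dx^\beta, dt^j)$ produces a $(4n+3)\times(4n+3)$ transition matrix $M = M_0 + E$, where
\[
M_0 = \begin{pmatrix} \delta^\alpha_\beta & 0 \\ -I\indices{^i_{\gamma\beta}}x^\gamma & \tfrac{1}{2}\delta^i_j \end{pmatrix}.
\]
Assigning parabolic weight $1$ to $x^\gamma$ and $2$ to $t^k$, so that $dx^\gamma$ has weight $1$ and $dt^k$ weight $2$, the four blocks of $E$ have coefficient Taylor weights at least $2$, $1$, $3$, $2$ respectively.

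The next step is to invert. Since $M_0$ is block lower-triangular, a direct computation gives
\[
M_0^{-1} = \begin{pmatrix} \delta^\beta_\alpha & 0 \\ 2\,I\indices{^j_{\gamma\alpha}}x^\gamma & 2\,\delta^j_i \end{pmatrix}.
\]
Reading off columns of $M_0^{-1}$, the $\beta$-th column is $\partial_\beta + 2I\indices{^j_{\gamma\beta}}x^\gamma\partial_j = X_\beta$, and the $j$-th column is $2\partial_j = T_j$. Hence the leading-order dual basis is exactly the left-invariant frame on $\QH$, as should be expected from the normalization of the parabolic normal coordinates.

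To control the error, expand geometrically,
\[
M^{-1} - M_0^{-1} = -M_0^{-1}E M_0^{-1} + (M_0^{-1}E)^2 M_0^{-1} - \cdots,
\]
a series that truncates at any prescribed Taylor weight because every entry of $M_0^{-1}E$ has strictly positive weight. A direct tally, using that $M_0^{-1}$ contributes a factor of weight $0$ or $1$ in each block, shows that the blocks of $M^{-1} - M_0^{-1}$ retain the coefficient-weight bounds $2$, $1$, $3$, $2$. Assigning the dual parabolic weights $\mathrm{wt}(\partial_\gamma) = -1$ and $\mathrm{wt}(\partial_j) = -2$, the correction to $\xi_\alpha$ is a sum of terms of parabolic order at least $2-1$ and $3-2$, hence lies in $\mathcal{O}_{(1)}$; the correction to $R_j$ is of parabolic order at least $1-1$ and $2-2$, hence lies in $\mathcal{O}_{(0)}$. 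This proves both statements.

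The main obstacle is not analytic but notational: one must consistently carry the parabolic weights through coordinate functions, coordinate differentials, and coordinate vector fields, and then track those weights through the perturbation expansion of the inverse matrix. Once this bookkeeping is set up, the result is a mechanical consequence of Proposition~\ref{recurconn}.
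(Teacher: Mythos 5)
Your argument is correct. The paper itself states this proposition without proof, importing it as Corollary~3.6 of \cite{Kunk}; your derivation — dualizing the co-frame expansions of Proposition~\ref{recurconn}, inverting the block lower-triangular leading transition matrix $M_0$ to recover the left-invariant frame $X_\alpha$, $T_i$, and then propagating the parabolic coefficient-weight profile $(2,1;3,2)$ of the error through the Neumann series for $M^{-1}$ — is the natural route and the weight bookkeeping checks out, yielding corrections in $\mathcal{O}_{(1)}$ for $\xi_\alpha$ and $\mathcal{O}_{(0)}$ for $R_i$ exactly as claimed.
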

The vector fields $\{X_1,\ldots,X_{4n},T_1,T_2,T_3\}=\{X_a\}_{a=1}^{4n+3}$ form the standard left-invariant frame on  $\mathbf{G}(\mathbb{H})$ defined in Subsection \ref{Heis} and the given frame on M is expressed as a
perturbation of them.
%The (dual) co-frame consists of the $1-$forms $\{\Xi^1,\ldots,\Xi^{4n},\Theta^1,\Theta^2,\Theta^3\}=\{\Xi^a\}_{a=1}^{4n+3}$, where $\Xi^{\alpha}=dx^{\alpha}$ and %$\Theta^i=\frac{1}{2}dt^i-I\indices{^{i}_{\alpha\beta}}x^\alpha dx^{\beta}.$

It is easy to check that if $\phi\in\mathcal O_{(m)}$ and $\psi\in\mathcal O_{(m')}$ then $\phi\otimes\psi\in \mathcal O_{(m+m')}$ and  we will use  the following: for any index $a$, let $ o(a) = 1$ if
$a\le 4n$ and $ o(a) = 2$ if $a > 4n$. Given a multiindex $A = (a_1, \dots,a_r)$ we let $\#A = r$ and $o(A) =\sum_{s=1}^ro(a_s)$. If we have a collection of indexed vector fields, $X_a$, we let $X_A = X_{a_r} . . .X_{a_1}$, and similarly for similar expressions.

The next facts we need from \cite{Kunk} are
\begin{prop}\label{Taylor}[\cite{Kunk} Lemma~3.8.] Let $F$ be a smooth function defined near $q\in M$. Then in parabolic normal coordinates, for any non-negative integer $m,$
\begin{equation*}
F_{(m)}=\sum_{o(A)=m}\frac{1}{(\#A)!}\Big(\frac{1}{2}\Big)^{o(A)-\#A}x^A(X_AF)|_q.
\end{equation*}
\end{prop}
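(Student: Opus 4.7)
The formula is essentially a Taylor expansion identity, and the key conceptual point is that the vector fields $X_\alpha, T_i$ on the right--hand side depend only on the parabolic normal coordinates, not on the ambient qc structure of $M$: by Proposition~\ref{Leftinv} they are precisely the standard left--invariant vector fields on the flat model $\QH$. Therefore the identity reduces to a coordinate statement on $\mathbb{R}^{4n+3}$ equipped with its Heisenberg group structure, with the qc data of $M$ entering only through the smooth function $F$.

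The plan is, first, to recognize $(x^\alpha, t^i)$ as first--kind exponential coordinates for the group $\QH$. Set $Z = x^\alpha X_\alpha + \frac{1}{2} t^i T_i$, viewed as a fixed left--invariant vector field on $\QH$ (with the $(x^\alpha, t^i)$ playing the role of scalar constants). Expanding,
\begin{equation*}
Z\big|_{(p,\tau)} = x^\alpha \partial_\alpha + \bigl(2 I^i_{\beta\alpha} x^\alpha p^\beta + t^i\bigr)\partial_i,
\end{equation*}
so the integral curve of $Z$ starting at the origin solves $\dot p^\alpha(r) = x^\alpha$ and $\dot \tau^i(r) = 2 I^i_{\beta\alpha} x^\alpha p^\beta(r) + t^i$. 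Substituting $p^\beta(r) = r x^\beta$, the quadratic cross--term vanishes by antisymmetry of $I^i_{\beta\alpha}$ in $\alpha, \beta$, giving $\tau^i(r) = r t^i$ and hence $\exp(Z)\cdot 0 = (x, t)$.

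Second, invoke the standard Lie--group Taylor formula: since $Z$ is left--invariant, $\frac{d^k}{dr^k}\big|_{r=0} F(\exp(rZ)\cdot 0) = (Z^k F)(0)$, and evaluating the Taylor series of $r \mapsto F(\exp(rZ) \cdot 0)$ at $r = 1$ yields the formal power series identity $F(x, t) = \sum_{k \ge 0} \frac{1}{k!} (Z^k F)(0)$. Expanding
\begin{equation*}
Z^k = \Bigl(x^\alpha X_\alpha + \frac{t^i}{2} T_i\Bigr)^k = \sum_{A:\ \#A = k} \Bigl(\frac{1}{2}\Bigr)^{q(A)} x^A X_A
\end{equation*}
(using the convention $X_A = X_{a_r}\cdots X_{a_1}$, with $q(A)$ the number of vertical entries in $A$, so that $q(A) = o(A) - \#A$), substituting, and collecting weight-$m$ homogeneous terms---observing that $x^A$ carries parabolic weight exactly $o(A)$---produces the stated formula.

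The main subtlety is that the Lie--group Taylor series need not converge for merely smooth $F$; however, for fixed $m$ the right--hand side depends only on derivatives of $F$ at $q$ of order at most $m$, so the identity is a finite--dimensional algebraic matching valid modulo $\mathcal{O}_{(m+1)}$, and no convergence is needed. This modest bookkeeping is the only obstacle.
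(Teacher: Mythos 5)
The paper does not prove this proposition at all --- it is imported verbatim from Kunkel \cite{Kunk} (Lemma~3.8), so there is no in-paper argument to compare against. Your proof is correct and self-contained. The reduction to a pure coordinate identity on $\mathbb{R}^{4n+3}$ is the right first move (the qc structure of $M$ enters only through the function $F$, since the $X_a$ are the explicit model fields of Proposition~\ref{Leftinv}), the computation showing that the integral curve of $Z=x^\alpha X_\alpha+\tfrac12 t^iT_i$ through the origin is the straight line $r\mapsto(rx,rt)$ is right (the cross term dies by antisymmetry of $I\indices{^i_{\beta\alpha}}$), and the bookkeeping $o(A)-\#A=q(A)$ together with the observation that $x^A$ has parabolic weight $o(A)$ gives exactly the stated coefficients; I checked the $m=2$ case explicitly and it reproduces $t^i\partial_iF(0)+\tfrac12x^\alpha x^\beta\partial_\alpha\partial_\beta F(0)$ as it must. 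Your handling of convergence is also adequate: both sides of the weight-$m$ identity depend only on the $m$-jet of $F$ at $q$, so it suffices to verify the identity for polynomials, where the series terminates. For what it is worth, the proof in \cite{Kunk} (following Jerison--Lee) instead differentiates $F$ along the parabolic curve $s\mapsto(sx,s^2t)$, obtaining $\frac{d}{ds}F(sx,s^2t)=(x^\alpha X_\alpha+st^iT_i)F(sx,s^2t)$ and iterating; your route through a single left-invariant field and the standard Lie-group Taylor formula at $r=1$ is a clean alternative that trades the $s$-dependent operator for a fixed one, and the two give the same coefficients. The only cosmetic imprecision is the phrase ``valid modulo $\mathcal{O}_{(m+1)}$'': the weight-$m$ identity is exact, not merely a congruence; but this does not affect the argument.
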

\begin{lemma}\label{derK}[\cite{Kunk} Lemma~3.9]
If $\phi$ is a tensor in $\mathcal O_{(m)}$, the components of its covariant derivatives in terms of a special frame satisfy
$\quad \phi_{A,B}=X_{B}\phi_A+\mathcal O_{(m-o(AB)+2)}.$
\end{lemma}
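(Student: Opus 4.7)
The plan is to proceed by induction on the length $\#B$ of the multiindex $B$. The base case $\#B=0$ is vacuous, as the stated identity reduces to $\phi_A = \phi_A$. For the inductive step, split off the last index: write $B = (B',b)$ so that $\phi_{A,B} = (\phi_{A,B'})_{,b}$ and expand by the standard frame formula for covariant derivatives,
\[
\phi_{A,B} \;=\; \xi_b(\phi_{A,B'}) \;-\; \sum_{k}\omega\indices{_{a_k}}{^c}(\xi_b)\,\phi_{(A|c/a_k),\,B'} \;-\; \sum_{j}\omega\indices{_{b_j}}{^c}(\xi_b)\,\phi_{A,\,(B'|c/b_j)},
\]
where $A|c/a_k$ denotes the multiindex obtained by replacing $a_k$ with $c$. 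The three pieces are then treated separately, invoking Proposition~\ref{recurconn} (the connection $1$-forms lie in $\mathcal O_{(2)}$), Proposition~\ref{Leftinv} ($\xi_\alpha = X_\alpha + \mathcal O_{(1)}$ and $R_i = T_i + \mathcal O_{(0)}$), and the inductive hypothesis $\phi_{A,B'} = X_{B'}\phi_A + \mathcal O_{(m - o(AB') + 2)}$.

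For the derivation term, write $\xi_b = X_b + (\xi_b - X_b)$. An elementary component count using Proposition~\ref{Leftinv}, balancing the weights of $X_\alpha$ and $T_i$ against those of the correction components of $\xi_b - X_b$, shows that for any function $f$ whose Taylor series begins at order $k$ one has $(\xi_b - X_b)f \in \mathcal O_{(k - o(b) + 2)}$; both cases $b$ horizontal and $b$ vertical yield the same net gain of $2$ over $X_b f$. Combined with the inductive hypothesis, this gives
\[
\xi_b(\phi_{A,B'}) \;=\; X_b X_{B'}\phi_A \;+\; \mathcal O_{(m - o(AB) + 2)} \;=\; X_B \phi_A \;+\; \mathcal O_{(m - o(AB) + 2)}.
\]

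For the connection terms, the crucial structural input is that the Biquard connection preserves the splitting $H \oplus V$: hence $\omega\indices{_a}{^c}$ vanishes unless $o(a) = o(c)$, so each connection sum leaves the total index weight unchanged when passing from $\phi_{A,B'}$ to $\phi_{(A|c/a_k),B'}$ or $\phi_{A,(B'|c/b_j)}$. Since $\omega\indices{_a}{^c} \in \mathcal O_{(2)}$, its contraction with $\xi_b$ is a function in $\mathcal O_{(2 - o(b))}$; multiplying by $\phi_{(\cdot)}$, which by induction is of order at least $m - o(AB')$, makes each such term of order $\geq m + 2 - o(AB)$, which suffices. This closes the induction. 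The main obstacle, and the only genuinely geometric (as opposed to purely formal) input, is this block-diagonal structure of the Biquard connection under $H \oplus V$: without it one would pick up connection terms of order $m + 2 - o(AB) + (o(c) - o(a_k))$, which can be strictly smaller than required when $o(c) < o(a_k)$.
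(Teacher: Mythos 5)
The paper does not prove this statement: it is quoted verbatim from Kunkel (\cite{Kunk}, Lemma~3.9) without proof, so there is no internal argument to compare against. Your reconstruction is correct and is essentially the standard one (and matches the structure of Kunkel's original proof): induction on $\#B$, the frame formula \eqref{covarder} for covariant derivative components, the input $\omega\indices{_a}{^b}\in\mathcal O_{(2)}$ from Proposition~\ref{recurconn}, the gain of two orders in $\xi_a-X_a$ from Proposition~\ref{Leftinv}, and the block-diagonality of the Biquard connection with respect to $H\oplus V$, which you rightly single out as the one genuinely geometric ingredient keeping the index weight $o(A)$ unchanged under the replacement $a_k\mapsto c$. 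One small slip in your closing aside: the dangerous off-diagonal case is $o(c)>o(a_k)$ (a horizontal index replaced by a vertical one), since then $\phi_{(A|c/a_k),B'}$ lies only in $\mathcal O_{(m-o(AB')-1)}$ relative to what is needed; the case $o(c)<o(a_k)$ would actually improve the estimate. This does not affect the proof, as neither case occurs.
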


\subsection{QC parabolic normal coordinates} Using the qc conformal properties of a qc structure Kunkel was able to determine a conformal factor helping him to normalize the parabolic normal coordinates
(qc parabolic normal coordinates) in which coordinates many tensor
invariants of the qc structure vanish at the origin [\cite{Kunk},
Section~4]. We explain briefly his results which we need in
finding asymptotic expansion of  the qc Yamabe functional.

Let $\tilde\eta=e^{2u}\eta$ for a smooth function $u$. In this subsection we denote the objects with respect to $\tilde{\eta}$ by $\tilde .$, for a object $.$ with respect to $\eta$.
Suppose that $u$ is of order $m\ge 2$ with respect to the vector $P$. Kunkel showed that the connection 1-forms  of the Biquard connection $\tilde{\omega}$ and $\omega$
corresponding to $\tilde{\eta}$ and $\eta$, respectively, satisfy
$$\tilde{\omega}\indices{_{\alpha}}{^{\beta}}=\omega\indices{_{\alpha}}{^{\beta}}+\mathcal O_{(m)}; \quad
\tilde{\omega}\indices{_i}{^j}=\omega\indices{_i}{^j}+\mathcal O_{(m)}.
$$
For $u\in\mathcal O_{(m)}$, applying \eqref{tenL}, \eqref{tau} and \eqref{scal} one gets
\begin{equation}\label{tildeL}\tilde{L}_{\alpha\beta}=L_{\alpha\beta}-u_{(\alpha\beta)}+\mathcal O_{(m-1)},
\end{equation}
where we used the common notation $u_{(\alpha\beta)}=\frac12(u_{\alpha\beta}+u_{\beta\alpha})$ for the symmetric part of a tensor.

Consider the operator $A$ and the 2-tensor $B$, defined by
$$A\indices{_{i\alpha}}{^{j\beta}}=2\epsilon\indices{^{jk}}{_i}I\indices{_{k\alpha}}{^{\beta}}, \quad B_{ij}=R_{kl\alpha\beta}\epsilon\indices{^{kl}}{_i}I\indices{_j}{^{\alpha\beta}}.
$$
Using the first Bianchi identity from \cite{IMV,IV}, Kunkel \cite{Kunk} shows that $A$ is invertible and
\begin{gather}\nonumber
\tilde{A}=A+\mathcal O_{(m)}; \quad \tilde A^{-1}=A^{-1}+\mathcal O_{(m)}; \\  \label{tilde}
\tilde{T}_{\alpha jk}\tilde{\epsilon}\indices{_i}{^{jk}}=T_{\alpha jk}\epsilon\indices{_i}{^{jk}}+A\indices{_{i\alpha}}{^{j\beta}}u_{\beta j}+\mathcal O_{(m-2)};
\\\nonumber
\tilde B_{(ij)}=B_{(ij)}+16nu_{(ij)}+\mathcal O_{(m-3)}.
\end{gather}
Defining the symmetric tensor $\mathcal Q$ with
\begin{equation}\label{tQ}
\begin{split}
\mathcal Q_{\alpha\beta}=L_{\alpha\beta}+\frac1{8(n+2)}Sg_{\alpha\beta};\quad
\mathcal Q_{\alpha i}=\mathcal Q_{i \alpha}=-(A^{-1})\indices{_{i\alpha}}{^{j\beta}}T_{\beta kl}\epsilon\indices{_j}{^{kl}};\quad
\mathcal Q_{ij}=-\frac1{16n}B_{(ij)},
\end{split}
\end{equation}
it follows from \eqref{tildeL} and \eqref{tilde}  that for $u\in\mathcal O_{(m)}$ the symmetric tensor $\mathcal Q$ changes as follows \cite{Kunk}
\begin{equation}\label{tildeQ}
\begin{split}
\tilde{\mathcal  Q}_{\alpha\beta}=\mathcal Q_{\alpha\beta}-u_{(\alpha\beta)}+\triangle ug_{\alpha\beta}+\mathcal O_{(m-1)};\quad
\tilde{\mathcal Q}_{i\alpha}=\mathcal Q_{i \alpha}-u_{i\alpha}+\mathcal O_{(m-2)}, \quad
\tilde{\mathcal Q}_{ij}=\mathcal Q_{ij}-u_{(ij)}+\mathcal O_{(m-3)}.
\end{split}
\end{equation}
In his main theorem Kunkel [\cite{Kunk}, Theorem~3.16] proves that
for any $q\in M$ and any $m\geq 2$ there is $u$ which is a
homogeneous polynomial of order $m$ in parabolic normal
coordinates $(x,t)$ such that all the symmetrized covariant
derivatives of $\tilde{\mathcal Q}$ with total order less than or
equal to $m$ vanish at the point $q\in M$, i.e. $\tilde{\mathcal
Q}_{(ab,C)}(q)=0$ if $o(abC)\leq m$. Such a parabolic normal
coordinates are called qc parabolic normal coordinates.

Using this normalization for $\mathcal Q$, \eqref{tor0}  and the
identities from \cite{IMV} and \cite{IV} Kunkel shows that in the
center $q$ of the qc parabolic normal coordinates the Ricci
tensor, scalar curvature, quaternionic contact torsion, the Ricci
type tensors and many of their covariant derivatives vanish,
\begin{thrm}[\cite{Kunk}, Theorem~3.17]\label{mainK}
Let $(M,\eta,g)$ be a qc manifold  for
which the symmetrized covariant derivatives of the tensor $\mathcal{Q}$ vanish to total order $4$
at a point $q\in M$. Then the following curvature and torsion terms vanish at $q$.
\begin{equation*}
\begin{split} S, \quad \tau_{\alpha\beta},\quad \mu_{\alpha\beta},\quad L_{\alpha\beta},\quad R_{\alpha\beta},\quad \rho_{i\alpha\beta},\quad \zeta_{i\alpha\beta},\quad \sigma_{i\alpha\beta},\quad T_{\alpha i \beta},\quad T_{ijk},\quad T_{\alpha jk},\\S_{,\beta},\quad
\mu\indices{_{\alpha\beta ,}}{^\alpha},\quad \tau\indices{_{\alpha\beta ,}}{^\alpha},\quad B_{ij},\quad S_{,i},\quad S\indices{_{,\alpha}}{^\alpha},\quad\tau\indices{_{\alpha\beta ,}}{^{\alpha\beta}},\quad \mu\indices{_{\alpha\beta ,}}{^{\alpha\beta}}, \quad R\indices{_{\gamma i\beta}}{^\gamma}{_{,\alpha}}I^{i\beta\alpha}.
\end{split}
\end{equation*}
\end{thrm}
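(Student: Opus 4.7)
The plan is to decode the hypothesis $\mathcal{Q}_{(ab,C)}(q)=0$ for every multi-index with $o(abC)\le 4$ by increasing total order, using two ingredients: the algebraic formula \eqref{tQ} expressing $\mathcal{Q}$ in terms of $L$, $S$, the torsion $T$, and $B$; together with the algebraic and contracted differential Bianchi identities assembled in \eqref{tor0}. Each vanishing asserted in the theorem is isolated from the symmetrized datum at the appropriate order.

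\emph{Order two (pointwise conclusions).} From $\mathcal{Q}_{\alpha\beta}(q)=0$ one takes the $g$-trace. Since $\tau$ and $\mu$ are trace-free on $H$, \eqref{tenL} gives $\operatorname{tr}L=\frac{S}{8(n+2)}$, while tracing \eqref{tQ} gives $\operatorname{tr}L=-\frac{nS}{2(n+2)}$; equating these forces $S(q)=0$. Plugging this back into \eqref{tQ} yields $L_{\alpha\beta}(q)=0$, and because $\tau$ lies in the $(-1)$-eigenspace and $\mu$ in the $3$-eigenspace of the Casimir $\mathcal{C}$, the relation $\tfrac12\tau+\mu=0$ forces $\tau(q)=\mu(q)=0$ separately. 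From $\mathcal{Q}_{i\alpha}(q)=0$ and invertibility of the operator $A$ we get $T_{\alpha jk}(q)=0$, and from $\mathcal{Q}_{ij}(q)=0$ we get $B_{(ij)}(q)=0$. Substituting these vanishings into the explicit formulas for $T\indices{^\alpha}{_{i\beta}}$, $T\indices{^k}{_{ij}}$, $R_{\alpha\beta}$, $\rho_{i\alpha\beta}$, $\zeta_{i\alpha\beta}$, $\sigma_{i\alpha\beta}$ in \eqref{tor0} kills every one of them at $q$.

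\emph{Orders three and four (derivative conclusions).} Differentiating \eqref{tQ} once and imposing $\mathcal{Q}_{(\alpha\beta,\gamma)}(q)=\mathcal{Q}_{(\alpha i,\beta)}(q)=\mathcal{Q}_{(\alpha\beta,i)}(q)=0$ produces symmetric-part linear conditions on the covariant derivatives of $L$, $T$, $B$, and $S$. These alone are underdetermined, but coupling them with the three contracted second Bianchi-type identities at the end of \eqref{tor0} — which relate $\tau\indices{_{\alpha\beta,}}{^\beta}$, $\mu\indices{_{\alpha\beta,}}{^\beta}$, the torsion trace $\epsilon^{ijk}T\indices{^\beta}{_{jk}}I_{i\beta\alpha}$, the vertical Ricci contraction $R\indices{_{\gamma i\beta}}{^\gamma}I\indices{^{i\beta}}{_\alpha}$, and the gradient $S_{,\alpha}$ — the combined linear system is nondegenerate and yields $S_{,\alpha}(q)=\tau\indices{_{\alpha\beta,}}{^\beta}(q)=\mu\indices{_{\alpha\beta,}}{^\beta}(q)=B_{ij}(q)=S_{,i}(q)=0$. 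The same argument applied to $\mathcal{Q}_{(\alpha\beta,\gamma\delta)}(q)=0$ together with the once-differentiated Bianchi identities, after taking appropriate horizontal traces, delivers the remaining second-order statements $S\indices{_{,\alpha}}{^\alpha}(q)=\tau\indices{_{\alpha\beta,}}{^{\alpha\beta}}(q)=\mu\indices{_{\alpha\beta,}}{^{\alpha\beta}}(q)=R\indices{_{\gamma i\beta}}{^\gamma}{_{,\alpha}}I^{i\beta\alpha}(q)=0$.

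\emph{Main obstacle.} The heart of the difficulty is that $\mathcal{Q}$ encodes only symmetric combinations of curvature and torsion (and only the symmetric-trace $B_{(ij)}$ on the vertical side), so none of the individual vanishings can be read off directly from the hypothesis. The technical task at each order is to assemble the correct coupled linear system from $\mathcal{Q}_{(ab,C)}(q)=0$ and the Bianchi identities of \eqref{tor0}, verify nondegeneracy of the coefficient matrix uniformly in $n\ge 1$, and maintain the $\mathcal{C}$-eigenspace decomposition so that $\tau$ and $\mu$ separate cleanly at every stage. Once this invariant-theoretic bookkeeping is in place, every vanishing on the list follows mechanically.
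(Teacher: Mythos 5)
Note first that the paper does not actually prove this statement: it is imported verbatim as \cite{Kunk}, Theorem~3.17, with only the added remark that Kunkel's argument survives inspection in the case $n=1$. So your proposal is being measured against Kunkel's proof, whose overall architecture you have correctly reconstructed: decode $\mathcal{Q}_{(ab,C)}(q)=0$ order by order using \eqref{tQ}, \eqref{tenL} and the contracted Bianchi identities in \eqref{tor0}. Your order-two analysis is complete and correct: the two incompatible expressions for $\operatorname{tr}L$ force $S(q)=0$, hence $L(q)=0$, and the $\mathcal{C}$-eigenspace decomposition ($\mathcal{C}\tau=-\tau$, $\mathcal{C}\mu=3\mu$) cleanly separates $\tau(q)=\mu(q)=0$; invertibility of $A$ gives $T_{\alpha jk}(q)=0$, and substitution into \eqref{tor0} kills $T_{\alpha i\beta}$, $T_{ijk}$, $R_{\alpha\beta}$, $\rho$, $\zeta$, $\sigma$ at $q$.

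The gap is in the orders three and four, which account for the entire second line of the theorem. There you name the relevant linear systems and then assert that they are "nondegenerate and yield" the conclusions, explicitly deferring the verification as bookkeeping. That verification is the actual content of the step: nothing in the structure of the problem guarantees a priori that the symmetrized trace conditions plus the Bianchi identities determine each unknown separately rather than only certain combinations, and the claim $B_{ij}(q)=0$ (not merely $B_{(ij)}(q)=0$) in particular requires an identity for the antisymmetric part that you never exhibit. The computations do in fact close — for instance, at first order, $T_{\alpha jk}(q)=0$ reduces the first two identities in \eqref{tor0} to $\tau\indices{_{\alpha\beta,}}{^\beta}-6\mu\indices{_{\alpha\beta,}}{^\beta}=\tau\indices{_{\alpha\beta,}}{^\beta}=\tfrac{3}{16n(n+2)}S_{,\alpha}$ at $q$, whence $\mu\indices{_{\alpha\beta,}}{^\beta}(q)=0$, and the horizontal trace of $\mathcal{Q}_{(\alpha\beta,\gamma)}(q)=0$ then gives $\tfrac{8n^2+6n+4}{16n(n+2)}\,S_{,\alpha}(q)=0$, a manifestly nonzero coefficient for all $n\ge1$ — but none of this is in your write-up, and the second-order statements ($S\indices{_{,\alpha}}{^\alpha}$, $\tau\indices{_{\alpha\beta,}}{^{\alpha\beta}}$, $\mu\indices{_{\alpha\beta,}}{^{\alpha\beta}}$, $R\indices{_{\gamma i\beta}}{^\gamma}{_{,\alpha}}I^{i\beta\alpha}$) require an analogous, longer computation with the differentiated Bianchi identities. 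As it stands, the proposal is a correct plan with a fully executed first half and an unexecuted second half.
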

We note that Kunkel proved Theorem~\ref{mainK} for a qc manifold of dimension bigger than $7,  (n>1)$  but a careful examination of his proof leads that Theorem~\ref{mainK} holds also for dimension $7$.

An important consequence of Theorem~\ref{mainK} is that at the center $q$ of the qc parabolic normal coordinates  the horizontal curvature is equal to its  $\frak{sp}(n)$-component, i.e. the following curvature identities hold at $q$:
%We recall here some standard curvature identities that hold at the center $q$ of the normalization and will be used repeatedly (sometimes implicitly) in the calculations bellow:
\begin{multline}\label{Curvident}
\begin{aligned}
&R_{\alpha\beta\gamma\delta}(q)=-R_{\beta\alpha\gamma\delta}(q); \quad R_{\alpha\beta\gamma\delta}(q)=-R_{\alpha\beta\delta\gamma}(q);\quad R_{\alpha\beta\gamma\delta}(q)+R_{\beta\gamma\alpha\delta}(q)+R_{\gamma\alpha\beta\delta}(q)=0;\\&R_{\alpha\beta\gamma\delta}(q)=R_{\gamma\delta\alpha\beta}(q);\quad R_{\alpha\beta\gamma\delta}(q)I\indices{_i^{\delta\veps}}=R_{\alpha\beta\veps\delta}(q)I\indices{_i^{\delta\gamma}}.
\end{aligned}
\end{multline}
The first identity is clear,  the second one holds since the
Biquard connection preserves the metric. The third, fourth and
fifth  equalities are a  consequence of the first Bianchi identity
(see e.g. \cite[(3.2)]{IV}), \cite[Theorem~3.1]{IV},
\cite[Lemma~3.8]{IMV} and the fact that $S,\tau,\mu$ all vanish at
$q$ by Theorem~\ref{mainK}.

\subsection{Scalar polynomial invariants}
We recall here the definition of the notion "weight of a tensor", which plays a central role in our further considerations. First we remind the following
\begin{dfn}\label{weight}[\cite{Kunk} Definition~4.1.] Suppose $F$ is a homogeneous polynomial in $\{x^{\alpha},t^i\}$ whose coefficients are polynomial expressions in the curvature,
torsion and the covariant derivatives at $q.$ We define the \emph{weight} $w(F)$ recursively by
\begin{enumerate}[a)]
\item $w(T_{abc,D}(q))=o(bcD)-o(a);$
\item $w(R_{abcd,E}(q))=o(abcE)-o(d)=o(abE),$ since $c$ and $d$ always have the same order;
\item $w(F_1F_2)=w(F_1)+w(F_2);$
\item $w(g_{ab}(q))=w(g^{ab}(q))=w(I_{i\alpha\beta}(q))=w(\epsilon_{ijk}(q))=w(c)=0,$ where $c$ denotes an arbitrary constant, independent of the pseudohermitian structure;
\item $w(0)=m$ for all $m;$
\item if $w(F_A)=m$ for all $A,$ then $w(\sum_AF_Ax^A)=m.$
\end{enumerate}
\end{dfn}

The notion  "weight of a tensor" is an extension of the above definition. Namely, one says that a
 tensor $P$ has \emph{weight} $m$ and designate $w(P)=m,$ if its components with respect to the bases $\{X_a\}_{a=1}^{4n+3}$ and $\{\Xi^a\}_{a=1}^{4n+3}$ have weight $m.$
 Note that an arbitrary tensor $P$ can be decomposed in homogeneous parts and the components with respect to this bases are certain homogeneous polynomial in $\{x^{\alpha},t^i\}$.
%so the definition is correct.

Using the curvature and torsion identities from \cite{IMV,IV} Kunkel give in [\cite{Kunk}, Table~1] the following list of curvature and torsion terms of weight less then or equal to four:
\begin{itemize}
\item Weight 0: $g_{\alpha\beta}, g_{ij}, I_{i\alpha\beta},\epsilon_{ijk}$;
\item Weight 1: $T_{\alpha\beta\gamma}=0, T_{ij\alpha}=0$;
\item Weight 2: $T_{\alpha i\beta}, T_{ijk}, R_{\alpha\beta\gamma\delta}$;
\item Weight 3: $T_{\alpha ij}, T_{\alpha i\beta,\gamma}, R_{\alpha\beta\gamma\delta,\rho}, R_{\alpha i\beta\gamma}$;
\item Weight 4: $T_{\alpha ij,\beta}, T_{\alpha i\beta,\gamma\delta}, T_{\alpha i\beta,j}, R_{\alpha\beta\gamma\delta,\rho\sigma}, R_{\alpha\beta\gamma\delta,i}, R_{\alpha i\beta\gamma,\delta}, R_{ij\beta\gamma}$,
\end{itemize}
and shows in [\cite{Kunk}, Theorem~4.3] that  at the origin
of a qc parabolic  normal coordinates the only  tensors of weight at most four are the dimensional constants and the squared norm
of the qc conformal curvature tensor \eqref{qc-tensor}, namely we have
\begin{thrm}[\cite{Kunk}, Theorem~4.3]\label{normqc}
Let $(M,\eta,g,\mathbb{Q})$  be a qc manifold. Then, at the center $q\in M$ of the qc parabolic normal coordinates, the only invariant scalar quantities of weight no more than $4$ constructed
as polynomials from the invariants listed above are constants independent of
the qc structure and $||W^{qc}||^2$,
%the squared norm of the qc conformal curvature tensor;
in particular, all other invariant scalar terms vanish at $q\in M$.
\end{thrm}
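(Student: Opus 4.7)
The plan is to organize the argument by total weight $m$, combining Theorem~\ref{mainK} (which forces a large list of primitive tensors and their low-order covariant derivatives to vanish at $q$) with classical $Sp(n)Sp(1)$-invariant theory and the curvature symmetries \eqref{Curvident}. A scalar polynomial invariant of weight $m$ at $q$ is a sum of monomials in the weight-$\leq m$ primitives listed before Theorem~\ref{normqc} whose total weight equals $m$ and whose free indices are all contracted by the weight-zero invariants $g_{\alpha\beta}$, $g_{ij}$, $I_{i\alpha\beta}$, $\epsilon_{ijk}$. A key counting observation used repeatedly is that each of these contractors pairs an even number of horizontal indices, so any scalar monomial must contain an even total number of horizontal indices in the tensors it is built from.

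First I would dispose of the low weights. Weight $0$ yields only the constants. Weight $1$ is empty since both weight-$1$ primitives ($T_{\alpha\beta\gamma}$, $T_{ij\alpha}$) vanish identically. In weight $2$ the only primitive surviving at $q$ is $R_{\alpha\beta\gamma\delta}$, since $T_{\alpha i\beta}(q)=T_{ijk}(q)=0$ by Theorem~\ref{mainK}, and any scalar linear in $R_{\alpha\beta\gamma\delta}$ is, up to a dimensional constant, one of the Ricci-type traces $S$, $\rho_i$, $\zeta_i$, $\sigma_i$, all of which vanish at $q$ by Theorem~\ref{mainK}. In weight $3$, each admissible primitive $T_{\alpha ij},\ T_{\alpha i\beta,\gamma},\ R_{\alpha\beta\gamma\delta,\rho},\ R_{\alpha i\beta\gamma}$ carries an odd number of horizontal indices, and no quadratic monomial can reach weight $3$ because every weight-$2$ primitive already has weight $\geq 2$; the parity observation thus rules out all linear contributions, so weight $3$ is empty as well.

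The substantive step is weight $4$. An admissible monomial is either quadratic in weight-$2$ primitives or linear in a weight-$4$ primitive. In the quadratic case the vanishings $T_{\alpha i\beta}(q)=T_{ijk}(q)=0$ leave only double contractions of $R_{\alpha\beta\gamma\delta}\otimes R_{\alpha'\beta'\gamma'\delta'}$; the symmetries \eqref{Curvident} together with the vanishing at $q$ of $R_{\alpha\beta}$, $\rho_{i\alpha\beta}$, $\zeta_{i\alpha\beta}$, $\sigma_{i\alpha\beta}$ and $L_{\alpha\beta}$ collapse every $Sp(n)Sp(1)$-invariant double contraction to a constant multiple of $R_{\alpha\beta\gamma\delta}R^{\alpha\beta\gamma\delta}$; since $L(q)=0$, formula \eqref{qc-tensor} gives $W^{qc}_{\alpha\beta\gamma\delta}(q)=R_{\alpha\beta\gamma\delta}(q)$, whence this contribution is a constant multiple of $||W^{qc}||^2$. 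For the linear piece I would restrict to the even-horizontal-parity cases $T_{\alpha ij,\beta}$, $T_{\alpha i\beta,j}$, $R_{\alpha\beta\gamma\delta,\rho\sigma}$, $R_{ij\beta\gamma}$ and use the second Bianchi identity, the algebraic relations \eqref{tor0}, and the $\mathcal{C}$-eigenspace splitting of the torsion ($\mathcal{C}\tau=-\tau$, $\mathcal{C}\mu=3\mu$) to rewrite every such scalar as a linear combination of the quantities $S\indices{_{,\alpha}}{^\alpha}$, $\tau\indices{_{\alpha\beta,}}{^{\alpha\beta}}$, $\mu\indices{_{\alpha\beta,}}{^{\alpha\beta}}$, $B_{ij}$, $S_{,i}$, and $R\indices{_{\gamma i\beta}}{^\gamma}{_{,\alpha}}I^{i\beta\alpha}$ that appear in the null list of Theorem~\ref{mainK}, all of which vanish at $q$.

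The main obstacle is the combinatorial reduction in this last step: one has to verify that every $Sp(n)Sp(1)$-equivariant total contraction of the mixed horizontal/vertical tensors $T_{\alpha ij,\beta}$, $T_{\alpha i\beta,j}$, $R_{\alpha\beta\gamma\delta,\rho\sigma}$, $R_{ij\beta\gamma}$ is indeed expressible through the scalars on Kunkel's null list, with no residual weight-$4$ invariant escaping the procedure. This is secured by systematically applying the first and second Bianchi identities for the Biquard connection combined with the $Sp(1)$-decomposition of the torsion into the eigenspaces of $\mathcal{C}$ and with the way the components $R_{\alpha i\beta\gamma}$ and $R_{ij\beta\gamma}$ of the full curvature are determined, through the structure equations, by horizontal derivatives of $\tau$, $\mu$ and $S$; it is exactly the content of \cite[Theorem~4.3]{Kunk}. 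Once this reduction is completed, combining the quadratic-in-$R$ piece with the vanishing linear pieces yields the stated classification.
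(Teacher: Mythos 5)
The paper does not actually prove this statement: Theorem~\ref{normqc} is imported verbatim from Kunkel as \cite[Theorem~4.3]{Kunk}, so there is no internal argument to measure yours against. Your weight-by-weight outline is a plausible reconstruction of how such a proof must go, and the low-weight cases are handled correctly: the observation that every contractor ($g_{\alpha\beta}$, $g_{ij}$, $I_{i\alpha\beta}$, $\epsilon_{ijk}$) absorbs an even number of horizontal indices, combined with the vanishings of Theorem~\ref{mainK} and the trace formulas in \eqref{tor0}, does dispose of weights $1$, $2$ and $3$.

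Two genuine problems remain in weight $4$. First, your parity filter is miscounted: $T_{\alpha i\beta,\gamma\delta}$, $R_{\alpha\beta\gamma\delta,i}$ and $R_{\alpha i\beta\gamma,\delta}$ each carry \emph{four} horizontal indices, hence even parity, so they are not excluded and must enter the linear reduction alongside the four primitives you list; as written your argument silently drops them. (Their full contractions are exactly the scalars $\tau\indices{_{\alpha\beta,}}{^{\alpha\beta}}$, $\mu\indices{_{\alpha\beta,}}{^{\alpha\beta}}$, $S_{,i}$ and $R\indices{_{\gamma i\beta}}{^\gamma}{_{,\alpha}}I^{i\beta\alpha}$ on the null list of Theorem~\ref{mainK}, so the conclusion survives, but the stated argument does not cover them.) Second, and more seriously, the decisive step --- that every $Sp(n)Sp(1)$-invariant full contraction of the weight-$4$ primitives reduces to that null list, and that every quadratic contraction of $R\otimes R$ at $q$ is a constant multiple of $R_{\alpha\beta\gamma\delta}R^{\alpha\beta\gamma\delta}$ --- is justified in your write-up by citing \cite[Theorem~4.3]{Kunk}, which is precisely the statement being proved. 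That makes the proposal circular exactly where the content lies; a self-contained proof has to enumerate the finitely many contraction schemes and reduce each one using \eqref{Curvident}, \eqref{tor0} and the Bianchi identities, which is what Kunkel's invariant-theoretic argument does and what your sketch only gestures at.
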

In what follows we use also the  next
\begin{conv}\label{conv1}\hfill\break\vspace{-15pt}
\begin{enumerate}[a)]
\item We denote by $(\eta=(\eta_1,\eta_2,\eta_3),g)$ the qc structure normalized according to Theorem~\ref{mainK}. The corresponding qc parabolic normal coordinates will be signified by $\{x^\alpha,t^i\}$.
\item We shall use $\{\xi_1,\ldots,\xi_{4n},R_1,R_2,R_3\}=\{\xi_a\}_{a=1}^{4n+3}$ to denote the special frame, corresponding to the contact form $\eta.$ The (dual) co-frame will be designated by $\{\theta^1,\ldots,\theta^{4n},\eta^1,\eta^2,\eta^3\}=\{\theta^a\}_{a=1}^{4n+3}$.
%\item The natural parabolic dilations on the Heisenberg group $\mathbf{G}(\mathbb{H})$ are the automorphisms $\delta_s:\mathbf{G}(\mathbb{H})\rightarrow\mathbf{G}(\mathbb{H}), %s>0,$ defined by $\delta_s(x^{\alpha},t^i)=(sx^{\alpha},s^2t^i),$ and the vector field $P:=x^\alpha\partial_\alpha+2t^i\partial_i$ is the infinitesimal generator of these actions.
\item The index notations of the tensors will be used only with respect to the special frame $\{\xi_a\}_{a=1}^{4n+3}$ and the special co-frame $\{\theta^a\}_{a=1}^{4n+3}.$ For example, $A_{\alpha\beta}=A(\xi_\alpha,\xi_\beta), \quad B\indices{_{\alpha\beta}^\tgamma}=\theta^{\tgamma}(B(\xi_\alpha,\xi_\beta))$ and s.o.
\end{enumerate}
\end{conv}

\section{The asymptotic expansion of the qc Yamabe functional}\label{Asymptexp}
%We shall prove at the beginning of this section a number of lemmas,
In order to find an asymptotic expansion of the qc Yamabe
functional we prove a number of lemmas.
\begin{lemma}\label{leftinvhom}
For the standard left-invariant frame and co-frame on  $\mathbf{G}(\mathbb{H})$ the next assertions hold:
\begin{enumerate}[a)]
\item The vector fields $X_\alpha$ are homogeneous of order $-1.$
\item The vector fields $T_i$ are homogeneous of order $-2.$
\item The $1-$forms $\Xi^\alpha$ are homogeneous of order $1.$
\item The $1-$forms $\Theta^i$ are homogeneous of order $2.$
\end{enumerate}
\end{lemma}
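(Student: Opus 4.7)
The approach is a direct application of the definition $\mathcal{L}_P\phi = m\phi$ of homogeneity of order $m$, using the explicit generator $P = x^\alpha\partial_\alpha + 2t^i\partial_i$ of the parabolic dilations on the quaternionic Heisenberg group. Everything reduces to commutator computations on the coordinate vector fields $\partial/\partial x^\alpha$ and $\partial/\partial t^i$, together with the Leibniz rule and the identity $\mathcal{L}_P\,d\phi = d(\mathcal{L}_P\phi)$.

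First I would compute the two basic brackets
\[
[P,\partial/\partial x^\alpha] = -\partial/\partial x^\alpha,\qquad [P,\partial/\partial t^i] = -2\,\partial/\partial t^i,
\]
which follow at once from $[x^\beta\partial/\partial x^\beta,\partial/\partial x^\alpha] = -\partial/\partial x^\alpha$ and $[2t^j\partial/\partial t^j,\partial/\partial t^i] = -2\partial/\partial t^i$. Part (b) is then immediate because $T_i = 2\partial/\partial t^i$. For part (a) I would expand
\[
[P, X_\alpha] = [P,\partial/\partial x^\alpha] + 2I\indices{^i}{_\beta}{_\alpha}\bigl([P,x^\beta]\,\partial/\partial t^i + x^\beta[P,\partial/\partial t^i]\bigr),
\]
use $P(x^\beta) = x^\beta$, and simplify to $[P,X_\alpha] = -\partial/\partial x^\alpha - 2I\indices{^i}{_\beta}{_\alpha}x^\beta\partial/\partial t^i = -X_\alpha$, which is precisely the claim $\mathcal{L}_P X_\alpha = -X_\alpha$.

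For the one-forms I would rely on $\mathcal{L}_P\,dx^a = d(Px^a)$ together with the Leibniz rule for $\mathcal{L}_P$. Since $P(x^\alpha) = x^\alpha$ and $P(t^i) = 2t^i$ we get $\mathcal{L}_P\,dx^\alpha = dx^\alpha$ and $\mathcal{L}_P\,dt^i = 2\,dt^i$, which is exactly (c). For (d), combining $\mathcal{L}_P(x^\alpha dx^\beta) = (Px^\alpha)dx^\beta + x^\alpha(\mathcal{L}_P dx^\beta) = 2x^\alpha dx^\beta$ with $\mathcal{L}_P(\tfrac12 dt^i) = dt^i$ yields
\[
\mathcal{L}_P\Theta^i = dt^i - 2I\indices{^i}{_\alpha}{_\beta}x^\alpha dx^\beta = 2\Theta^i.
\]

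There is no real obstacle here: the entire lemma is the algebraic fact that each $\partial/\partial x^\alpha$ (resp.\ $\partial/\partial t^i$) carries parabolic weight $-1$ (resp.\ $-2$) and dually for $dx^\alpha$ and $dt^i$, and the bilinear terms $x^\alpha dx^\beta$ and $I\indices{^i}{_\beta}{_\alpha}x^\beta \partial/\partial t^i$ in $\Theta^i$ and $X_\alpha$ automatically respect these weights. The only point requiring mild care is the sign convention, ensuring that $\mathcal{L}_P Y = mY$ for a vector field $Y$ is consistent with the pull-back scaling $\delta_s^*Y = s^{-m}Y$ and dually $\delta_s^*\omega = s^m\omega$ for a one-form.
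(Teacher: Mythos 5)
Your proposal is correct and takes essentially the same route as the paper: the paper likewise verifies $\mathcal{L}_P X_\alpha=-X_\alpha$ and $\mathcal{L}_P T_i=-2T_i$ by direct computation, and obtains $\mathcal{L}_P\Xi^\alpha=\Xi^\alpha$, $\mathcal{L}_P\Theta^i=2\Theta^i$ by a short calculation (using Cartan's formula $\mathcal{L}_X\omega=X\lrcorner d\omega+d(X\lrcorner\omega)$ where you instead commute $\mathcal{L}_P$ with $d$ and apply the Leibniz rule — an immaterial difference). All the individual computations you display check out against the definitions \eqref{hforms} and \eqref{invfv}.
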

\begin{proof} To check a), take the Lie derivative of $X_\alpha$ with respect to the vector field $P$ .
For a smooth function $f$ we calculate $(\mathcal{L}_PX_\alpha)f=-X_\alpha f,$ i.e. $\mathcal{L}_PX_\alpha=-X_\alpha.$ Similarly, $\mathcal{L}_PT_i=-2T_i$ which proofs b).
Tho check c) and d), we use the Cartan formula $\mathcal{L}_X\omega=X\lrcorner d\omega+d(X\lrcorner\omega)$ for a vector field $X\in\Gamma(TM)$ and a differential form $\omega\in\Omega(M).$
Standard calculations lead to $\mathcal{L}_P\Xi^\alpha=\Xi^\alpha$ and $\mathcal{L}_P\Theta^i=2\Theta^i.$ Note that the last facts are implicitly mentioned in Proposition~\ref{recurconn}.
\end{proof}
\noindent The next lemma gives an information for the homogeneous parts of certain orders of the coordinate functions of the special frame $\{\xi_a\}_{a=1}^{4n+3}$ with respect to the
standard left-invariant vector fields $\{X_a\}_{a=1}^{4n+3}$ on $\mathbf{G}(\mathbb{H})$. %the Heisenberg group.
\begin{lemma}\label{decompxi}
If $\xi_a=s_a^bX_b,$ then the following relations hold:
\begin{equation}\label{coorfun}
s_{\alpha(0)}^{\beta}=\delta_{\alpha}^{\beta},\quad s_{\alpha(0)}^{\talpha}=s_{\alpha(1)}^{\talpha}=s_{\talpha(0)}^{\alpha}=s_{\talpha(1)}^{\tbeta}=0,\quad s_{\talpha(0)}^{\tbeta}=\delta_{\talpha}^{\tbeta}.
\end{equation}
Moreover, for a natural number $m$, the next recursive formula is true:
\begin{equation}\label{coorrec}
s_{a(m+o(b)-o(a))}^b=-\sum_{i\geq 2}s_{a(m+o(c)-o(a)-i)}^c\theta_{(o(b)+i)}^b(X_c).
\end{equation}
\end{lemma}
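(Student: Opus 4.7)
The plan is to exploit the duality $\theta^b(\xi_a)=\delta_a^b$ between the special frame and its dual co-frame, combined with the homogeneous expansion of the special co-frame provided by Proposition~\ref{recurconn}. Writing $\xi_a=s_a^c X_c$ and pairing with $\theta^b$ gives the master identity
\begin{equation*}
\delta_a^b=\theta^b(\xi_a)=s_a^c\,\theta^b(X_c).
\end{equation*}
Proposition~\ref{recurconn} identifies the leading homogeneous pieces of the special co-frame with the flat left-invariant forms from \eqref{hforms} and \eqref{invfv}: $\theta^\alpha_{(1)}=dx^\alpha=\Xi^\alpha$ and $\eta^i_{(2)}=\Theta^i$, while the next pieces $\theta^\alpha_{(2)}$ and $\eta^i_{(3)}$ vanish identically. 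Writing $\Theta^b$ for the leading piece of $\theta^b$, a direct computation using \eqref{hforms} and \eqref{invfv} yields $\Theta^b(X_c)=\delta_c^b$. Substituting the expansion $\theta^b=\Theta^b+\sum_{i\ge 2}\theta^b_{(o(b)+i)}$ into the master identity and rearranging produces
\begin{equation*}
s_a^b=\delta_a^b-\sum_{i\ge 2} s_a^c\,\theta^b_{(o(b)+i)}(X_c).
\end{equation*}

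The recursion~\eqref{coorrec} is obtained by extracting the homogeneous-order-$(m+o(b)-o(a))$ part of both sides. By Lemma~\ref{leftinvhom} the vector field $X_c$ is homogeneous of order $-o(c)$, so $\theta^b_{(o(b)+i)}(X_c)$ is a scalar function of homogeneous order $o(b)+i-o(c)$. Hence the product $s_a^c\,\theta^b_{(o(b)+i)}(X_c)$ contributes to total order $m+o(b)-o(a)$ exactly through the piece of $s_a^c$ of order $m+o(c)-o(a)-i$, reproducing \eqref{coorrec}; the constant $\delta_a^b$ on the right makes no contribution once $m+o(b)-o(a)\ge 1$.

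For the initial values~\eqref{coorfun} I specialize to orders $0$ and $1$. At order $0$ the $\delta_a^b$ term survives, while every summand on the right requires $s_a^c$ at order $o(c)-o(a)-i\le 2-1-2=-1$, which vanishes by smoothness of the transition functions; this yields $s^b_{a(0)}=\delta_a^b$, hence all four order-$0$ relations in \eqref{coorfun}. The same weight count at order $1$ forces $s^{\talpha}_{\alpha(1)}=0$ and $s^{\tbeta}_{\talpha(1)}=0$. The only real subtlety is the bookkeeping of parabolic weights together with the fact from Proposition~\ref{recurconn} that the first correction beyond $\Theta^b$ is of order $o(b)+2$ rather than $o(b)+1$, which pins the summation range to $i\ge 2$ and makes the recursion well-posed (each $s^b_{a(N)}$ being determined from strictly lower-order pieces).
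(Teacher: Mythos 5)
Your proof is correct and follows essentially the same route as the paper: both rest on extracting homogeneous parts of the duality identity $\delta_a^b=s_a^c\,\theta^b(X_c)$, using that the leading piece of $\theta^b$ satisfies $\theta^b_{(o(b))}(X_c)=\delta_c^b$ while $\theta^{\alpha}_{(2)}=0$ and $\eta^i_{(3)}=0$ by Proposition~\ref{recurconn}. The only differences are cosmetic: the paper obtains \eqref{coorfun} by matching the lowest homogeneous orders in $\xi_a=s_a^bX_b$ via Proposition~\ref{Leftinv} rather than from the rearranged master identity, and your order count $o(c)-o(a)-i$ should read $o(c)-o(b)-i$, a slip that does not affect the bound $\le -1$ and hence not the conclusion.
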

\begin{proof}
First, we have $\xi_\alpha=s_{\alpha}^{\beta}X_\beta+s_{\alpha}^{\talpha}X_{\talpha}.$ We get by Proposition~\ref{Leftinv} and Lemma~\ref{leftinvhom} that
$\xi_{\alpha}\in\mathcal{O}_{(-1)},\quad s_{\alpha}^{\beta}X_\beta\in\mathcal{O}_{(-1)}$ and $s_{\alpha}^{\talpha}X_{\talpha}\in\mathcal{O}_{(-2)}$ (note that $s_a^b\in\mathcal{O}_{(0)}$).
Taking the homogeneous parts of order $-1$ and $-2$ in the above equality, we obtain that $s_{\alpha(0)}^{\beta}=\delta_{\alpha}^{\beta},\quad s_{\alpha(1)}^{\talpha}=0$ and $s_{\alpha(0)}^{\talpha}=0,$
respectively. Similarly, taking the homogeneous parts of order $-2$  and $-1$ in the equality $\xi_{\talpha}=s_{\talpha}^{\alpha}X_\alpha+s_{\talpha}^{\tbeta}X_{\tbeta}$
we get $s_{\talpha(0)}^{\tbeta}=\delta_{\talpha}^{\tbeta}$ and $s_{\talpha(0)}^{\alpha}=s_{\talpha(1)}^{\tbeta}=0,$ respectively, which proofs \eqref{coorfun}.

In order to prove \eqref{coorrec}  we take the homogeneous parts of order $m+o(b)-o(a)$ in  the equality $\delta_a^b=s_a^c\theta^b(X_c)$. We  separate the proof in two cases.
The first case appears when $m+o(b)-o(a)>0.$ Then we get
\begin{equation*}
\begin{split}
0=\delta_{a(m+o(b)-o(a))}^b=\sum_{i\geq0}s_{a(m+o(c)-o(a)-i)}^c[\theta^b(X_c)]_{(o(b)-o(c)+i)}, \quad \textnormal{i.e.} \\ s_{a(m+o(b)-o(a))}^b=-\sum_{i\geq2}s_{a(m+o(c)-o(a)-i)}^c\theta_{(o(b)+i)}^b(X_c).
\end{split}
\end{equation*}
 Note that the left-hand side of the last identity is just the term  in the sum that corresponds to $i=0,$ while the term that correlates to $i=1$ is equal to $0,$ by Proposition~\ref{recurconn}.

The case $m+o(b)-o(a)=0$ occurs only when $m=1, o(b)=1$ and $o(a)=2.$ Hence, we have $b=\alpha, a=\talpha$ and the left-hand side of \eqref{coorrec} becomes $s_{\talpha(0)}^{\alpha}=0,$ according to \eqref{coorfun}. Moreover, $s_{\talpha(o(c)-1-i)}^c=0$ for $i\geq2,$ and the right-hand side of \eqref{coorrec} becomes also $0$
%Therefore, \eqref{coorrec} holds in this case, too,
which completes the proof.
\end{proof}

A crucial auxiliary result that help us to find an asymptotic expansion of the qc Yamabe functional is %the next
\begin{lemma}\label{Cruclem} The differential forms $\eta_{(m)}^i$ and $d\eta_{(m)}^i$ have weight $m-2.$ The vector fields $\xi_{\alpha(m)}$ have weight $m+1,$
while the vector fields $\xi_{\talpha(m)}$ have weight $m+2.$ Finally, the function $S_{(m)}$ has weight $m+2.$
\end{lemma}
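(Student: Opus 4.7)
The plan is to prove the four claims in cascade, driven by strong induction on $m$ using the recursions of Proposition~\ref{recurconn} and the Taylor formula of Proposition~\ref{Taylor}. The first goal is to establish, by simultaneous induction on $m$, the auxiliary weight formulas
\[
w(\eta^i_{(m)}) = m-2,\qquad w(\theta^\alpha_{(m)}) = m-1,\qquad w(\omega^b_{a(m)}) = m + o(a) - o(b).
\]
The base cases come from the explicit low-order pieces in Proposition~\ref{recurconn}: $\eta^i_{(2)} = \Theta^i$ and $\theta^\alpha_{(1)} = dx^\alpha = \Xi^\alpha$ have constant (hence weight-zero) components in the left-invariant co-frame $\{\Xi^a\}$; the remaining low-order pieces $\eta^i_{(3)},\theta^\alpha_{(2)},\omega^b_{a(1)}$ vanish identically. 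In the inductive step, each recursion of Proposition~\ref{recurconn} expresses $\phi_{(m)}$ as the $\mathcal{L}_P$-homogeneous order-$m$ truncation of a sum of terms of the form $t^j\cdot(\ldots)$, $x^\gamma\cdot(\ldots)$, or a curvature/torsion-weighted combination of lower-order $\eta,\theta,\omega$ pieces. Since $t^j$ and $x^\gamma$ contribute Kunkel weight $0$ by rule (f) of Definition~\ref{weight}, and since the order-$m$ truncation selects precisely the right lower-order piece, a direct weight count yields the conclusion; for instance $(t^j\omega_j^i)_{(m)} = t^j\omega^i_{j(m-2)}$ has weight $(m-2)+o(j)-o(i) = m-2$ by the inductive hypothesis, and the remaining summands are handled identically.

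With the forms controlled, the two geometric claims follow. For $d\eta^i_{(m)}$, use that $d$ commutes with $\mathcal{L}_P$, so $d\eta^i_{(m)} = (d\eta^i)_{(m)}$; writing $\eta^i_{(m)} = f_a\Xi^a$ with $w(f_a) = m-2$ and applying $d$ via $d\Xi^\alpha = 0$ and $d\Xi^{\tilde i} = -I^i_{\alpha\beta}\Xi^\alpha\wedge\Xi^\beta$, the components of $d\eta^i_{(m)}$ in the basis $\{\Xi^c\wedge\Xi^a\}$ are linear combinations of $X_c f_a$ and of $f_a$ times the weight-zero constants $I^i_{\alpha\beta}$. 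Since $X_c$ acts on a coordinate polynomial by producing only weight-zero constants ($\delta^\gamma_\alpha$ from $\partial_\alpha$ and $I^i_{\beta\alpha}$ from the vertical piece of $X_\alpha$), Kunkel weight is preserved, and $w(d\eta^i_{(m)}) = m-2$. For the vector fields, expand $\xi_{a(m)} = s^b_{a(m+o(b))} X_b$ via Lemma~\ref{decompxi}; it then suffices to show the auxiliary identity $w(s^b_{a(k)}) = k + o(a) - o(b)$, which I prove by strong induction on $k$ from the recursion~\eqref{coorrec}, plugging in the already-known $w(\theta^b_{(o(b)+i)}) = i$. This yields $w(\xi_{\alpha(m)}) = m+1$ and $w(\xi_{\talpha(m)}) = m+2$.

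For $S_{(m)}$, I apply Proposition~\ref{Taylor}:
\[
S_{(m)} = \sum_{o(A)=m}\frac{1}{(\#A)!}\Bigl(\frac12\Bigr)^{o(A)-\#A} x^A\,(X_A S)\big|_q.
\]
The factor $x^A$ is weight-zero by rule (f), so I must show $w((X_A S)|_q) = o(A) + 2$ whenever $o(A) = m$. Iterating Lemma~\ref{derK} on the scalar $S$ (which lies in $\mathcal{O}_{(2)}$ by Theorem~\ref{mainK}), each $(X_A S)|_q$ rewrites as a polynomial combination, with weight-zero coefficients, of symmetrized covariant derivatives $S_{,A'}(q)$ together with curvature/torsion correction factors absorbing the $\mathcal{O}_{(\cdot)}$-error terms from Lemma~\ref{derK}. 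By Definition~\ref{weight}, $w(S_{,A'}(q)) = o(A')+2$, and a careful count shows every term has total weight exactly $o(A)+2 = m+2$; hence $w(S_{(m)}) = m+2$.

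The main obstacle is combinatorial rather than conceptual: one must verify that every recursion in Proposition~\ref{recurconn} and~\eqref{coorrec}, and every application of Lemma~\ref{derK}, is homogeneous under Kunkel's weight and not merely under parabolic $\mathcal{L}_P$-order. Once this homogeneity is pinned down, the entire lemma reduces to dimensional analysis: the orders of the coordinate factors $x^\alpha, t^i$, the curvature/torsion invariants at $q$, and the lower-order forms $\eta,\theta,\omega$ all conspire to match the claimed exponents $m-2$, $m+1$, and $m+2$.
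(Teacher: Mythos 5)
Your overall strategy---induction on the homogeneity order driven by the recursions of Proposition~\ref{recurconn} and the Taylor formula of Proposition~\ref{Taylor}---is the right one, and it is the paper's strategy too. But your cascade ordering has a genuine gap: the first induction (on $\eta^i_{(m)},\theta^\alpha_{(m)},\omega\indices{_a^b}_{(m)}$ alone) is not well-founded. The recursions in Proposition~\ref{recurconn} do not only involve $t^j\omega\indices{_j^i}$ and $x^\beta\omega\indices{_\beta^\alpha}$; they feed in the torsion and curvature \emph{as functions}, e.g.\ $(T\indices{^i_{jk}}t^j\eta^k)_{(m)}$, $(T\indices{^\alpha_{i\beta}}t^i\theta^\beta)_{(m)}$, $(R\indices{_{\alpha\beta a}^b}x^\alpha\theta^\beta)_{(m)}$. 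Extracting the order-$m$ part therefore requires knowing the weights of the homogeneous parts $T_{abc(l)}$ and $R_{abcd(l)}$ for $l>0$, which are themselves nontrivial claims: by Proposition~\ref{Taylor} their coefficients are the frame derivatives $X_AT_{abc}|_q$, and these are \emph{not} the covariant derivatives $T_{abc,A}(q)$ to which Definition~\ref{weight} assigns a weight---they differ by connection terms $\omega\indices{_a^e}(\xi_d)T_{ebc,\dots}$ whose weights in turn depend on $\omega$ and on the frame $\xi_a$ (equivalently the transition functions $s_a^b$). So the weights of the coframe, the frame, the connection forms, and the homogeneous parts of the torsion and curvature (with all covariant derivatives) must be established by one \emph{simultaneous} induction; they cannot be ordered as "forms first, then frame." This simultaneous induction, together with the careful conversion between $X_AT_{abc}|_q$ and $T_{abc,A}(q)$, is precisely the content and the bulk of the paper's proof, and it is the step you defer to "dimensional analysis" in your closing paragraph without carrying it out.

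Two smaller points. For $S_{(m)}$ the paper simply reads off the result from the already-proved statement that $R_{\alpha\gamma\delta\beta(m)}$ has weight $m+2$, via $S=g^{\alpha\beta}g^{\gamma\delta}R_{\alpha\gamma\delta\beta}$; your route through Proposition~\ref{Taylor} and Lemma~\ref{derK} can be made to work but again hinges on the same unproved weight count for the connection correction terms, and Lemma~\ref{derK} only controls errors in the parabolic-order filtration, so you would need to argue separately that those errors do not contribute at $q$ at the relevant order. Your treatment of $d\eta^i_{(m)}$ (commuting $d$ with the homogeneous decomposition and noting that coordinate differentiation of the polynomial coefficients preserves weight) is fine and consistent with the paper.
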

\begin{proof}
We shall prove by induction on $k$ that  all the objects
\begin{equation}\label{objectsrec}
\begin{aligned}
&\eta_{(k+2)}^i,\quad d\eta_{(k+2)}^i,\quad\theta_{(k+1)}^{\alpha},\quad\omega_{a(k)}^{\phantom{c}b},\quad\xi_{a(k-o(a))},\quad s_{a(k+o(b)-o(a))}^b,\\ &T_{abc,A(k-o(A)-o(bc)+o(a))},\quad R_{abcd,A(k-o(A)-o(ab))}
\end{aligned}
\end{equation}
have weight $k.$

\emph{A) Base of the induction: $k=0$.} We have $\eta^i_{(2)}=1.\Theta^i$ by Proposition~\ref{recurconn} and $w(1)=0$ by Definition~\ref{weight}, so $w(\eta^i_{(2)})=0.$
In the same way, $w(d\eta_{(2)}^i)=w(\theta_{(1)}^{\alpha})=w(\omega_{a(0)}^{\phantom{c}b})=0.$ Proposition~\ref{Leftinv} and Lemma~\ref{leftinvhom} imply $\xi_{a(-o(a))}=1.X_a$
which combined with $w(1)=0$ yield $w(\xi_{a(-o(a))})=0.$ From \eqref{coorfun} we have $s_{\alpha(o(\beta)-o(\alpha))}^{\beta}=s_{\alpha(0)}^{\beta}=\delta_{\alpha}^{\beta}$,
%Hence $w(s_{\alpha(o(\beta)-o(\alpha))}^{\beta})=0.$
and similarly for the other choices of the indices $a$ and $b$.
Hence, we get $w(s_{a(o(b)-o(a))}^{b})=0.$  Next, we have $T_{abc,A(-o(A)-o(bc)+o(a))}=0$ except the case when $A=\emptyset,\quad a=\talpha,\quad b=\alpha,\quad c=\beta,$
in which situation we obtain $T_{\talpha\alpha\beta(0)}=-2I_{\talpha\alpha\beta}=Const.$ Thus, $w(T_{abc,A(-o(A)-o(bc)+o(a))})=0.$ Finally, we obtain in the same manner that $w(R_{abcd,A(-o(A)-o(ab))})=0,$
which completes the base of the induction.

\emph{B) Inductive step.} Suppose that all the objects in
\eqref{objectsrec} have weight $k$ for $k\leq m.$ We are going to
prove it holds for $k=m+1.$ The first step is to check the
assertion for the torsion and the curvature when $A=\emptyset.$

First we have to show that $T_{abc(m+1-o(bc)+o(a))}$ has weight $m+1.$ Applying Proposition~\ref{Taylor}, we get
\begin{equation*}
T_{abc(m+1-o(bc)+o(a))}=\sum_A\frac{1}{(\#A)!}\Big(\frac{1}{2}\Big)^{o(A)-\#A}x^A(X_AT_{abc})|_q,
\end{equation*}
where the sum is taken over all multi-indices $A:
o(A)=m+1-o(bc)+o(a)$. We show that $X_AT_{abc}|_q$ has weight
$m+1$ for any multi-index $A.$ (Note that we use here the same
letter $A$ to denote the corresponding multi-index; we are doing
this now and later in order to avoid the excessive accumulation of
letters). For that purpose, we  will prove  that $\xi_AT_{abc}|_q$
has weight $m+1$ for any multi-index $A$ with the mentioned order.

We recall that if $\{T_{a_1\ldots a_r}\}$ are the components of a
tensor $T$ of type $(0,r)$ with respect to the frame
$\{\xi_a\}_{a=1}^{4n+3}$ then  the components of the covariant
derivative of $T$ along the vector field $\xi_b$ are given by
\begin{equation}\label{covarder}
T_{A,b}=\xi_bT_A-\sum_{i=1}^{r}\omega\indices{_{a_i}^c}(\xi_b)T_{a_1\ldots a_{i-1}ca_{i+1}\ldots a_r},\quad A=(a_1\ldots a_r).
\end{equation}
We introduce the  notation:
\begin{equation*}
P_{abc,Ad}:=\omega\indices{_a^e}(\xi_d)T_{ebc,A}+\omega\indices{_b^e}(\xi_d)T_{aec,A}+\omega\indices{_c^e}(\xi_d)T_{abe,A}+\sum_{i=1}^{r}\omega\indices{_{a_i}^e}(\xi_d)T_{abc,a_1\ldots a_{i-1}ea_{i+1}\ldots a_r},\quad A=(a_1\ldots a_r).
\end{equation*}
Taking into  account \eqref{covarder} and the above notation, it is not difficult to see that
\begin{equation}\label{covtor}
T_{abc,A}=\xi_AT_{abc}-\sum_{i=1}^r\xi_{A_i}(P_{abc,B_i}),
\end{equation}
where  $A_i:=(a_{i+1}\ldots a_r),\quad i=0,\ldots,r-1,\quad A_r:=\emptyset,\quad B_i:=(a_1\ldots a_i),\quad i=1,\ldots,r,\quad B_0:=\emptyset.$

The next step is to take in \eqref{covtor} the homogeneous parts of order $0$. At first, we shall prove that
\begin{equation}\label{weightP}
w\big(P_{abc,B_i(k-o(B_i)-o(bc)+o(a))}\big)=k,\quad i=1,\ldots,r,\quad k\leq m+1.
\end{equation}
We have
\begin{equation}\label{firstterm}
[\omega\indices{_a^d}(\xi_{a_i})T_{dbc,B_{i-1}}]_{(k-o(B_i)-o(bc)+o(a))}=\sum_{k_1,k_2}[\omega\indices{_a^d}(\xi_{a_i})]_{(k_1)}[T_{dbc,B_{i-1}}]_{(k_2)},
\end{equation}
 where $k_1, k_2$ satisfy  $k_1+k_2=k-o(B_i)-o(bc)+o(a).$
Since $k_1, k_2\geq 0,$ we get $k_1\leq k-o(B_i)-o(bc)+o(a)$ and $k_2\leq k-o(B_i)-o(bc)+o(a),$ i.e. we obtain the conditions
\begin{equation}\label{condind1}
k_1\leq m+1-o(B_i)-o(bc)+o(a),\quad k_2\leq m+1-o(B_i)-o(bc)+o(a).
\end{equation}
 To investigate the weight of the term $[\omega\indices{_a^d}(\xi_{a_i})]_{(k_1)},$ we decompose it in the sum
\begin{equation}\label{condind2}
[\omega\indices{_a^d}(\xi_{a_i})]_{(k_1)}=\sum_{l_1,l_2}\omega_{a(l_1)}^{\phantom{c}d}\xi_{a_i(l_2)},
\end{equation}
where $l_1$ and $l_2$ satisfy $l_1+l_2=k_1,\quad l_1\geq 2,\quad l_2\geq-o(a_i).$ These conditions and the first inequality in \eqref{condind1} give  $l_1=k_1-l_2\leq m+1-o(B_{i-1})-o(bc)+o(a)\leq m.$ The last inequality does not hold only in the trivial case when $B_{i-1}=\emptyset,\quad o(bc)=2,\quad o(a)=2.$ But this situation occurs only if $a\in\{4n+1,4n+2,4n+3\},\quad b,c\in\{1,\ldots,4n\},$ which implies $k_1,k_2\leq m$ and we can apply the inductive hypothesis to the terms that appears in the right-hand side of \eqref{firstterm} to obtain that the term in the left-hand side of \eqref{firstterm} has weight $k$. So, we can apply the inductive hypothesis for $\omega\indices{_a^d}$ to conclude that
\begin{equation}\label{weightom}
w(\omega_{a(l_1)}^{\phantom{c}d})=l_1.
\end{equation}
Moreover, the first inequality in \eqref{condind1} and the inequality $l_1\geq 2$ gives  $l_2=k_1-l_1\leq m-2,$ which allows  to apply the inductive hypothesis for $\xi_{a_i},$ i.e.
\begin{equation}\label{weightxi}
w(\xi_{a_i(l_2)})=l_2+o(a_i).
\end{equation}
Using \eqref{weightom}, \eqref{weightxi} and \eqref{condind2}, we conclude
\begin{equation}\label{weighox}
w\{[\omega\indices{_a^d}(\xi_{a_i})]_{(k_1)}\}=k_1+o(a_i).
\end{equation}
Furthermore, the second inequality in \eqref{condind1} implies $k_2+o(B_{i-1})+o(bc)-o(a)\leq m$ which together with the inductive hypothesis for the torsion yield
\begin{equation}\label{weightor}
w[(T_{dbc,B_{i-1}})_{(k_2)}]=k_2+o(B_{i-1})+o(bc)-o(d).
\end{equation}
Finally, we get from \eqref{firstterm}, \eqref{weighox} and \eqref{weightor} that $[\omega\indices{_a^d}(\xi_{a_i})T_{dbc,B_{i-1}}]_{(k-o(B_i)-o(bc)+o(a))}$ has weight $k.$ We obtain in the same manner that the homogeneous parts of order $k-o(B_i)-o(bc)+o(a)$ of the other terms in the definition of $P_{abc,B_i}$ all have weight $k,$ which proofs \eqref{weightP}.

To calculate the weight of $\xi_AT_{abc}|_q$ we consider $[\xi_{A_i}(P_{abc,B_i})]_{(0)},\quad i=1,\ldots,r,$ with
%Now, we are interesting in the weight of $[\xi_{A_i}(P_{abc,B_i})]_{(0)},\quad i=1,\ldots,r,$ in order to calculate the weight of $\xi_AT_{abc}|_q.$ We have
the decomposition:
\begin{multline}\label{new1}
[\xi_{A_i}(P_{abc,B_i})]_{(0)}=\sum_{k_0+k_{i+1}+\cdots+k_r=0}\xi_{a_r(k_r)}\ldots\xi_{a_{i+1}(k_{i+1})}(P_{abc,B_i})_{(k_o)}\\=\sum_{k_0+k_{i+1}+\cdots+k_r=0}\xi_{a_r(k_r+o(a_r)-o(a_r))}\ldots\xi_{a_{i+1}(k_{i+1}+o(a_{i+1})-o(a_{i+1}))}\times\\\times(P_{abc,B_i})_{(k_0-o(B_i)-o(bc)+o(a)+o(B_i)+o(bc)-o(a))}.
\end{multline}
The inequalities $0\leq k_{i+1}+o(a_{i+1})+\cdots+k_r+o(a_r)<o(A)\leq m+1$ imply $k_{i+1}+o(a_{i+1})\leq m,\ldots,k_r+o(a_r)\leq m,\quad k_0+o(B_i)+o(bc)-o(a)\leq o(A)+o(bc)-o(a)=m+1$
which  allow to apply the inductive hypothesis to the vector fields $\xi_{a_{i+1}},\ldots,\xi_{a_r}$
%  \eqref{weightP} to $P_{abc,B_i}$,
% which appear in the upper decomposition. So, we
and  deduce from \eqref{weightP} and \eqref{new1} that

 \begin{multline}\label{weightxip}
 w\{[\xi_{A_i}(P_{abc,B_i})]_{(0)}\}=k_r+o(a_r)+\cdots+k_{i+1}+o(a_{i+1})+k_0+o(B_i)+o(bc)-o(a)\\=o(A)+o(bc)-o(a)=m+1.
 \end{multline}
 Look at the homogeneous parts of order zero in \eqref{covtor} taking into account \eqref{weightxip} and Definition~\ref{weight} to get
 \begin{equation}\label{weightxiA}
 w(\xi_AT_{abc}|_q)=m+1,\quad\textrm{where}\quad o(A)=m+1-o(bc)+o(a).
 \end{equation}
We have consecutively:

 \begin{multline}\label{weighXAT}
 \xi_AT_{abc}|_q=(\xi_{a_r}\ldots\xi_{a_1}T_{abc})_{(0)}\\=\sum_{k_0+k_1+\cdots+k_r=0}\xi_{a_r(k_r)}\ldots\xi_{a_1(k_1)}T_{abc(k_0)}=\xi_{a_r(-o(a_r))}\ldots\xi_{a_1(-o(a_1))}T_{abc(o(A))}\\+\sum_{\substack{k_0+k_1+\cdots+k_r=0\\k_0<o(A)}}\xi_{a_r(k_r)}\ldots\xi_{a_1(k_1)}T_{abc(k_0)}=X_{a_r}\ldots X_{a_1}T_{abc(o(A))}+\sum_{\substack{k_0+k_1+\cdots+k_r=0\\k_0<o(A)}}\xi_{a_r(k_r)}\ldots\xi_{a_1(k_1)}T_{abc(k_0)}\\=X_AT_{abc}|_q+\sum_{\substack{k_0+k_1+\cdots+k_r=0\\k_0<o(A)}}\xi_{a_r(k_r+o(a_r)-o(a_r))}\ldots\xi_{a_1(k_1+o(a_1)-o(a_1))}T_{abc(k_0)}.
 \end{multline}
We are interesting in the weight of the summands at the right-hand side of \eqref{weighXAT}. We may take  $k_0>0$ since the corresponding term is zero when $k_0=0$.
Hence $k_1+o(a_1)+\cdots+k_r+o(a_r)<o(A)=m+1-o(bc)+o(a)\leq m+1,$ i.e. $k_1+o(a_1)\leq m,\ldots,k_r+o(a_r)\leq m$ and  the inductive hypothesis applied  to the vector fields $\xi_{a_i}, i=1,\ldots,r,$  gives
\begin{equation}\label{wxi}
w(\xi_{a_i(k_i)})=k_i+o(a_i),\quad i=1,\ldots,r.
\end{equation}
Moreover, since $k_0<o(A)\leq m+1,$  the inductive hypothesis for $T_{abc}$ yields
\begin{equation}\label{wtabc}
w(T_{abc(k_0)})=k_0+o(bc)-o(a).
\end{equation}
Finally, we apply \eqref{weightxiA}, \eqref{wxi} and \eqref{wtabc} to  \eqref{weighXAT} to get
 \begin{equation}\label{weightXA}
 w(X_AT_{abc}|_q)=m+1,\quad\textrm{where}\quad o(A)=m+1-o(bc)+o(a)
 \end{equation} which completes the proof that $T_{abc(m+1-o(bc)+o(a))}$ has weight $m+1.$

The proof of the fact that $R_{abcd(m+1-o(ab))}$ has weight $m+1$ is similar and we omit it.

It follows from Proposition~\ref{recurconn} and the just proved  two results for the torsion and the curvature that $\eta^i_{(k+2)},\quad d\eta^i_{(k+2)},\quad\theta^{\alpha}_{(k+1)},\quad \omega_{a(k)}^{\phantom{c}b}$ have weight $k$ for $k=m+1.$

Now, we  check that $\xi_{a(k-o(a))}$ and $s_{a(k+o(b)-o(a))}^b$ have weight $k$ for $k=m+1.$ We have
\begin{multline}\label{weightxis}
\xi_{a(m+1-o(a))}\\=(s_a^bX_b)_{(m+1-o(a))}=s_{a(m+1-o(a)+o(b))}^bX_b=-\sum_{i\geq 2}s_{a(m+1+o(c)-o(a)-i)}^c\theta_{(o(b)+i)}^b(X_c)X_b,
\end{multline}
where we used Lemma~\ref{leftinvhom} and \eqref{coorrec} for the second and the third equality, respectively. It is clear that
\begin{equation}\label{weigs}
w(s_{a(m+1+o(c)-o(a)-i)}^c)=m+1-i,
\end{equation}
because  $m+1-i\leq m-1$ and the inductive hypothesis for $s_a^c.$
On the other hand, $m+1+o(c)-o(a)-i\geq 0,$ which implies $i\leq
m+1.$ (The case $i=m+2$  occurs only when $i=m+1+o(c)-o(a),\quad
o(c)=2,\quad o(a)=1,$ in which situation $s_{a(0)}^c=0$ and the
corresponding term in the sum of the right-hand side of
\eqref{weightxis} is zero.) Proposition~\ref{recurconn} and the
inductive hypothesis imply $w(\theta_{(o(b)+i)}^b)=i,$ which,
together with \eqref{weigs} and \eqref{weightxis} give
$w(\xi_{a(m+1-o(a))})=w(s_{a(m+1+o(b)-o(a))}^b)=m+1.$ This
completes the induction in the case $A=\emptyset.$

In order to finish the proof that all the objects in \eqref{objectsrec} have weight $k,$ we have to show that $T_{abc,A(k-o(A)-o(bc)+o(a))}$ and $R_{abcd,A(k-o(A)-o(ab))}$ have weight $k$. We shall do it for the torsion by a new induction on $\#A=s,\quad A=(a_1,\ldots,a_s).$ For the curvature the proof is the same.

\emph{a) Base of the induction: $s=1$.} Using  the results  for $\xi_{a_1}, T_{dbc}$, $\omega\indices{_a^d}$,  it is easy to see that
%\begin{equation}\label{base}
%T_{abc,a_1}=\xi_{a_1}T_{abc}-\omega\indices{_a^d}(\xi_{a_1})T_{dbc}-\omega\indices{_b^d}(\xi_{a_1})T_{adc}-\omega\indices{_c^d}(\xi_{a_1})T_{abd}.
%\end{equation}
%It is easy to see that
\begin{equation}\label{base1}
w[(\xi_{a_1}T_{abc})_{(l)}]=w\{[\omega\indices{_a^d}(\xi_{a_1})T_{dbc}]_{(l)}\}=l+o(a_1)+o(bc)-o(a).
\end{equation}
%using the results we already have for $\xi_{a_1}, T_{dbc}$ and $\omega\indices{_a^d}.$
We get from  \eqref{base1} and  \eqref{covarder}  that $T_{abc,a_1(l)}$ has weight $l+o(a_1)+o(bc)-o(a),$ which finishes the base of the induction.

\emph{b) Inductive step.} Suppose  $w[(T_{abc,A})_{(l)}]=l+o(A)+o(bc)-o(a)$ for any multi-index $A: \#A=s, A=(a_1,\ldots,a_s).$ Using the inductive hypothesis and the results we already have for $\xi_a$ and $\omega\indices{_a^b}$ we obtain% it is not difficult to obtain
%The next identity holds by \eqref{covarder}:
%\begin{equation}\label{step}
%T_{abc,Ad}=\xi_d(T_{abc,A})-\omega\indices{_a^e}(\xi_d)T_{ebc,A}-\omega\indices{_b^e}(\xi_d)T_{aec,A}-\omega\indices{_c^e}
%(\xi_d)T_{abe,A}-\sum_{i=1}^s\omega\indices{_{a_i}^e}(\xi_d)T_{abc,a_1\ldots a_{i-1}e a_{i+1}\ldots a_s}.
%\end{equation}
%it is not difficult to see
%, using the inductive hypothesis and the results we already have for $\xi_a$ and $\omega\indices{_a^b}$ that
\begin{equation*}\label{step1}
w[(\xi_dT_{abc,A})_{(l)}]=w\{[\omega\indices{_a^e}(\xi_d)T_{ebc,A}]_{(l)}\}=l+o(Ad)+o(bc)-o(a)
\end{equation*}
%We get from the upper equalities and \eqref{step} that
which together with the identity $T_{abc,Ad}=\xi_d(T_{abc,A})-P_{abc,Ad}$ show that $(T_{abc,Ad})_{(l)}$ has weight $l+o(Ad)+o(bc)-o(a)$.
The induction is completed.

Finally,  $S_{(m)}$ has weight $m+2$ since $S=g^{\alpha\beta}g^{\gamma\delta}R_{\alpha\gamma\delta\beta}$ which ends the proof of the Lemma.
\end{proof}
The next
%following auxiliary
result establishes   a relation between the  volume forms on $(M,\eta,g,Q)$ and
%the volume form on the quaternionic Heisenberg group
on $\mathbf{G}(\mathbb{H})$.
%This will allow us to reduce the integration on the manifold to the integration on the Heisenberg group, as we see later.
\begin{lemma}\label{Conectvol}
Let $Vol_{\eta}=\frac{1}{4^n}\eta^1\wedge\eta^2\wedge\eta^3\wedge(d\eta^1)^{2n}$ and $Vol_{\Theta}=\frac{1}{4^n}\Theta^1\wedge\Theta^2\wedge\Theta^3\wedge(d\Theta^1)^{2n}$
be the natural volume forms on the qc manifold $(M,\eta,g Q)$ and the quaternionic Heisenberg group $\mathbf{G}(\mathbb{H}),$ respectively. T
hen $$Vol_{\eta}=[1+v_2+v_3+v_4+O(\rho^5)]Vol_\Theta,$$where $v_s$ is a homogeneous polynomial of degree $s$ and weight $s, s=1,2,3,4,$ and $O(\rho^5)$ is a function in $\mathcal{O}_{(5)}.$
\end{lemma}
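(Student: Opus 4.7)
The plan is to expand the top form $\eta^1\wedge\eta^2\wedge\eta^3\wedge(d\eta^1)^{2n}$ using the parabolic Taylor series of Proposition~\ref{recurconn}, identify $Vol_\Theta$ as the leading term, and track both the parabolic degree and the weight of the correction terms with the aid of Lemma~\ref{Cruclem}.

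First, I would write $\eta^i=\Theta^i+\sum_{m\ge 4}\eta^i_{(m)}$ and $d\eta^i=d\Theta^i+\sum_{m\ge 4}d\eta^i_{(m)}$, using $\eta^i_{(2)}=\Theta^i$ and $\eta^i_{(3)}=0$ from Proposition~\ref{recurconn}. Substituting into $Vol_\eta=\frac{1}{4^n}\eta^1\wedge\eta^2\wedge\eta^3\wedge(d\eta^1)^{2n}$ and distributing the wedge products, the ``unperturbed'' summand in which every factor is kept at its leading piece is precisely $\frac{1}{4^n}\Theta^1\wedge\Theta^2\wedge\Theta^3\wedge(d\Theta^1)^{2n}=Vol_\Theta$. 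Every other summand is a wedge product in which at least one of the $3+2n$ factors has been replaced by some $\eta^i_{(m)}$ or $d\eta^i_{(m)}$ with $m\ge 4$, and such a replacement raises the parabolic homogeneity of the summand by $m-2\ge 2$ beyond that of $Vol_\Theta$.

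Second, since $Vol_\Theta$ is a nowhere-vanishing top form, the ratio $J:=Vol_\eta/Vol_\Theta$ is a well-defined smooth function on a neighborhood of $q$ with $J(q)=1$. Grouping the summands of the expansion by total extra parabolic degree, I would write $J=1+\sum_{s\ge 2}v_s$, where $v_s$ is homogeneous of parabolic degree $s$ in $(x^\alpha,t^i)$. The absence of a $v_1$-term follows at once from $\eta^i_{(3)}=0$.

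Third, to check $w(v_s)=s$ I would apply Lemma~\ref{Cruclem}: the components of $\eta^i_{(m)}$ and $d\eta^i_{(m)}$ in the left-invariant coframe $\{\Xi^\alpha,\Theta^j\}$ all have weight $m-2$, while $\Theta^i$, $d\Theta^i$, and hence $Vol_\Theta$, have weight $0$. Since weights add under component-wise multiplication (part c) of Definition~\ref{weight}) and are unchanged by dividing by a weight $0$ factor, a correction term built from replacements of orders $m_1,\dots,m_k$ contributes to $J$ a homogeneous polynomial whose parabolic degree and weight are both equal to $\sum_{j=1}^k(m_j-2)$. Summing over all such contributions with $\sum(m_j-2)=s$ yields $v_s$ of weight $s$, as required. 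Finally, I would isolate $v_2,v_3,v_4$ explicitly and absorb the remainder (everything of parabolic degree $\ge 5$) into $O(\rho^5)\in\mathcal O_{(5)}$, obtaining the asserted expansion. The main bookkeeping obstacle is controlling the combinatorics of expanding the $2n$-fold power $(d\eta^1)^{2n}$ and making precise the claim that passing from a top-form identity to a scalar identity via division by $Vol_\Theta$ preserves the weight structure; both issues reduce to routine additivity once Lemma~\ref{Cruclem} is in hand.
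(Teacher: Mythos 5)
Your proposal is correct and follows essentially the same route as the paper: expand $\eta^i$ and $d\eta^1$ into parabolic homogeneous parts via Proposition~\ref{recurconn} (using $\eta^i_{(3)}=0$ to rule out a $v_1$ term), control the weight of each correction with Lemma~\ref{Cruclem}, and collect the summands by total extra parabolic order. The paper merely writes out the coframe decompositions of $\eta^i_{(m)}$ and $d\eta^i_{(m)}$ more explicitly before asserting the same additivity of degree and weight that you invoke.
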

\begin{proof}
According to Proposition~\ref{recurconn}, Lemma~\ref{leftinvhom} and Lemma~\ref{Cruclem}, we have the following  decomposition of $\eta^i, i=1,2,3,$ with respect to the base $\{\Theta^i,dx^{\alpha}\}$ of $\Lambda^1(TM)$ %according to Proposition~\ref{recurconn}, Lemma~\ref{leftinvhom} and Lemma~\ref{Cruclem}:
\begin{multline}\label{decompeta}
\eta^i=\eta_{(2)}^i+\eta_{(4)}^i+\eta_{(5)}^i+\cdots=\Theta^i+\underbrace{P_{22j}^i\Theta^j+P_{32\alpha}^idx^{\alpha}}_{\eta_{(4)}^i}+\underbrace{P_{33j}^i\Theta^j+P_{43\alpha}^idx^{\alpha}}_{\eta_{(5)}^i}+\cdots\\=\Theta^i+[P_{221}^i+P_{331}^i+P_{441}^i+O(\rho^5)]\Theta^1+[P_{222}^i+P_{332}^i+P_{442}^i+O(\rho^5)]\Theta^2\\+[P_{223}^i+P_{333}^i+P_{443}^i+O(\rho^5)]\Theta^3+[P_{32\alpha}^i+P_{43\alpha}^i+O(\rho^5)]dx^{\alpha},
\end{multline}
where $P_{stu}^i$ is a homogeneous polynomial in $\{x^{\alpha},t^i\}$ of degree $s$ and weight $t.$

Similarly, using again Proposition~\ref{recurconn}, Lemma~\ref{leftinvhom} and Lemma~\ref{Cruclem}, we get the following  representation of $d\eta^i, i=1,2,3,$ with respect to the base $\{\Theta^i\wedge\Theta^j,\Theta^i\wedge dx^{\alpha},dx^{\alpha}\wedge dx^{\beta}\}$ of $\Lambda^2(TM)$
%using again Proposition~\ref{recurconn}, Lemma~\ref{leftinvhom} and Lemma~\ref{Cruclem}:
\begin{multline}\label{decompdeta}
d\eta^i=d\Theta^i+\underbrace{P_{02jk}^i\Theta^j\wedge\Theta^k+P_{12j\alpha}^i\Theta^j\wedge dx^{\alpha}+P_{22\alpha\beta}^idx^{\alpha}\wedge dx^{\beta}}_{d\eta_{(4)}^i}\\+\underbrace{P_{13jk}^i\Theta^j\wedge\Theta^k+P_{23j\alpha}^i\Theta^j\wedge dx^{\alpha}+P_{33\alpha\beta}^idx^{\alpha}\wedge dx^{\beta}}_{d\eta_{(5)}^i}+\cdots\\=d\Theta^i+[P_{02jk}^i+P_{13jk}^i+P_{24jk}^i+P_{35jk}^i+P_{46jk}^i+O(\rho^5)]\Theta^j\wedge\Theta^k+[P_{12j\alpha}^i+P_{23j\alpha}^i+P_{34j\alpha}^i+P_{45j\alpha}^i+O(\rho^5)]\Theta^j\wedge dx^{\alpha}\\+[P_{22\alpha\beta}^i+P_{33\alpha\beta}^i+P_{44\alpha\beta}^i+O(\rho^5)]dx^{\alpha}\wedge dx^{\beta},
\end{multline}
where $P_{stuv}^i$ is a homogeneous polynomial in $\{x^{\alpha},t^i\}$ of degree $s$ and weight $t.$

It is not difficult to see, using \eqref{decompeta} and \eqref{decompdeta}, that
\begin{equation*}
\eta^1\wedge\eta^2\wedge\eta^3\wedge(d\eta^1)^{2n}=[1+v_2+v_3+v_4+O(\rho^5)]\Theta^1\wedge\Theta^2\wedge\Theta^3\wedge(d\Theta^1)^{2n},
\end{equation*}
where $v_s$ is a homogeneous polynomial of degree $s$ and weight $s, s=1,2,3,4,$ which proves the lemma.
\end{proof}
The next essential step is to find an asymptotic expression of the qc Yamabe functional \eqref{Yamabefunc1} over a special  set of "test functions".
% without an explicit calculation of the constants. %, i.e. we want to know, loosely speaking, "how it looks %asymptotically".
%The constants  will be calculated further in Section~\ref{explicit}. % At first we shall give some %preliminaries and definitions.

It is shown in  \cite{IMV,IMV2,IMV4} that the function
$F:=[(1+|p|^2)^2+|w|^2]^{-(n+1)}$ is an extremal for the qc Yamabe
functional $\Upsilon_{\Theta}$ on the quaternionic Heisenberg
group $\mathbf{G}(\mathbb{H})$. Here
$|p|^2:=(x^1)^2+\cdots+(x^{4n})^2,\quad
|w|^2:=(t^1)^2+(t^2)^2+(t^3)^2$. The function
$F^{\varepsilon,\lambda}:=\varepsilon^{\lambda}\delta_{1/{\varepsilon}}^*F,\quad
\varepsilon, \lambda\in\mathbb{R},\quad\varepsilon>0,$ is also an
extremal. Let us choose $\lambda=-2(n+1)$ and denote
$F^{\varepsilon}:=F^{\varepsilon,-2(n+1)}=\varepsilon^{2(n+1)}[(\varepsilon^2+|p|^2)^2+|w|^2]^{-(n+1)}.$
It is not difficult to see that the integral
$\int_{\mathbf{G}(\mathbb{H})}(F^{\varepsilon})^{2^*}\,Vol_{\Theta}$
is a constant independent of $\varepsilon$. Indeed, we have
consecutively:

\begin{multline*}
\int_{\mathbf{G}(\mathbb{H})}(F^{\varepsilon})^{2^*}\,Vol_{\Theta}=\int_{\mathbf{G}(\mathbb{H})}\varepsilon^{-4n-6}(\delta_{1/{\varepsilon}}^*F)^{2^*}\,Vol_{\Theta}=\int_{\mathbf{G}(\mathbb{H})}(\delta_{1/{\varepsilon}}^*F)^{2^*}\delta_{1/{\varepsilon}}^*\,(Vol_{\Theta})\\=\int_{\mathbf{G}(\mathbb{H})}\delta_{1/{\varepsilon}}^*(F^{2^*}\,Vol_{\Theta})=\int_{\mathbf{G}(\mathbb{H})}F^{2^*}\,Vol_{\Theta}.
\end{multline*}
%and the assertion is checked.
\emph{The natural distance function} on  $\mathbf{G}(\mathbb{H})$ is defined in the coordinates $\{x^{\alpha},t^i\}$ by $\rho:=\sqrt[4]{|p|^4+|t|^2}=\sqrt[4]{[(x^1)^2+\cdots+(x^{4n})^2]^2+(t^1)^2+(t^2)^2+(t^3)^2}.$ \emph{The polar change} of the coordinates $\{x^{\alpha},t^i\}$ is defined by %the equalities
\begin{equation}\label{polarch}
\begin{aligned}
&x^1=\rho\xi^1,\ldots,x^{4n}=\rho\xi^{4n}, t^1=\rho^2\tau^1, t^2=\rho^2\tau^2, t^3=\rho^2\tau^3,\quad \textrm{where}\\
&[(\xi^1)^2+\cdots+(\xi^{4n})^2]^2+(\tau^1)^2+(\tau^2)^2+(\tau^3)^2=1.
\end{aligned}
\end{equation}
Let $\{x^{\alpha},t^i\}$ be  qc parabolic normal coordinates near
$q\in M$ for a contact form $\eta$. We may assume the definition
area to be $\{\rho<2k\}$ for a suitable $k>0.$ Following
\cite{JL}, we define the function $\psi\in Co^{\infty}(M)$ with
compact support in the set $\{\rho<2k\}$ and $\psi\equiv 1$ in the
set $\{\rho<k\}.$ The "test function" $f^{\varepsilon}$ is defined
by $f^{\varepsilon}:=\psi F^{\varepsilon}, \varepsilon>0,$ on the
set $\{\rho<2k\}.$ The main result of this section is contained in
the following
\begin{prop}\label{firstexp}
For the qc Yamabe functional \eqref{Yamabefunc1} over the test functions $f^{\varepsilon}$ the next asymptotic expression holds
\begin{equation}\label{expan}
\Upsilon_{\eta}(f^{\varepsilon})=\frac{b_0(n)+b_4(n)||\W||^2\varepsilon^4+O(\varepsilon^5)}{\big(a_0(n)+a_4(n)||\W||^2\varepsilon^4+O(\varepsilon^5)\big)^{2/2^*}},
\end{equation}
where $a_0(n), a_4(n), b_0(n)$ and $b_4(n)$ are some independent of the qc structure dimensional constants.
\end{prop}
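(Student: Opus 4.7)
The plan is to expand separately the numerator
\[
N(\varepsilon):=\int_M\Bigl(4\tfrac{n+2}{n+1}|\nabla f^{\varepsilon}|_\eta^2+S(f^{\varepsilon})^2\Bigr)\,Vol_{\eta}
\]
and the pre-denominator $D(\varepsilon):=\int_M(f^{\varepsilon})^{2^*}\,Vol_{\eta}$ in powers of $\varepsilon$, and then form $\Upsilon_{\eta}(f^{\varepsilon})=N(\varepsilon)/D(\varepsilon)^{2/2^*}$. As a preliminary reduction, the pointwise bounds $F^{\varepsilon}\le C\varepsilon^{2(n+1)}\rho^{-4(n+1)}$ (with one extra $\rho$-order of decay for the horizontal derivatives) show that the annulus $\{k\le\rho\le 2k\}$ on which $\psi$ differs from $1$ contributes terms of order $O(\varepsilon^{4(n+1)})$ to each of the three integrals, which is smaller than $O(\varepsilon^5)$ for every $n\ge 1$. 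Hence modulo $O(\varepsilon^5)$ we may drop the cutoff and integrate $F^{\varepsilon}$, $|\nabla F^{\varepsilon}|_\eta^2$ and $S(F^{\varepsilon})^2$ over all of $\QH$ expressed in the qc parabolic normal coordinates.

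Next I would apply the polar change \eqref{polarch} followed by the rescaling $\rho=\varepsilon r$, so that $x^\alpha=\varepsilon r\xi^\alpha$, $t^i=\varepsilon^2r^2\tau^i$ and
\[
F^{\varepsilon}=\varepsilon^{-2(n+1)}\bigl[(1+r^2|\xi|^2)^2+r^4|\tau|^2\bigr]^{-(n+1)},
\]
an $\varepsilon$-independent function of $(r,\xi,\tau)$. Expand each geometric factor in the integrands: the horizontal frame $\xi_a$ via Proposition \ref{Leftinv} and Lemma \ref{decompxi} as $\xi_a=X_a+\sum_{m\ge 1}s^{\,b}_{a(m)}X_b$; the horizontal metric $g^{\alpha\beta}$ by its Taylor series at $q$ with coefficients built from the curvature/torsion; the scalar curvature $S$ which, by Theorem \ref{mainK}, has vanishing zeroth and first-order parts at $q$; and the volume form via Lemma \ref{Conectvol}. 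Under the rescaling each monomial of parabolic degree $m$ in $\{x^\alpha,t^i\}$ carries a factor $\varepsilon^m$, so the coefficient of $\varepsilon^m$ in $N$ and $D$ is a sum of angular integrals over the parabolic sphere $\{|\xi|^4+|\tau|^2=1\}$ whose integrands are polynomials in $(\xi,\tau)$ times $F$ and its derivatives, paired with tensorial coefficients at $q$ of total weight $m$ in the sense of Definition \ref{weight}.

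The crux is identifying which of these contributions survive. Since the flat-model piece $F$ depends on $(\xi,\tau)$ only through $|\xi|^2$ and $|\tau|^2$, the angular integrand is invariant under each reflection $\xi^\alpha\mapsto -\xi^\alpha$ and $\tau^i\mapsto -\tau^i$, so any polynomial having an odd power of some coordinate integrates to zero; together with the rotational symmetry of the angular measure, only fully contracted tensorial quantities at $q$ can appear, i.e.\ scalar invariants of weight $m$ in the sense of Definition \ref{weight}. By Theorem \ref{normqc} the only nontrivial scalar polynomial invariants of weight $\le 4$ at the origin of qc parabolic normal coordinates are the dimensional constants and $\|\W\|^2$. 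Weights $1$ and $3$ admit no nontrivial invariants (the entries of the corresponding row of Kunkel's table either vanish identically, vanish at $q$ by Theorem \ref{mainK}, or pair with an odd factor of $(\xi,\tau)$), while at weight $2$ every invariant listed there vanishes at $q$ by Theorem \ref{mainK}. Consequently $N(\varepsilon)=b_0(n)+b_4(n)\|\W(q)\|^2\varepsilon^4+O(\varepsilon^5)$, and likewise $D(\varepsilon)=a_0(n)+a_4(n)\|\W(q)\|^2\varepsilon^4+O(\varepsilon^5)$, which yields \eqref{expan}. The main obstacle will be the weight-$4$ bookkeeping: tracking all the scalar combinations of curvature, torsion and their covariant derivatives that in principle arise from the three integrals, and invoking the identities from \cite{IMV,IV} together with the vanishing relations of Theorem \ref{mainK} to collapse them to a single multiple of $\|\W\|^2$ plus universal constants.
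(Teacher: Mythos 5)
Your proposal is correct and follows essentially the same route as the paper: split the functional into the three integrals, discard the cutoff region and the tails at the cost of $O(\veps^5)$, rescale by the parabolic dilations, organize the expansions of the frame, volume form and scalar curvature by homogeneous order and weight (Lemmas~\ref{decompxi}, \ref{Cruclem}, \ref{Conectvol}), and invoke Theorem~\ref{normqc} to conclude that only the weight-$0$ constants and the weight-$4$ multiple of $\|\W\|^2$ survive. The only difference worth noting is that the paper pushes the $S(f^\veps)^2$ term further, showing via $S_{,\alpha}{}^{\alpha}(q)=0$ from Theorem~\ref{mainK} that it is entirely $O(\veps^5)$, whereas you let it contribute to $b_4(n)$ --- which is still consistent with the statement being proved.
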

\begin{proof}
We shall examine separately each of the integrals that appear in the expression %of the qc Yamabe functional \eqref{Yamabefunc1}.
\begin{equation}\label{Yamabefunc2A}
\Upsilon_{\eta}(f^{\varepsilon})=\frac{\int_M[4\frac{n+2}{n+1}|\nabla f^{\varepsilon}|_{\eta}^2+S(f^{\varepsilon})^2]\,Vol_{\eta}}{[\int_M(f^{\varepsilon})^{2^*}\,Vol_{\eta}]^{2/2^*}}.
\end{equation}

\emph{A)} We begin with $\int_M(f^{\varepsilon})^{2^*}\,Vol_{\eta}.$

If $\varphi$ is an integrable function on $\mathbf{G}(\mathbb{H})$ and $|\varphi|\leq C\Phi(\rho)$ for a constant $C$ and an integrable function $\Phi$ on $\mathbf{G}(\mathbb{H})$ that depends only on $\rho$ then the next formula holds
\begin{equation}\label{intO}
\int_{a<\rho<b}\varphi\,Vol_{\Theta}=O\big(\int_{a}^b\Phi(\rho)\rho^{4n+5}d\rho\big).
\end{equation}
Indeed, the polar change \eqref{polarch} of the coordinates in the volume form on $\mathbf{G}(\mathbb{H})$ yields  $\,Vol_{\Theta}=\rho^{4n+5}d\rho\wedge d\sigma,$ where $d\sigma$ is a $4n+2$--form  that depends only on $\xi^1,\ldots,\xi^{4n},\tau^1,\tau^2,\tau^3$ and \eqref{intO} follows.
We also have %Indeed, the upper inequality follows directly by the next relations:
\begin{multline*}
F^{-\frac1{n+1}}=(1+|p|^2)^2+|w|^2=[1+(x^1)^2+\cdots+(x^{4n})^2]^2+(t^1)^2+(t^2)^2+(t^3)^2\\\geq1+\rho^4[(\xi^1)^2+\cdots+(\xi^{4n})^2]^2+\rho^4[(\tau^1)^2+(\tau^2)^2+(\tau^3)^2]=1+\rho^4\geq\tilde{C}(1+\rho)^4
\end{multline*}
for some positive constant $\tilde{C}$ which implies
\begin{equation}\label{modF}
|F|\leq C(1+\rho)^{-4(n+1)}.
\end{equation}
We get consecutively:

\begin{multline}\label{denominator}
\int_M(f^{\varepsilon})^{2^*}\,Vol_{\eta}=\int_{\rho<k}(F^{\varepsilon})^{2^*}[1+v_2+v_3+v_4+O(\rho^5)]\,Vol_{\Theta}\\+\int_{k<\rho<2k}\psi^{2^*}(F^{\varepsilon})^{2^*}[1+v_2+v_3+v_4+O(\rho^5)]\,Vol_{\Theta}\\=\int_{\rho<k/{\varepsilon}}(\delta_{\varepsilon}^*F^{\varepsilon})^{2^*}[1+{\varepsilon}^2v_2+\veps^3v_3+\veps^4v_4+O(\veps^5\rho^5)]\delta_\veps^*\,Vol_{\Theta}\\+\int_{k/{\veps}<\rho<2k/{\veps}}\delta_\veps^*\psi^{2^*}(\delta_\veps^*F^{\veps})^{2^*}[1+{\varepsilon}^2v_2+\veps^3v_3+\veps^4v_4+O(\veps^5\rho^5)]\delta_\veps^*\,Vol_{\Theta}\\=\int_{\rho<k/\veps}F^{2^*}[1+{\varepsilon}^2v_2+\veps^3v_3+\veps^4v_4+O(\veps^5\rho^5)]\,Vol_{\Theta}\\+\int_{k/\veps<\rho<2k/\veps}\delta_\veps^*\psi^{2^*}F^{2^*}[1+{\varepsilon}^2v_2+\veps^3v_3+\veps^4v_4+O(\veps^5\rho^5)]\,Vol_{\Theta}\\=\int_{\rho<k/\veps}F^{2^*}[1+{\varepsilon}^2v_2+\veps^3v_3+\veps^4v_4+O(\veps^5\rho^5)]\,Vol_{\Theta}+O\big(\int_{k/\veps<\rho<2k/\veps}F^{2^*}\,Vol_{\Theta}\big)\\=\int_{\mathbf{G}(\mathbb{H})}F^{2^*}(1+{\varepsilon}^2v_2+\veps^3v_3+\veps^4v_4)\,Vol_{\Theta}-\int_{\rho>k/\veps}F^{2^*}(1+{\varepsilon}^2v_2+\veps^3v_3+\veps^4v_4)\,Vol_{\Theta}+\int_{\rho<k/\veps}F^{2^*}O(\veps^5\rho^5)\,Vol_{\Theta}\\+O\big(\int_{k/\veps<\rho<2k/\veps}F^{2^*}\,Vol_{\Theta}\big)=\int_{\mathbf{G}(\mathbb{H})}F^{2^*}(1+{\varepsilon}^2v_2+\veps^3v_3+\veps^4v_4)\,Vol_{\Theta}\\+O\big(\int_{k/\veps}^{\infty}\sum_{i=0}^4\veps^i\rho^i(1+\rho)^{-4(2n+3)}\rho^{4n+5}d\rho\big)+O\big(\int_{0}^{k/\veps}\veps^5\rho^5(1+\rho)^{-4(2n+3)}\rho^{4n+5}d\rho\big)\\+O\big(\int_{k/\veps}^{2k/\veps}(1+\rho)^{-4(2n+3)}\rho^{4n+5}d\rho\big),
\end{multline}
where we used the definition of $f^{\veps}$ and
Lemma~\ref{Conectvol} for the first equality and the parabolic
dilation change of the variables for the second one.  The third
identity follows  from the definition of $F^{\veps}$. To get the
fourth one, we used the next chain of relations:
\begin{multline*}
\big|\int_{k/\veps<\rho<2k/\veps}\delta_\veps^*\psi^{2^*}F^{2^*}[1+{\varepsilon}^2v_2+\veps^3v_3+\veps^4v_4+O(\veps^5\rho^5)]\,Vol_{\Theta}\big|\\\leq\int_{k/\veps<\rho<2k/\veps}C_0F^{2^*}|1+{\varepsilon}^2v_2+\veps^3v_3+\veps^4v_4+O(\veps^5\rho^5)|\,Vol_{\Theta}\\\leq\int_{k/\veps<\rho<2k/\veps}C_0F^{2^*}[1+\veps^2\rho^2+\veps^3\rho^3+\veps^4\rho^4+O(\veps^5\rho^5)]\,Vol_{\Theta}\\\leq\int_{k/\veps<\rho<2k/\veps}C_0F^{2^*}[\underbrace{1+(2k)^2+(2k)^3+(2k)^4+O\big((2k)^5\big)}_{\textrm{convergent series}}]\,Vol_{\Theta}=C_1\int_{k/\veps<\rho<2k/\veps}F^{2^*}\,Vol_{\Theta},
\end{multline*}
where $C_0$ and $C_1$ are some suitable positive constants and $k$ is chosen sufficient small. Hence,
$$\int_{k/\veps<\rho<2k/\veps}\delta_\veps^*\psi^{2^*}F^{2^*}[1+{\varepsilon}^2v_2+\veps^3v_3+\veps^4v_4+O(\veps^5\rho^5)]\,Vol_{\Theta}=O\big(\int_{k/\veps<\rho<2k/\veps}F^{2^*}\,Vol_{\Theta}\big).$$
The fifth identity in \eqref{denominator} is clear  while the sixth one is obtained from \eqref{intO} and \eqref{modF}.

For the first term at the right-hand side of \eqref{denominator} we have $$\int_{\mathbf{G}(\mathbb{H})}F^{2^*}(\underbrace{1}_{=:v_0}+{\varepsilon}^2v_2+\veps^3v_3+\veps^4v_4)\,Vol_{\Theta}=c_0+c_2\veps^2+c_3\veps^3+c_4\veps^4,$$ where $c_i:=\int_{\mathbf{G}(\mathbb{H})}F^{2^*}v_i\,Vol_{\Theta}, i=0,2,3,4,$ is a quaternionic contact invariant scalar quantity of weight $i.$ It follows by Theorem~\ref{normqc} that $c_2=c_3=0$ and therefore
\begin{equation}\label{firsttermInt}
\int_{\mathbf{G}(\mathbb{H})}F^{2^*}(1+{\varepsilon}^2v_2+\veps^3v_3+\veps^4v_4)\,Vol_{\Theta}=a_0(n)+a_4(n)||\W||^2\veps^4,
\end{equation}
where $a_0(n)$ and $a_4(n)$ are some dimensional constants, independent on the qc structure.

Finally, we are interesting in the last three expressions at the right-hand side of \eqref{denominator}.
We have for $i=0,\ldots,4$ that $\rho^{4n+5+i}(1+\rho)^{-8n-12}\leq\rho^{-6}$ and consequently,
\begin{equation}\label{firstO}
\int_{k/\veps}^{\infty}\sum_{i=0}^4\veps^i\rho^i(1+\rho)^{-4(2n+3)}\rho^{4n+5}d\rho=O(\veps^5).
\end{equation}
In a similar way, we obtain
\begin{equation}\label{secondO}
\int_{0}^{k/\veps}\veps^5\rho^5(1+\rho)^{-4(2n+3)}\rho^{4n+5}d\rho=O(\veps^5), \quad
\int_{k/\veps}^{2k/\veps}(1+\rho)^{-4(2n+3)}\rho^{4n+5}d\rho=O(\veps^5).
\end{equation}
A substitution of \eqref{firsttermInt}, \eqref{firstO} and \eqref{secondO}  into \eqref{denominator} gives
\begin{equation}\label{denexp}
\int_M(f^{\veps})^{2^*}\,Vol_{\eta}=a_0(n)+a_4(n)||\W||^2\veps^4+O(\veps^5).
\end{equation}

\emph{B)} We continue with $\int_M|\nabla f^{\veps}|_{\eta}^2\,Vol_{\eta}.$
It follows directly by the decomposition of $\xi_a$ determined in Lemma~\ref{decompxi}, the definition of the function $f^{\veps}$ and Lemma~\ref{Conectvol} that
\begin{multline}\label{numerator1}
\int_M|\nabla f^{\veps}|_{\eta}^2\,Vol_{\eta}\\=\int_M\sum_{\alpha=1}^{4n}(\xi_{\alpha}f^{\veps})^2\,Vol_{\eta}=\int_{\rho<k}\sum_{\alpha=1}^{4n}s_\alpha^as_{\alpha}^b(X_aF^{\veps})(X_bF^{\veps})[1+v_2+v_3+v_4+O(\rho^5)]\,Vol_{\Theta}\\+\int_{k<\rho<2k}\sum_{\alpha=1}^{4n}s_{\alpha}^as_{\alpha}^b[(X_a\psi)F^{\veps}+\psi X_a(F^{\veps})][(X_b\psi)F^{\veps}+\psi X_b(F^{\veps})][1+v_2+v_3+v_4+O(\rho^5)]\,Vol_{\Theta}=:I_1+I_2.
\end{multline}
%Now, our goal is to find asymptotic expressions of the integrals $I_1$ and $I_2$ that appear in \eqref{numerator1}.
We begin with $I_1.$
At first, note that the following representation holds $$\sum_{\alpha=1}^{4n}s_{\alpha}^as_{\alpha}^b=\sum_{\alpha=1}^{4n}\sum_{m=0}^{\infty}\sum_{k_1+k_2=m}s_{\alpha(k_1+o(a)-1)}^as_{\alpha(k_2+o(b)-1)}^b=:\sum_{m=0}^{\infty}v_{m}^{ab},$$
where $v_{m}^{ab}, m=0,1,\ldots,\infty,$ is a homogeneous polynomial of degree $m+o(ab)-2$ and weight $m.$ We have
%A substitution of the upper equality in the integral $I_1$ that appears in \eqref{numerator1} gives
\begin{multline}\label{numerator1A}
I_1=\int_{\rho<k}(\sum_{m=0}^{\infty}v_{m}^{ab})(X_aF^\veps)(X_bF^\veps)[1+v_2+v_3+v_4+O(\rho^5)]\,Vol_{\Theta}\\=\int_{\rho<k}[w_0^{ab}+w_1^{ab}+w_2^{ab}+w_3^{ab}+w_4^{ab}+O(\rho^{3+o(ab)})](X_aF^\veps)(X_bF^\veps)\,Vol_{\Theta},
\end{multline}
where the homogeneous polynomial $w_i^{ab}, i=0,\ldots,4,$ is formed from the polynomials $v_m^{ab}$,  $v_j$ and is of degree $i+o(ab)-2$ and of weight $i.$
Moreover, it follows directly by the definition of $F^\veps$ that
\begin{equation}\label{deltaXa}
\delta_\veps^*(X_aF^\veps)=\veps^{-2(n+1)-o(a)}X_aF.
\end{equation}
Another fact we shall need is the inequality
\begin{equation}\label{XaFineq}
|X_aF|\leq {C (1+\rho)^{-4(n+1)-o(a)}},
\end{equation}
where $C$ is some positive constant.\footnote{We shall use again
the letters $C$ and $\tilde{C}$ to denote some (positive)
constants. We are doing this now and later in order to avoid the
excessive accumulation of different letters and indices.} In order
to check \eqref{XaFineq}, we suppose firstly that
$a\in\{1,\ldots,4n\}.$
%We obtain immediately in this situation,
Using the definitions of the function $F$ and the vector field $X_a,$ we obtain
\begin{multline*}
X_aF=-4(n+1)\big\{[1+(x^1)^2+\cdots+(x^{4n})^2]^2+(t^1)^2+(t^2)^2+(t^3)^2\big\}^{-n-2}\times\\\times\big\{[1+(x^1)^2+\cdots+(x^{4n})^2]x^a+I\indices{^1_{\beta
a}}x^\beta t^1+I\indices{^2_{\beta a}}x^\beta
t^2+I\indices{^3_{\beta a}}x^\beta t^3\big\}.
\end{multline*}
The polar change \eqref{polarch} in the above identity gives
$|X_aF|\leq\tilde{C}(1+\rho)^{-4(n+2)}(\rho+\rho^3)$ for some
positive constant $\tilde{C},$ yielding $|X_aF|\leq
C(1+\rho)^{-4n-5}=C(1+\rho)^{-4(n+1)-o(a)},$ where $C$ is  a
positive constant. Thus \eqref{XaFineq} is proved  for
$a\in\{1,\ldots,4n\}.$ The case  $a\in\{4n+1,4n+2,4n+3\}$ is
considered in a similar way.

We return to \eqref{numerator1A},  make the parabolic dilations change $\delta_{\veps}(x^{\alpha},t^i)=(\veps x^\alpha,\veps^2t^i)$  and use \eqref{deltaXa}  to get
\begin{multline}\label{numerator1AA}
I_1=\int_{\rho<k/\veps}\sum_{i=0}^4\veps^iw_i^{ab}(X_aF)(X_bF)\,Vol_{\Theta}+\int_{\rho<k/\veps}O(\veps^5\rho^{3+o(ab)})(X_aF)(X_bF)\,Vol_{\Theta}\\=\int_{\mathbf{G}(\mathbb{H})}\sum_{i=0}^4\veps^iw_i^{ab}(X_aF)(X_bF)\,Vol_{\Theta}-\int_{\rho>k/\veps}\sum_{i=0}^4\veps^iw_i^{ab}(X_aF)(X_bF)\,Vol_{\Theta}\\+\int_{\rho<k/\veps}O(\veps^5\rho^{3+o(ab)})(X_aF)(X_bF)\,Vol_{\Theta}.
\end{multline}
Now we shall examine the three integrals in the right-hand side of \eqref{numerator1AA}. We have
\begin{equation*}
\int_{\mathbf{G}(\mathbb{H})}\sum_{i=0}^4\veps^iw_i^{ab}(X_aF)(X_bF)\,Vol_{\Theta}=d_0+\veps d_1+\veps^2 d_2+\veps^3 d_3+\veps^4 d_4,
\end{equation*}
where $d_i:=\int_{\mathbf{G}(\mathbb{H})}w_i^{ab}(X_aF)(X_bF)\,Vol_{\Theta}, i=0,\ldots,4,$ is a quaternionic contact invariant scalar quantity of weight $i$. In the same manner as in \eqref{firsttermInt} we obtain  by Theorem~\ref{normqc}  that
\begin{equation}\label{numerator1AAA}
\int_{\mathbf{G}(\mathbb{H})}\sum_{i=0}^4\veps^iw_i^{ab}(X_aF)(X_bF)\,Vol_{\Theta}=\tilde{b}_0(n)+\tilde{b}_4(n)||\W||^2\veps^4,
\end{equation}
where $\tilde{b}_0(n)$ and $\tilde{b}_4(n)$ are some dimensional constants, independent of the qc structure.

We get using \eqref{intO} and \eqref{XaFineq} that
\begin{multline*}
\big|\int_{\rho>k/\veps}\sum_{i=0}^4\veps^iw_i^{ab}(X_aF)(X_bF)\,Vol_{\Theta}\big|\leq C\int_{\rho>k/\veps}\sum_{i=0}^4\veps^i\rho^{i+o(ab)-2}(1+\rho)^{-8(n+1)-o(ab)}\,Vol_{\Theta}\\=O\big[\sum_{i=0}^4\veps^i\int_{k/\veps}^\infty\big(\rho^{4n+3+i+o(ab)}(1+\rho)^{-8(n+1)-o(ab)}\big)d\rho\big]
\end{multline*}
which combined with the inequality $\rho^{4n+3+i+o(ab)}(1+\rho)^{-8(n+1)-o(ab)}\leq\rho^{-6+i}$ lead  to
\begin{equation}\label{numerator1AAB}
\int_{\rho>k/\veps}\sum_{i=0}^4\veps^iw_i^{ab}(X_aF)(X_bF)\,Vol_{\Theta}=O(\veps^5).
\end{equation}
To handle the last integral in the right-hand side of \eqref{numerator1AA}, we  use \eqref{XaFineq} to get
\begin{multline*}
\big|\int_{\rho<k/\veps}O(\veps^5\rho^{3+o(ab)})(X_aF)(X_bF)\,Vol_{\Theta}\big|\\\leq C\int_{\rho<k/\veps}\veps^5(\rho^{3+o(ab)}+\veps\rho^{4+o(ab)}+\cdots)(1+\rho)^{-8(n+1)-o(ab)}\,Vol_{\Theta},
\end{multline*}
which together with  $\rho^{3+o(ab)}+\veps\rho^{4+o(ab)}+\cdots\leq\tilde{C}\rho^{3+o(ab)}$ for $\rho<\frac{k}{\veps}$, $k$ sufficiently small, and \eqref{intO} give  %the equality

\begin{equation*}
\int_{\rho<k/\veps}O(\veps^5\rho^{3+o(ab)})(X_aF)(X_bF)\,Vol_{\Theta}=O\big[\veps^5\int_{0}^{k/\veps}\rho^{4n+8+o(ab)}(1+\rho)^{-8(n+1)-o(ab)}d\rho\big].
\end{equation*}
Now, it is easy to see after some standard analysis  that
\begin{equation}\label{numerator1AAC}
\int_{\rho<k/\veps}O(\veps^5\rho^{3+o(ab)})(X_aF)(X_bF)\,Vol_{\Theta}=O(\veps^5).
\end{equation}

We substitute \eqref{numerator1AAA}, \eqref{numerator1AAB} and \eqref{numerator1AAC} into \eqref{numerator1AA} to obtain
\begin{equation}\label{numerator1AALast}
I_1=\tilde{b}_0(n)+\tilde{b}_4(n)||\W||^2\veps^4+O(\veps^5).
\end{equation}

To manage  the integral $I_2$ in \eqref{numerator1}, we note  that for  $k<\rho<2k $ the following inequalities hold $|s_\alpha^a||s_\alpha^b|\leq M_{\alpha}^{ab}\quad\textrm{and}\quad |1+v_2+v_3+v_4+O(\rho^5)|\leq N$ for suitable constants $M_{\alpha}^{ab}, N$. Then we have
\begin{equation*}
|I_2|\leq\int_{k<\rho<2k}\sum_{\alpha=1}^{4n}C_{\alpha}^{ab}\big(|F^\veps|^2+|X_aF^\veps||F^\veps|+|X_bF^\veps||F^\veps|+|X_aF^\veps||X_bF^\veps|\big)\,Vol_{\Theta}
\end{equation*} for some positive constants $C_{\alpha}^{ab}$. The latter  yields
\begin{multline}\label{numerator1B}
I_2=O\big(\int_{k<\rho<2k}|F^\veps|^2\,Vol_{\Theta}\big)\\+O\big(\sum_{a=1}^{4n+3}\int_{k<\rho<2k}|X_aF^\veps||F^\veps|\,Vol_{\Theta}\big)+O\big(\sum_{a,b=1}^{4n+3}\int_{k<\rho<2k}|X_aF^\veps||X_bF^\veps|\,Vol_{\Theta}\big).
\end{multline}
We  examine the three integrals  in the right-hand side of \eqref{numerator1B}. We begin with
$$\int_{k<\rho<2k}|F^\veps|^2\,Vol_{\Theta}=\veps^2\int_{k/\veps<\rho<2k/\veps}|F|^2\,Vol_{\Theta}=\veps^2O\big(\int_{k/\veps}^{2k/\veps}(1+\rho)^{-8(n+1)}\rho^{4n+5}d\rho\big),$$
where we applied the parabolic dilation change of the variables to obtain the first equality,  \eqref{intO} and \eqref{modF} to get the second one. The latter  together with the inequality $(1+\rho)^{-8(n+1)}\rho^{4n+5}\leq\rho^{-4}$ lead  to
\begin{equation}\label{numerator1BA}
\int_{k<\rho<2k}|F^\veps|^2\,Vol_{\Theta}=O(\veps^5).
\end{equation}
Similarly,  we obtain  using \eqref{intO}, \eqref{modF}, \eqref{deltaXa} and \eqref{XaFineq}  the next two relations
\begin{equation}\label{numerator1BB}
\int_{k<\rho<2k}|X_aF^\veps||F^\veps|\,Vol_{\Theta}\\=O(\veps^5), \quad
\int_{k<\rho<2k}|X_aF^\veps||X_bF^\veps|\,Vol_{\Theta}=O(\veps^5).
\end{equation}
A substitution of \eqref{numerator1BA}, \eqref{numerator1BB}  into \eqref{numerator1B} gives
\begin{equation}\label{newI2}I_2=O(\veps^5)
\end{equation}
which combined  with \eqref{numerator1AALast} imply
\begin{equation}\label{numerator1Last}
\int_M|\nabla f^{\veps}|_{\eta}^2\,Vol_{\eta}=\tilde{b}_0(n)+\tilde{b}_4(n)||\W||^2\veps^4+O(\veps^5).
\end{equation}
 \emph{C)} Finally, it remains to investigate the integral $\int_MS(f^\veps)^2\,Vol_{\eta}$ that appears in \eqref{Yamabefunc2A}. We have
\begin{multline}\label{numerator2}
\int_MS(f^\veps)^2\,Vol_{\eta}=\int_{\QH}[S_{(0)}+S_{(1)}+S_{(2)}+O(\rho^3)](f^\veps)^2[1+v_2+O(\rho^3)]\,Vol_{\Theta}\\=\int_{\rho<k}[S_{(0)}+S_{(1)}+S_{(2)}+O(\rho^3)](F^\veps)^2[1+v_2+O(\rho^3)]\,Vol_{\Theta}\\+\int_{k<\rho<2k}[S_{(0)}+S_{(1)}+S_{(2)}+O(\rho^3)]\psi^2(F^\veps)^2[1+v_2+O(\rho^3)]\,Vol_{\Theta}=:J_1+J_2,
\end{multline}
where we used Lemma~\ref{Conectvol} to get the first equality and the definition of $f^\veps$ to obtain the second one.

We examine separately the integrals $J_1$ and $J_2$ in \eqref{numerator2}. For the integral $J_1$ we have
\begin{multline}\label{numerator2A}
J_1=\int_{\rho<k/\veps}[S_{(0)}\veps^2+S_{(1)}\veps^3+(S_{(0)}v_2+S_{(2)})\veps^4+O(\rho^3\veps^5)]F^2\,Vol_{\Theta}\\=\int_{\QH}[S_{(0)}\veps^2+S_{(1)}\veps^3+(S_{(0)}v_2+S_{(2)})\veps^4]F^2\,Vol_{\Theta}-\int_{\rho>k/\veps}[S_{(0)}\veps^2+S_{(1)}\veps^3+(S_{(0)}v_2+S_{(2)})\veps^4]F^2\,Vol_{\Theta}\\+\int_{\rho<k/\veps}O(\rho^3\veps^5)F^2\,Vol_{\Theta}.
\end{multline}
We handle the three integrals that stay in the right-hand side of \eqref{numerator2A}.
We have firstly
\begin{equation*}
\int_{\QH}[S_{(0)}\veps^2+S_{(1)}\veps^3+(S_{(0)}v_2+S_{(2)})\veps^4]F^2\,Vol_{\Theta}=\veps^2e_2+\veps^3e_3+\veps^4e_4,
\end{equation*}
where $e_2:=\int_{\QH}S_{(0)}F^2\,Vol_{\Theta},\quad e_3:=\int_{\QH}S_{(1)}F^2\,Vol_{\Theta}\quad$and$\quad e_4:=\int_{\QH}(S_{(0)}v_2+S_{(2)})F^2\,Vol_{\Theta}$
are some quaternionic contact invariant scalar quantities of weight $2, 3$ and $4,$ respectively. It follows from Theorem~\ref{normqc} that $e_2=e_3=0.$
To show $e_4=0$ we use that $S_{(0)}=S|_q=0$ by Theorem~\ref{mainK}. Furthermore, $$S_{(2)}=\sum_{o(A)=2}\frac{1}{(\#A)!}\Big(\frac{1}{2}\Big)^{o(A)-\#A}x^A(X_AS)|_q=\sum_{o(A)=2}\frac{1}{(\#A)!}\Big(\frac{1}{2}\Big)^{o(A)-\#A}x^A(S_{,A})|_q=\sum_{\alpha,\beta}\frac{1}{2}x^\alpha x^\beta(S_{,\alpha\beta})|_q.$$
Indeed,    Proposition~\ref{Taylor} implies  the first equality. The second one follows from the fact that $S\in\mathcal{O}_{1}$ and Lemma~\ref{derK}. The third identity is a consequence of the  equality $S_{,\talpha}|_q=0, \talpha\in\{4n+1,4n+2,4n+3\},$ see Theorem~\ref{mainK}. So, we obtain $e_4=S_{,\alpha\beta}|_qc^{\alpha\beta},$ where $c^{\alpha\beta}:=\frac{1}{2}\int_{\QH}x^\alpha x^\beta F^2\,Vol_{\Theta}\quad$ is a quaternionic contact scalar invariant quantity of weight $0.$ But the only qc scalar invariant quantities that can be formed by a complete contraction of $S_{,\alpha\beta}$ are $S\indices{_{,\alpha}^\alpha}$ and $S_{,\talpha}$ (see \cite{Kunk}, p. 30), which are  zero at $q$ because of Theorem~\ref{mainK}. Thus  $e_4=0$ and we  conclude that
\begin{equation}\label{numerator2AA}
\int_{\QH}[S_{(0)}\veps^2+S_{(1)}\veps^3+(S_{(0)}v_2+S_{(2)})\veps^4]F^2\,Vol_{\Theta}=0.
\end{equation}

To deal with the second integral in the right-hand side of \eqref{numerator2A}, we are based on the equality
\begin{equation*}
\int_{\rho>k/\veps}[S_{(0)}\veps^2+S_{(1)}\veps^3+(S_{(0)}v_2+S_{(2)})\veps^4]F^2\,Vol_{\Theta}=O\big[\int_{k/\veps}^\infty\big(\sum_{i=0}^2\rho^i\veps^{i+2}\big)(1+\rho)^{-8(n+1)}\rho^{4n+5}d\rho\big],
\end{equation*}
following from \eqref{intO} and \eqref{modF}. This limiting behavior together with  $(1+\rho)^{-8(n+1)}\rho^{4n+5+i}\leq\rho^{i-4}$ yield
\begin{equation}\label{numerator2AB}
\int_{\rho>k/\veps}[S_{(0)}\veps^2+S_{(1)}\veps^3+(S_{(0)}v_2+S_{(2)})\veps^4]F^2\,Vol_{\Theta}=O(\veps^5).
\end{equation}

To handle the third integral in the right-hand side of \eqref{numerator2A}, we observe at first the equality
\begin{equation*}
\int_{\rho<k/\veps}O(\rho^3\veps^5)F^2\,Vol_{\Theta}=O\big[\veps^5\int_0^\infty\rho^{4n+8}(1+\rho)^{-8(n+1)}d\rho\big],
\end{equation*}
which is a direct consequence from \eqref{intO} and \eqref{modF}. It is not difficult to see that this identity together with the inequality $\rho^{4n+8}(1+\rho)^{-8(n+1)}$ imply
\begin{equation}\label{numerator2AC}
\int_{\rho<k/\veps}O(\rho^3\veps^5)F^2\,Vol_{\Theta}=O(\veps^5).
\end{equation}

We substitute \eqref{numerator2AA}, \eqref{numerator2AB} and \eqref{numerator2AC} in \eqref{numerator2A} to obtain
\begin{equation}\label{numerator2ALast}
J_1=O(\veps^5).
\end{equation}

It remains to investigate the integral $J_2$ from
\eqref{numerator2}. We apply the parabolic dilations change of the
variables to $J_2$ and use \eqref{intO} and \eqref{modF} to get
after some standard calculations
\begin{equation*}
J_2=O\big[\int_{k/\veps}^{2k/\veps}\big(\underbrace{1+\veps\rho+\veps^2\rho^2+O(\veps^3\rho^3)}_{\leq Const}\big)\veps^2(1+\rho)^{-8(n+1)}\rho^{4n+5}d\rho\big]=O\big[\veps^2\int_{k/\veps}^{2k/\veps}(1+\rho)^{-8(n+1)}\rho^{4n+4}d\rho\big].
\end{equation*}
The latter combined with the inequality $(1+\rho)^{-8(n+1)}\rho^{4n+5}<\rho^{-4}$ yields $\quad J_2=O(\veps^5)$ which combined with \eqref{numerator2} and  \eqref{numerator2ALast} implies
\begin{equation}\label{numerator2Last}
\int_MS(f^\veps)^2\,Vol_{\eta}=O(\veps^5).
\end{equation}

We substitute \eqref{denexp}, \eqref{numerator1Last} and
\eqref{numerator2Last} in \eqref{Yamabefunc2A} and set
$b_0(n):=4\frac{n+2}{n+1}\tilde{b}_0(n)$ and
$b_4(n):=4\frac{n+2}{n+1}\tilde{b}_4(n),$ which ends the proof of
Proposition~\ref{firstexp}.
\end{proof}

\section{Explicit evaluation of constants}\label{explicit}

The aim of our investigations in this section is to calculate explicitly the constants $a_0(n), a_4(n), b_0(n)$ and $b_4(n)$ that stay at the right-hand side of \eqref{expan}. We begin with an algebraic lemma.
\begin{lemma}\label{algebraic}
If $\omega=\omega_{\alpha\beta}dx^{\alpha}\wedge dx^{\beta}$ is a
$2$-form, then\footnote{Here we shall use  the comma between the
lower indices to separate them, not to designate the covariant
differentiation. Actually, the sense of the comma will be clear
from the context.}:
\begin{enumerate}[a)]
\item $2n\Theta^1\wedge\Theta^2\wedge\Theta^3\wedge\omega\wedge(d\Theta^1)^{2n-1}=tr\omega\Theta^1\wedge\Theta^2\wedge\Theta^3\wedge(d\Theta^1)^{2n},$ where $tr\omega:=\omega_{12}+\cdots+\omega_{4n-1,4n}=-\frac{1}{2}I\indices{^1_{\alpha\beta}}\omega_{\alpha\beta};$
\item $8n(2n-1)\Theta^1\wedge\Theta^2\wedge\Theta^3\wedge\omega^2\wedge(d\Theta^1)^{2n-2}=(I\indices{^1_{\alpha\beta}}I\indices{^1_{\gamma\delta}}+2I\indices{^1_{\alpha\delta}}I\indices{^1_{\beta\gamma}})\omega_{\alpha\beta}\omega_{\gamma\delta}\Theta^1\wedge\Theta^2\wedge\Theta^3\wedge(d\Theta^1)^{2n}.$
\end{enumerate}
\end{lemma}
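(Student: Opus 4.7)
Both identities in (a) and (b) are pointwise linear-algebraic statements about 2-forms on the $4n$-dimensional horizontal space $H$, and depend on the quaternionic contact data only through $I^1$. The strategy is to trivialize them by choosing at a point a $g$-orthonormal frame $\{\xi_\alpha\}$ of $H$ adapted to $I^1$: $I^1\xi_{2l-1}=\xi_{2l}$ and $I^1\xi_{2l}=-\xi_{2l-1}$ for $l=1,\dots,2n$. In the dual basis $\{dx^\alpha\}$ the only non-zero components of $I^1_{\alpha\beta}$ are $I^1_{2l-1,2l}=-1$, $I^1_{2l,2l-1}=+1$, and \eqref{hforms} yields $d\Theta^1 = 2\omega_0$ with $\omega_0 := \sum_{l=1}^{2n}dx^{2l-1}\wedge dx^{2l}$ the standard symplectic form on $\mathbb{R}^{4n}$. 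Since $\Theta^1\wedge\Theta^2\wedge\Theta^3$ is a fixed 3-form transverse to $H$, both sides of (a) and (b) are $(4n+3)$-forms whose horizontal parts are top-degree, so it suffices to prove the corresponding identities for $\omega_0^k$ and to reinstate the powers of $2$ coming from $d\Theta^1=2\omega_0$ at the end.

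\textbf{Part (a).} Writing $e_l := dx^{2l-1}\wedge dx^{2l}$ (so $e_i\wedge e_j=e_j\wedge e_i$ and $e_l\wedge e_l=0$), one has the binomial expansions
\[
\omega_0^{2n} = (2n)!\,e_1\wedge\cdots\wedge e_{2n}, \qquad
\omega_0^{2n-1} = (2n-1)!\sum_{l=1}^{2n}e_1\wedge\cdots\wedge\widehat{e_l}\wedge\cdots\wedge e_{2n}.
\]
For $\omega=\omega_{\alpha\beta}dx^\alpha\wedge dx^\beta$ with antisymmetric coefficients, wedging $\omega$ with the $l$-th summand of $\omega_0^{2n-1}$ isolates only the two components $\omega_{2l-1,2l}dx^{2l-1}\wedge dx^{2l}$ and $\omega_{2l,2l-1}dx^{2l}\wedge dx^{2l-1}$ of $\omega$, which combine to $2\omega_{2l-1,2l}\,dx^{1}\wedge\cdots\wedge dx^{4n}$. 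Summing over $l$ gives
\[
\omega\wedge\omega_0^{2n-1} = 2(2n-1)!(tr\,\omega)\,dx^{1}\wedge\cdots\wedge dx^{4n} = \frac{1}{n}(tr\,\omega)\,\omega_0^{2n},
\]
and the formula $tr\,\omega=-\tfrac12 I^1_{\alpha\beta}\omega_{\alpha\beta}$ follows by a direct one-line computation using the explicit entries of $I^1_{\alpha\beta}$. Substituting $d\Theta^1=2\omega_0$ and wedging with $\Theta^1\wedge\Theta^2\wedge\Theta^3$ yields (a).

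\textbf{Part (b).} The same expansion technique, applied to $\omega\wedge\omega$, gives for each quadruple $p<q<r<s$ the Pl\"ucker-type coefficient $8(\omega_{pq}\omega_{rs}-\omega_{pr}\omega_{qs}+\omega_{ps}\omega_{qr})$ of $dx^{p}\wedge dx^{q}\wedge dx^{r}\wedge dx^s$, obtained by sign-tracking the six non-trivial permutations of $(\alpha,\beta,\gamma,\delta)$. Wedging with
$\omega_0^{2n-2} = (2n-2)!\sum_{l_1<l_2} e_1\wedge\cdots\widehat{e_{l_1}}\cdots\widehat{e_{l_2}}\cdots\wedge e_{2n}$ restricts the admissible quadruples to those of the form $\{2l_1-1,2l_1,2l_2-1,2l_2\}$, so the left-hand side of (b) collapses to $8(2n-2)!$ times a sum over $l_1<l_2$ of the Pl\"ucker expression. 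I would then evaluate the right-hand side by splitting $I^1_{\alpha\delta}I^1_{\beta\gamma}\omega_{\alpha\beta}\omega_{\gamma\delta}$ according to whether the $I^1$-pairs $\{\alpha,\delta\}$ and $\{\beta,\gamma\}$ coincide: the same-pair part contributes $-2\sum_l\omega_{2l-1,2l}^2$, while $I^1_{\alpha\beta}I^1_{\gamma\delta}\omega_{\alpha\beta}\omega_{\gamma\delta}=(I^1_{\alpha\beta}\omega_{\alpha\beta})^2 = 4(tr\,\omega)^2$ carries a matching $+4\sum_l\omega_{2l-1,2l}^2$. Doubling the same-pair part cancels this exactly, leaving only cross terms that assemble into $8\sum_{l_1<l_2}(\omega_{pq}\omega_{rs}-\omega_{pr}\omega_{qs}+\omega_{ps}\omega_{qr})$. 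Reinstating the factors $2^{2n-2}$ and $2^{2n}$ from $(d\Theta^1)^{2n-2}$ and $(d\Theta^1)^{2n}$, together with $(2n)!/(2n-2)!=2n(2n-1)$, produces the stated prefactor $8n(2n-1)$.

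\textbf{Main obstacle.} The entire difficulty lies in the sign- and factor-of-two bookkeeping in part (b): six sign calculations for the expansion of $\omega\wedge\omega$, plus a case analysis with roughly ten sub-cases for the expansion of $I^1_{\alpha\delta}I^1_{\beta\gamma}\omega_{\alpha\beta}\omega_{\gamma\delta}$. The essential algebraic observation is the cancellation of the $\omega_{2l-1,2l}^2$ contributions between $(I^1_{\alpha\beta}\omega_{\alpha\beta})^2$ and twice the same-pair part of $I^1_{\alpha\delta}I^1_{\beta\gamma}\omega_{\alpha\beta}\omega_{\gamma\delta}$; without the precise coefficient $2$ in front of the crossed term on the right-hand side these two quadratic contributions would not cancel and the identity would fail.
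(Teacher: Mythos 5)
Your proof is correct and follows essentially the same route as the paper: work in the standard coordinates where $d\Theta^1=2\sum_l dx^{2l-1}\wedge dx^{2l}$, expand the powers of $d\Theta^1$ to see that only the $I^1$-block components of $\omega$ (resp.\ the block quadruples of $\omega^2$ with their Pl\"ucker coefficients) survive, and match the result against the $I^1$-contractions. The only difference is presentational: the paper passes from the Pl\"ucker sum to the expression $(I\indices{^1_{\alpha\beta}}I\indices{^1_{\gamma\delta}}-I\indices{^1_{\alpha\gamma}}I\indices{^1_{\beta\delta}}+I\indices{^1_{\alpha\delta}}I\indices{^1_{\beta\gamma}})\omega_{\alpha\beta}\omega_{\gamma\delta}$ without detail, whereas you verify it by the case analysis with the cancellation of the $\sum_l\omega_{2l-1,2l}^2$ terms, which is a correct (and slightly more explicit) justification of the same step.
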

\begin{proof}
The claim a) follows from the next equalities
\begin{multline*}
\Theta^1\wedge\Theta^2\wedge\Theta^3\wedge\omega\wedge(d\Theta^1)^{2n-1}=\Theta^1\wedge\Theta^2\wedge\Theta^3\wedge(\omega_{\alpha\beta}dx^\alpha\wedge dx^\beta)\wedge(-I\indices{^1_{\alpha\beta}}dx^\alpha\wedge dx^\beta)^{2n-1}\\=\Theta^1\wedge\Theta^2\wedge\Theta^3\wedge(2\omega_{12}dx^1\wedge dx^2+\cdots+2\omega_{4n-1,4n}dx^{4n-1}\wedge dx^{4n})\wedge(2dx^1\wedge dx^2+\cdots+2dx^{4n-1}\wedge dx^{4n})^{2n-1}\\=\frac{1}{2n}tr\omega\Theta^1\wedge\Theta^2\wedge\Theta^3\wedge(d\Theta^1)^{2n}.
\end{multline*}

To handle b), we note  that the following formula holds:
\begin{equation}\label{algebrA}
8n(2n-1)\Theta^1\wedge\Theta^2\wedge\Theta^3\wedge dx^l\wedge dx^{l+1}\wedge dx^m\wedge dx^{m+1}\wedge(d\Theta^1)^{2n-2}=\Theta^1\wedge\Theta^2\wedge\Theta^3\wedge(d\Theta^1)^{2n},
\end{equation}
where $l, m\in\{1,3,\ldots,4n-1\}, l\neq m.$ We obtain consecutively
\begin{multline*}
\Theta^1\wedge\Theta^2\wedge\Theta^3\wedge\omega^2\wedge(d\Theta^1)^{2n-2}=\Theta^1\wedge\Theta^2\wedge\Theta^3\wedge(\omega_{\alpha\beta}\omega_{\gamma\delta}dx^{\alpha}\wedge dx^{\beta}\wedge dx^{\gamma}\wedge dx^{\delta})\wedge(d\Theta^1)^{2n-2}\\=\Theta^1\wedge\Theta^2\wedge\Theta^3\wedge[8(\omega_{12}\omega_{34}-\omega_{13}\omega_{24}+\omega_{14}\omega_{23})dx^1\wedge dx^2\wedge dx^3\wedge dx^4\\+\cdots+8(\omega_{4n-3,4n-2}\omega_{4n-1,4n}-\omega_{4n-3,4n-1}\omega_{4n-2,4n}+\omega_{4n-3,4n}\omega_{4n-2,4n-1})dx^{4n-3}\wedge dx^{4n-2}\wedge dx^{4n-1}\wedge dx^{4n}]\wedge\\\wedge(d\Theta^1)^{2n-2}=\frac{1}{n(2n-1)}(\omega_{12}\omega_{34}-\omega_{13}\omega_{24}+\omega_{14}\omega_{23}+\cdots+\omega_{4n-3,4n-2}\omega_{4n-1,4n}-\omega_{4n-3,4n-1}\omega_{4n-2,4n}\\+\omega_{4n-3,4n}\omega_{4n-2,4n-1})\Theta^1\wedge\Theta^2\wedge\Theta^3\wedge(d\Theta^1)^{2n}=\frac{1}{8n(2n-1)}(I\indices{^1_{\alpha\beta}}I\indices{^1_{\gamma\delta}}-I\indices{^1_{\alpha\gamma}}I\indices{^1_{\beta\delta}}+I\indices{^1_{\alpha\delta}}I\indices{^1_{\beta\gamma}})\omega_{\alpha\beta}\omega_{\gamma\delta}\times\\\times\Theta^1\wedge\Theta^2\wedge\Theta^3\wedge(d\Theta^1)^{2n}=\frac{1}{8n(2n-1)}(I\indices{^1_{\alpha\beta}}I\indices{^1_{\gamma\delta}}+2I\indices{^1_{\alpha\delta}}I\indices{^1_{\beta\gamma}})\omega_{\alpha\beta}\omega_{\gamma\delta}\Theta^1\wedge\Theta^2\wedge\Theta^3\wedge(d\Theta^1)^{2n},
\end{multline*}
where we used \eqref{algebrA} to get the third equality in the above chain. This completes the proof of the lemma.
\end{proof}

\begin{conv}\label{conv2}
From now on we shall use, similarly to the CR case \cite{JL}, the
notation $A\equiv B$ to designate the equivalence of the
expressions $A$ and $B$ modulo terms that contain the torsion or
the curvature or their covariant derivatives except
$R_{\alpha\beta\gamma\delta}(q),$ as well as modulo terms of
weight bigger than $4.$ We shall also use this notation when we
omit expressions containing powers of $\veps$ that lead through
the computations to powers different from $\veps^0$ and $\veps^4.$
The reason is because  we know by \eqref{expan} what kind of terms
appear  in the asymptotic expression of the qc Yamabe functional
over the test functions.
\end{conv}

We continue with a lemma which is crucial for the explicit evaluation of constants.

\begin{lemma}\label{ExplicitLemma}
For the test function $f^\veps$ defined in Section~\ref{Asymptexp} and the parabolic dilation $\delta_\veps$ in qc parabolic normal coordinates defined in Section~\ref{qcnormalcoord}  the following formulas hold:
\begin{multline}\label{dilatchange}
\begin{aligned}
&\delta_\veps^*[(f^\veps)^{2^*}\eta^1\wedge\eta^2\wedge\eta^3\wedge(d\eta^1)^{2n}]\equiv F^{2^*}[1+\veps^4\chi(x)]\Theta^1\wedge\Theta^2\wedge\Theta^3\wedge(d\Theta^1)^{2n};\\
&\delta_\veps^*[|\nabla f^\veps|_\eta^2\eta^1\wedge\eta^2\wedge\eta^3\wedge(d\eta^1)^{2n}]\equiv 16(n+1)^2[(1+|p|^2)^2+|w|^2]^{-2n-4}\big\{[(1+|p|^2)^2+|w|^2]|p|^2\\&+\frac{17}{720}\veps^4\sum_{i=1}^3I\indices{^i_{\beta\alpha}}I\indices{^i_{\xi\theta}}R\indices{_{\delta\alpha\iota}^\gamma}(q)R\indices{_{\eta\gamma\zeta}^\theta}(q)x^\beta x^\delta x^\iota x^\zeta x^\eta x^\xi(t^i)^2\big\}[1+\veps^4\chi(x)]\Theta^1\wedge\Theta^2\wedge\Theta^3\wedge(d\Theta^1)^{2n},
\end{aligned}
\end{multline}
where $\chi(x)$ is a homogeneous function of order $4$ defined below in \eqref{defchi}.
\end{lemma}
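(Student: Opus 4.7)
The plan is to carry out the pullbacks under $\delta_\veps$ term by term, exploiting the parabolic scalings of the basic objects and then invoking Convention~\ref{conv2} to discard every term outside the target equivalence class. The scalings I will need are $\delta_\veps^* F^\veps = \veps^{-2(n+1)} F$ (direct from $F^\veps(x,t)=\veps^{2(n+1)}F(\veps^{-1}x,\veps^{-2}t)$), $\delta_\veps^*\Theta^i = \veps^2 \Theta^i$ and $\delta_\veps^* d\Theta^i = \veps^2 d\Theta^i$ from Lemma~\ref{leftinvhom}, and the volume comparison $\eta^1\wedge\eta^2\wedge\eta^3\wedge(d\eta^1)^{2n} = (1+v_2+v_3+v_4+O(\rho^5))\Theta^1\wedge\Theta^2\wedge\Theta^3\wedge(d\Theta^1)^{2n}$ from Lemma~\ref{Conectvol}, whose pullback becomes $1+\veps^2 v_2+\veps^3 v_3+\veps^4 v_4+O(\veps^5\rho^5)$. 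The cutoff $\psi$ satisfies $\delta_\veps^*\psi = 1$ on $\{\rho<k/\veps\}$, and the annulus $\{k/\veps<\rho<2k/\veps\}$ produces only $O(\veps^5)$ contributions (handled as in the proof of Proposition~\ref{firstexp}).

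For the first formula, the powers $\veps^{-(4n+6)}$ from $(F^\veps)^{2^*}$ and $\veps^{4n+6}$ from the pullback of the $\QH$ volume form exactly cancel (since $2^*(n+1)=2n+3$), leaving $F^{2^*}(1+\veps^2 v_2+\veps^3 v_3+\veps^4 v_4+O(\veps^5\rho^5))\Theta^1\wedge\Theta^2\wedge\Theta^3\wedge(d\Theta^1)^{2n}$. Convention~\ref{conv2} discards the $\veps^2 v_2$ and $\veps^3 v_3$ terms (wrong $\veps$-power) and truncates $v_4$ to the portion built from $R_{\alpha\beta\gamma\delta}(q)$ alone, since every other tensor appearing in $v_4$ (torsion, Ricci, scalar curvature, etc.) vanishes at $q$ by Theorem~\ref{mainK} or is killed by the convention. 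This surviving piece defines the function $\chi(x)$ of \eqref{defchi}.

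For the second formula, I expand $|\nabla f^\veps|_\eta^2 = \sum_\alpha (\xi_\alpha f^\veps)^2$ and substitute $\xi_\alpha = s_\alpha^a X_a$ from Lemma~\ref{decompxi}. A direct calculation using $X_\alpha(\delta_\veps^* f) = \veps\,\delta_\veps^*(X_\alpha f)$ and $T_i(\delta_\veps^* f) = \veps^2\,\delta_\veps^*(T_i f)$ gives $\delta_\veps^*(X_a f^\veps) = \veps^{-2(n+1)-o(a)} X_a F$ on the interior ball, and $\delta_\veps^* s_{\alpha(k)}^a = \veps^k s_{\alpha(k)}^a$. Multiplying out and combining with $\delta_\veps^*\bigl[\eta^1\wedge\eta^2\wedge\eta^3\wedge(d\eta^1)^{2n}\bigr]$ produces the overall $\veps$-exponent $2-o(a)-o(b)+k_1+k_2$ on each summand. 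At $\veps^0$ only $k_1=k_2=0$ and $(a,b)=(\beta,\gamma)$ horizontal contribute, giving $\sum_\alpha (X_\alpha F)^2$; differentiating $F=G^{-(n+1)}$ with $G=(1+|p|^2)^2+|w|^2$ and using $I^iI^j+I^jI^i=-2\delta_{ij}$ evaluates this to $16(n+1)^2 G^{-2n-4}[(1+|p|^2)^2+|w|^2]|p|^2$, which matches the leading piece of the claim. The modulating factor $[1+\veps^4\chi(x)]$ then arises exactly as in the first formula, from multiplying the leading gradient by the volume expansion.

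The $\veps^4$ piece inside the braces collects the remaining frame-expansion contributions with $k_1+k_2-o(a)-o(b)+2 = 4$: products involving $s_{\alpha(2)}^{\talpha}$ (the first nonzero vertical component), subleading horizontal pieces $s_{\alpha(k)}^\beta$ with $k\ge 2$, and their cross terms. I will compute $\theta^\alpha_{(m)}$ and $\omega_{a(m)}^{\phantom{c}b}$ for $m\le 4$ from Proposition~\ref{recurconn}, extract $s_{\alpha(k)}^a$ for $k\le 4$ from Lemma~\ref{decompxi}'s recursion, and then apply Theorem~\ref{mainK} and the symmetries \eqref{Curvident} to kill every tensor except $R_{\alpha\beta\gamma\delta}(q)$. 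The main obstacle is the final combinatorial accounting: each surviving term must be expanded, symmetrized against the $I^i$-contractions, reduced via the first Bianchi identity, and added up, and the specific coefficient $\tfrac{17}{720}$ together with the tensorial structure $I^i_{\beta\alpha}I^i_{\xi\theta}R_{\delta\alpha\iota}{}^\gamma(q)R_{\eta\gamma\zeta}{}^\theta(q)x^\beta x^\delta x^\iota x^\zeta x^\eta x^\xi(t^i)^2$ emerge only from this detailed bookkeeping.
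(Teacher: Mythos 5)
Your setup is the same as the paper's: expand everything in homogeneous parts, use the parabolic scalings $\delta_\veps^*(X_aF^\veps)=\veps^{-2(n+1)-o(a)}X_aF$ and $\delta_\veps^*s^a_{\alpha(k)}=\veps^k s^a_{\alpha(k)}$, count the total $\veps$-exponent $k_1+k_2-o(a)-o(b)+2$, discard everything but the $\veps^0$ and $\veps^4$ slots via Convention~\ref{conv2}, and evaluate the leading term explicitly (your $16(n+1)^2G^{-2n-4}[(1+|p|^2)^2+|w|^2]|p|^2$ is correct). But the proposal stops exactly where the lemma's actual content begins. The statement is not "there exist some weight-4 corrections"; it asserts the \emph{specific} function $\chi(x)$ of \eqref{defchi} and the \emph{specific} coefficient $\tfrac{17}{720}$ with a specific tensorial structure, and these are precisely what you defer to "detailed bookkeeping." To produce them one must compute $\eta^i_{(4)}$ and $\eta^i_{(6)}$ explicitly from the recursion, show $[\eta^1\wedge\eta^2\wedge\eta^3\wedge(d\eta^1)^{2n}]_{(4n+8)}\equiv0$ (via $tr(d\eta^1_{(4)})\equiv0$, which uses Theorem~\ref{mainK}, not merely the $\veps$-power convention), convert the order-$(4n+10)$ wedge products into contractions using an algebraic identity like Lemma~\ref{algebraic}, and verify that the $t$-dependent parts of $v_4$ drop out so that $\chi$ depends on $x$ alone. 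None of this is in your argument, so the first formula is asserted rather than proved.

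For the second formula there is also a concrete error in your enumeration: you call $s_{\alpha(2)}^{\talpha}$ "the first nonzero vertical component," but the recursion of Lemma~\ref{decompxi} gives $s_{\alpha(2)}^{\talpha}=0$; the first nonzero vertical component is $s_{\alpha(3)}^{\talpha}\equiv\frac1{12}I\indices{^{\talpha}_{\beta\gamma}}R\indices{_{\theta\alpha\delta}^\gamma}(q)x^\beta x^\delta x^\theta$. This matters because the $\veps^4$ contribution comes from the cross terms $s^\beta_{(0)}s^\gamma_{(4)}$, $s^\beta_{(2)}s^\gamma_{(2)}$, $s^\beta_{(0)}s^{\tgamma}_{(5)}$, $s^\beta_{(2)}s^{\tgamma}_{(3)}$ and $s^{\tbeta}_{(3)}s^{\tgamma}_{(3)}$ — in effect from squaring a bracket whose relative-$\veps^2$ part carries the factor $\tfrac1{12}$ and whose relative-$\veps^4$ part carries $\tfrac1{120}$, so that $\bigl(\tfrac1{12}\bigr)^2+2\cdot\tfrac1{120}=\tfrac{17}{720}$. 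Products involving a (nonexistent) nonzero $s^{\talpha}_{\alpha(2)}$ would land at the wrong exponent anyway, so your list of contributing terms is not the right one. In short: correct strategy, matching the paper's, but the proof of the stated identities — the explicit homogeneous components, the vanishing of the intermediate orders, and the derivation of $\chi$ and of $\tfrac{17}{720}$ — is missing.
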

\begin{proof}
We begin with the relations
\begin{equation}\label{recurconnA}
\begin{aligned}
&\eta^i_{(2)}=\Theta^i;\quad\eta^i_{(3)}=0;\quad\eta^i_{(m)}\equiv\frac{1}{m}(t^j\omega\indices{_j^i}-2I\indices{^i_{\alpha\beta}}x^\alpha\theta^\beta)_{(m)},\quad m\geq 4;\\&\theta_{(1)}^\alpha=\Xi^\alpha;\quad\theta^\alpha_{(2)}=0;\quad\theta^\alpha_{(m)}\equiv\frac{1}{m}(x^\beta\omega\indices{_\beta^\alpha})_{(m)},\quad m\geq 3;\\&\omega_{a(1)}^{\phantom{c}b}=0;\quad\omega_{a(m)}^{\phantom{c}b}\equiv\frac{1}{m}(R\indices{_{\alpha\beta a}^b}x^\alpha\theta^\beta)_{(m)}\equiv\frac{1}{m}R\indices{_{\alpha\beta a}^b}(q)x^\alpha\theta^\beta_{(m-1)},\quad m\geq 2,
\end{aligned}
\end{equation}
which are consequences of Proposition~\ref{recurconn}.

We examine the homogeneous parts of certain orders.
% (namely, only these we need in order to obtain \eqref{dilatchange}, taking into account the equivalence convention) of the differential form %$\eta^1\wedge\eta^2\wedge\eta^3\wedge(d\eta^1)^{2n}.$
 For the lowest possible order we have the  formula
\begin{equation}\label{hompartA}
[\eta^1\wedge\eta^2\wedge\eta^3\wedge(d\eta^1)^{2n}]_{(4n+6)}=\Theta^1\wedge\Theta^2\wedge\Theta^3\wedge(d\Theta^1)^{2n}.
\end{equation}
We see by a simple induction over $m$ in \eqref{recurconnA} that only the even-degree homogeneous parts  of $\eta^1, \eta^2$ and $\eta^3$ are non-zero which implies
\begin{equation}\label{hompartB}
[\eta^1\wedge\eta^2\wedge\eta^3\wedge(d\eta^1)^{2n}]_{(4n+7)}\equiv[\eta^1\wedge\eta^2\wedge\eta^3\wedge(d\eta^1)^{2n}]_{(4n+9)}\equiv 0.
\end{equation}

The situation with the terms $[\eta^1\wedge\eta^2\wedge\eta^3\wedge(d\eta^1)^{2n}]_{(4n+8)}$ and $[\eta^1\wedge\eta^2\wedge\eta^3\wedge(d\eta^1)^{2n}]_{(4n+10)}$ is more complicated. We have for the first one the decomposition
\begin{multline}\label{decompfirst}
[\eta^1\wedge\eta^2\wedge\eta^3\wedge(d\eta^1)^{2n}]_{(4n+8)}=\eta^1_{(2)}\wedge\eta^2_{(2)}\wedge\eta^3_{(4)}\wedge(d\Theta^1)^{2n}+\eta^1_{(2)}\wedge\eta^2_{(4)}\wedge\eta^3_{(2)}\wedge(d\Theta^1)^{2n}\\+\eta^1_{(4)}\wedge\eta^2_{(2)}\wedge\eta^3_{(2)}\wedge(d\Theta^1)^{2n}+2n\eta^1_{(2)}\wedge\eta^2_{(2)}\wedge\eta^3_{(2)}\wedge(d\eta^1)_{(4)}\wedge(d\Theta^1)^{2n-1}.
\end{multline}
We get by \eqref{recurconnA} and some simple calculations that
\begin{equation}\label{etahom}
\eta^i_{(4)}\equiv-\frac{1}{12}I\indices{^i_{\alpha\beta}}R\indices{_{\delta\zeta\gamma}^\beta}(q)x^\alpha x^\gamma x^\delta dx^\zeta,\quad i=1, 2, 3,
\end{equation}
which leads  to the relations
\begin{equation}\label{decompfirstA}
\eta^1_{(2)}\wedge\eta^2_{(2)}\wedge\eta^3_{(4)}\wedge(d\Theta^1)^{2n}\equiv\eta^1_{(2)}\wedge\eta^2_{(4)}\wedge\eta^3_{(2)}\wedge(d\Theta^1)^{2n}\equiv\eta^1_{(4)}\wedge\eta^2_{(2)}\wedge\eta^3_{(2)}\wedge(d\Theta^1)^{2n}\equiv 0.
\end{equation}
Furthermore, we obtain by \eqref{etahom}
\begin{equation}\label{detahom}
(d\eta^i)_{(4)}=d(\eta^i_{(4)})\equiv-\frac{1}{12}[I\indices{^i_{\alpha\beta}}R\indices{_{\delta\zeta\gamma}^\beta}(q)x^\gamma x^\delta+I\indices{^i_{\gamma\beta}}R\indices{_{\delta\zeta\alpha}^\beta}(q)x^\gamma x^\delta+I\indices{^i_{\delta\beta}}R\indices{_{\alpha\zeta\gamma}^\beta}(q)x^\delta x^\gamma]dx^\alpha\wedge dx^\zeta
\end{equation}
which gives  by  straightforward calculations  the formula for the trace of $d\eta^1_{(4)}$:
\begin{equation*}
tr(d\eta^1_{(4)})=-\frac12I\indices{^1_{\alpha\beta}}(d\eta^1_{(4)})_{\alpha\beta}\equiv-\frac{1}{24}R_{\delta\gamma}(q)x^\delta x^\gamma+\frac{n}{6}I\indices{^1_\gamma^\beta}\zeta_{1\delta\beta}(q)x^\gamma x^\delta-\frac{n}{6}I\indices{^1_{\delta}^\beta}\sigma_{1\gamma\beta}(q)x^\gamma x^\delta,
\end{equation*}
where the tensors $\zeta_{1\delta\beta}$ and
$\sigma_{1\gamma\beta}$ are defined in
\eqref{CurvatureContractAlmost}. The latter combined with
Theorem~\ref{mainK} yields $ tr(d\eta^1_{(4)})\equiv 0$ which
together with Lemma~\ref{algebraic}, a) lead  to $
2n\eta^1_{(2)}\wedge\eta^2_{(2)}\wedge\eta^3_{(2)}\wedge(d\eta^1)_{(4)}\wedge(d\Theta^1)^{2n-1}\equiv
0.$

Substitute the latter and \eqref{decompfirstA} into \eqref{decompfirst} to get
\begin{equation}\label{hompartC}
[\eta^1\wedge\eta^2\wedge\eta^3\wedge(d\eta^1)^{2n}]_{(4n+8)}\equiv 0.
\end{equation}
We continue with the investigation of the term $[\eta^1\wedge\eta^2\wedge\eta^3\wedge(d\eta^1)^{2n}]_{(4n+10)}.$ We have the decomposition
\begin{multline}\label{decompsecond}
[\eta^1\wedge\eta^2\wedge\eta^3\wedge(d\eta^1)^{2n}]_{(4n+10)}=\eta^1_{(2)}\wedge\eta^2_{(2)}\wedge\eta^3_{(2)}\wedge[(d\eta^1)^{2n}]_{(4n+4)}\\+[\eta^1\wedge\eta^2\wedge\eta^3]_{(8)}\wedge[(d\eta^1)^{2n}]_{(4n+2)}+[\eta^1\wedge\eta^2\wedge\eta^3]_{(10)}\wedge[(d\eta^1)^{2n}]_{(4n)},
\end{multline}
and examine each of the terms  in the right-hand side of \eqref{decompsecond}. The first one is decomposed as follows:
\begin{multline}\label{decompsecondA}
\eta^1_{(2)}\wedge\eta^2_{(2)}\wedge\eta^3_{(2)}\wedge[(d\eta^1)^{2n}]_{(4n+4)}\\=2n\Theta^1\wedge\Theta^2\wedge\Theta^3\wedge(d\eta^1)_{(6)}\wedge(d\Theta^1)^{2n-1}+n(2n-1)\Theta^1\wedge\Theta^2\wedge\Theta^3\wedge(d\eta^1_{(4)})^2\wedge(d\Theta^1)^{2n-2}.
\end{multline}
To handle the first term in the right-hand side of \eqref{decompsecondA}, we use that
\begin{equation}\label{etahomsix}
\eta_{(6)}^i\equiv-\frac{1}{360}I\indices{^i_{\alpha\beta}}R\indices{_{\delta\theta\gamma}^\beta}(q)R\indices{_{\eta\iota\zeta}^\theta}(q)x^\alpha x^\gamma x^\delta x^\zeta x^\eta dx^\iota,\quad i=1, 2, 3,
\end{equation}
following  easily from \eqref{recurconnA}. The formula \eqref{etahomsix} yields  the equivalence
\begin{multline*}
(d\eta^i)_{(6)}=d(\eta^i_{(6)})\equiv-\frac{1}{360}\big(I\indices{^i_{\alpha\beta}}R\indices{_{\delta\theta\gamma}^\beta}(q)R\indices{_{\eta\sigma\zeta}^\theta}(q)+I\indices{^i_{\gamma\beta}}R\indices{_{\delta\theta\alpha}^\beta}(q)R\indices{_{\eta\sigma\zeta}^\theta}(q)+I\indices{^i_{\delta\beta}}R\indices{_{\alpha\theta\gamma}^\beta}(q)R\indices{_{\eta\sigma\zeta}^\theta}(q)\\+I\indices{^i_{\zeta\beta}}R\indices{_{\delta\theta\gamma}^\beta}(q)R\indices{_{\eta\sigma\alpha}^\theta}(q)+I\indices{^i_{\eta\beta}}R\indices{_{\delta\theta\gamma}^\beta}(q)R\indices{_{\alpha\sigma\zeta}^\theta}(q)\big)x^\gamma x^\delta x^\zeta x^\eta dx^\alpha\wedge dx^\sigma,\quad i=1, 2, 3,
\end{multline*}
which together with Theorem~\ref{mainK} and some standard computations give
\begin{equation*}
tr(d\eta^1_{(6)})\equiv\frac{1}{720}\big(R\indices{_{\delta\theta\gamma}^\beta}(q)R\indices{_{\eta\beta\zeta}^\theta}(q)+I^{1\alpha\sigma}I\indices{^1_{\gamma\beta}}R\indices{_{\delta\theta\alpha}^\beta}(q)R\indices{_{\eta\sigma\zeta}^\theta}(q)+I^{1\alpha\sigma}I\indices{^1_{\delta\beta}}R\indices{_{\alpha\theta\gamma}^\beta}(q)R\indices{_{\eta\sigma\zeta}^\theta}(q)\big)x^\gamma x^\delta x^\zeta x^\eta.
\end{equation*}
The last  formula and Lemma~\ref{algebraic}, a) imply
\begin{multline}\label{decompsecondAA}
2n\Theta^1\wedge\Theta^2\wedge\Theta^3\wedge(d\eta^1)_{(6)}\wedge(d\Theta^1)^{2n-1}\equiv\frac{1}{720}\big(R\indices{_{\delta\theta\gamma}^\beta}(q)R\indices{_{\eta\beta\zeta}^\theta}(q)+I^{1\alpha\sigma}I\indices{^1_{\gamma\beta}}R\indices{_{\delta\theta\alpha}^\beta}(q)R\indices{_{\eta\sigma\zeta}^\theta}(q)\\+I^{1\alpha\sigma}I\indices{^1_{\delta\beta}}R\indices{_{\alpha\theta\gamma}^\beta}(q)R\indices{_{\eta\sigma\zeta}^\theta}(q)\big)x^\gamma x^\delta x^\zeta x^\eta\Theta^1\wedge\Theta^2\wedge\Theta^3\wedge(d\Theta^1)^{2n}.
\end{multline}
In order to manipulate the second object in the right-hand side of
\eqref{decompsecondA}, we use Lemma~\ref{algebraic}, b), the
equivalence \eqref{detahom} and  Theorem~\ref{mainK}. After some
long but standard calculations we establish the equivalence
\begin{multline}\label{decompsecondAB}
n(2n-1)\Theta^1\wedge\Theta^2\wedge\Theta^3\wedge(d\eta^1_{(4)})^2\wedge(d\Theta^1)^{2n-2}\\\equiv-\frac{1}{576}\big(R\indices{_{\delta\alpha\gamma}^\beta}(q)R\indices{_{\eta\beta\zeta}^\alpha}(q)+2I^{1\alpha\theta}I\indices{^1_{\zeta\iota}}R\indices{_{\delta\theta\gamma}^\beta}(q)R\indices{_{\eta\beta\alpha}^\iota}(q)+2I^{1\alpha\theta}I\indices{^1_{\eta\iota}}R\indices{_{\delta\theta\gamma}^\beta}(q)R\indices{_{\alpha\beta\zeta}^\iota}(q)\\+I^{1\alpha\kappa}I^{1\lambda\theta}I\indices{^1_{\gamma\beta}}I\indices{^1_{\zeta\iota}}R\indices{_{\delta\theta\alpha}^\beta}(q)R\indices{_{\eta\kappa\lambda}^\iota}(q)+I^{1\alpha\kappa}I^{1\lambda\theta}I\indices{^1_{\delta\beta}}I\indices{^1_{\eta\iota}}R\indices{_{\alpha\theta\gamma}^\beta}(q)R\indices{_{\lambda\kappa\zeta}^\iota}(q)\\+2I^{1\alpha\kappa}I^{1\lambda\theta}I\indices{^1_{\gamma\beta}}I\indices{^1_{\eta\iota}}R\indices{_{\delta\theta\alpha}^\beta}(q)R\indices{_{\lambda\kappa\zeta}^\iota}(q)\big)x^\gamma x^\delta x^\zeta x^\eta\Theta^1\wedge\Theta^2\wedge\Theta^3\wedge(d\Theta^1)^{2n}.
\end{multline}
We continue with the second term  in the right-hand side of \eqref{decompsecond}, which  can be decomposed as follows
\begin{multline}\label{newlabel}
[\eta^1\wedge\eta^2\wedge\eta^3]_{(8)}\wedge[(d\eta^1)^{2n}]_{(4n+2)}=2n\eta^1_{(4)}\wedge\eta^2_{(2)}\wedge\eta^3_{(2)}\wedge(d\eta^1)_{(4)}\wedge(d\Theta^1)^{2n-1}\\+2n\eta^1_{(2)}\wedge\eta^2_{(4)}\wedge\eta^3_{(2)}\wedge(d\eta^1)_{(4)}\wedge(d\Theta^1)^{2n-1}+2n\eta^1_{(2)}\wedge\eta^2_{(2)}\wedge\eta^3_{(4)}\wedge(d\eta^1)_{(4)}\wedge(d\Theta^1)^{2n-1}.
\end{multline}
According to \eqref{etahom} and \eqref{detahom} the forms $\eta^i_{(4)}$ and $(d\eta^i)_{(4)}$ have no $dt^j$ term  which together with \eqref{newlabel} imply
\begin{equation}\label{decompsecondB}
[\eta^1\wedge\eta^2\wedge\eta^3]_{(8)}\wedge[(d\eta^1)^{2n}]_{(4n+2)}\equiv 0.
\end{equation}
We decompose the last term in the right-hand  side of \eqref{decompsecond} in a similar manner, namely
\begin{multline*}
[\eta^1\wedge\eta^2\wedge\eta^3]_{(10)}\wedge[(d\eta^1)^{2n}]_{(4n)}=\eta^1_{(2)}\wedge\eta^2_{(4)}\wedge\eta^3_{(4)}\wedge(d\Theta^1)^{2n}+\eta^1_{(4)}\wedge\eta^2_{(2)}\wedge\eta^3_{(4)}\wedge(d\Theta^1)^{2n}+\eta^1_{(4)}\wedge\eta^2_{(4)}\wedge\eta^3_{(2)}\wedge(d\Theta^1)^{2n}\\+\eta^1_{(2)}\wedge\eta^2_{(2)}\wedge\eta^3_{(6)}\wedge(d\Theta^1)^{2n}+\eta^1_{(2)}\wedge\eta^2_{(6)}\wedge\eta^3_{(2)}\wedge(d\Theta^1)^{2n}+\eta^1_{(6)}\wedge\eta^2_{(2)}\wedge\eta^3_{(2)}\wedge(d\Theta^1)^{2n}.
\end{multline*}
Since the forms $\eta^i_{(4)}$ and $\eta^i_{(6)}$ do not contain $dt^j$ term by \eqref{etahom} and \eqref{etahomsix}, the last  identity gives the relation
\begin{equation}\label{decompsecondC}
[\eta^1\wedge\eta^2\wedge\eta^3]_{(10)}\wedge[(d\eta^1)^{2n}]_{(4n)}\equiv 0.
\end{equation}

Now we get by \eqref{decompsecond}, \eqref{decompsecondA}, \eqref{decompsecondAA}, \eqref{decompsecondAB}, \eqref{decompsecondB} and \eqref{decompsecondC} the equivalence
\begin{equation}\label{decompThesecond}
[\eta^1\wedge\eta^2\wedge\eta^3\wedge(d\eta^1)^{2n}]_{(4n+10)}\equiv\chi(x)\Theta^1\wedge\Theta^2\wedge\Theta^3\wedge(d\Theta^1)^{2n},
\end{equation}
where the homogeneous function $\chi$ is defined by the equality
\begin{multline}\label{defchi}
\chi(x)=\chi(x^1,\ldots,x^{4n}):=\Big(-\frac{1}{2880}R\indices{_{\delta\alpha\gamma}^\beta}(q)R\indices{_{\eta\beta\zeta}^\alpha}(q)-\frac{1}{480}I^{1\alpha\theta}I\indices{^1_{\zeta\iota}}R\indices{_{\delta\theta\gamma}^\beta}(q)R\indices{_{\eta\beta\alpha}^\iota}(q)\\-\frac{1}{480}I^{1\alpha\theta}I\indices{^1_{\eta\iota}}R\indices{_{\delta\theta\gamma}^\beta}(q)R\indices{_{\alpha\beta\zeta}^\iota}(q)-\frac{1}{576}I^{1\alpha\kappa}I^{1\lambda\theta}I\indices{^1_{\gamma\beta}}I\indices{^1_{\zeta\iota}}R\indices{_{\delta\theta\alpha}^\beta}(q)R\indices{_{\eta\kappa\lambda}^\iota}(q)\\-\frac{1}{576}I^{1\alpha\kappa}I^{1\lambda\theta}I\indices{^1_{\delta\beta}}I\indices{^1_{\eta\iota}}R\indices{_{\alpha\theta\gamma}^\beta}(q)R\indices{_{\lambda\kappa\zeta}^\iota}(q)-\frac{1}{288}I^{1\alpha\kappa}I^{1\lambda\theta}I\indices{^1_{\gamma\beta}}I\indices{^1_{\eta\iota}}R\indices{_{\delta\theta\alpha}^\beta}(q)R\indices{_{\lambda\kappa\zeta}^\iota}(q)\Big)x^\gamma x^\delta x^\zeta x^\eta.
\end{multline}

The relations \eqref{hompartA}, \eqref{hompartB}, \eqref{hompartC} and \eqref{decompThesecond} lead  to the equivalence\footnote{Note that we consider the homogeneous parts of $\eta^1\wedge\eta^2\wedge\eta^3\wedge(d\eta^1)^{2n}$ up to order $4n+10$ since the higher order terms  belong to $O(\veps^5)$ in the expressions after the parabolic dilation change of the variables.}
\begin{equation*}
\eta^1\wedge\eta^2\wedge\eta^3\wedge(d\eta^1)^{2n}\equiv[1+\chi(x)]\Theta^1\wedge\Theta^2\wedge\Theta^3\wedge(d\Theta^1)^{2n},
\end{equation*}
which in turn gives
\begin{equation}\label{volchanvar}
\delta_{\veps}^*[\eta^1\wedge\eta^2\wedge\eta^3\wedge(d\eta^1)^{2n}]\equiv\veps^{4n+6}[1+\veps^4\chi(x)]\Theta^1\wedge\Theta^2\wedge\Theta^3\wedge(d\Theta^1)^{2n}.
\end{equation}
It follows by the very definition of the test function $f^\veps$
that $
\delta^*_{\veps}(f^\veps)^{2^*}=\veps^{-4n-6}\delta^*_{\veps}(\psi^{2^*})F^{2^*},
$ which together with \eqref{volchanvar} give the first formula in
\eqref{dilatchange}. We omit the multiplier
$\delta^*_{\veps}(\psi^{2^*})$ since we know from
\eqref{firsttermInt} and \eqref{denexp} that it appears only in
the $O(\veps^5)$--part of the denominator of  the asymptotic
expansion \eqref{expan}.

In order to prove the second formula in \eqref{dilatchange}, we need to find the effect of the parabolic dilation change of the variables on the squared norm
\begin{equation}\label{squarednorm} |\nabla f^\veps|^2_{\eta}:=\sum_{\alpha=1}^{4n}(\xi_\alpha f^\veps)^2=\sum_{\alpha=1}^{4n}(s_\alpha^aX_af^\veps)(s_\alpha^bX_bf^\veps).
\end{equation}
We continue with the computations we commenced in Lemma~\ref{decompxi}. We have
\begin{equation}\label{homsone}
s_{\alpha(1)}^\beta=-\sum_{i\geq 2}s_{\alpha(o(a)-i)}^a\theta^\beta_{(1+i)}(X_a)=0,
\end{equation}
where the first identity follows from \eqref{coorrec} and the second one--from \eqref{coorfun}. For the term $s_{\alpha(2)}^\beta$ we obtain
\begin{equation}\label{homssecond}
s_{\alpha(2)}^\beta=-\sum_{i\geq 2}s_{\alpha(2-i)}^\gamma\theta^\beta_{(1+i)}(X_\gamma)-\sum_{i\geq 2}s_{\alpha(3-i)}^{\talpha}\theta^\beta_{(1+i)}(X_{\talpha})=-\theta^\beta_{(3)}(X_\alpha)\equiv-\frac{1}{6}R\indices{_{\delta\alpha\gamma}^\beta}(q)x^\gamma x^\delta,
\end{equation}
in which we utilized \eqref{coorrec} to get the first relation, whereas the second one is obtained  by  \eqref{coorfun} and the last equivalence is a result of a repeated application of the relations in \eqref{recurconnA}.
In the same spirit we get
\begin{equation}\label{homsvertsec}
s_{\alpha(2)}^{\talpha}=0,\quad s_{\alpha(3)}^\beta\equiv 0.
\end{equation}
Note that the first one is obtained with the help of \eqref{coorrec} and \eqref{coorfun}, while we used \eqref{coorrec}, \eqref{coorfun}, \eqref{homsone}, the first relation in \eqref{homsvertsec} and \eqref{recurconnA} to establish the second one. Regarding  $s_{\alpha(3)}^{\talpha}$, we obtain  similarly  that
\begin{equation}\label{homshorverthree}
s_{\alpha(3)}^{\talpha}\equiv\frac{1}{12}I\indices{^{\talpha}_{\beta\gamma}}R\indices{_{\theta\alpha\delta}^\gamma}(q)x^\beta x^\delta x^\theta,
\end{equation}
where we set  $I\indices{^{\talpha}_{\beta\gamma}}:=I\indices{^{\talpha-4n}_{\beta\gamma}}$ and used \eqref{coorrec}, \eqref{coorfun} and  \eqref{recurconnA}.

We get for the term $s_{\alpha(4)}^\beta$ the following chain of relations
\begin{equation}\label{homsfourth}
s_{\alpha(4)}^\beta=-\sum_{i\geq 2}s_{\alpha(4-i)}^\gamma\theta^\beta_{(1+i)}(X_\gamma)-\sum_{i\geq 2}s_{\alpha(5-i)}^{\talpha}\theta^\beta_{(1+i)}(X_{\talpha})\equiv\frac{7}{360}R\indices{_{\delta\alpha\theta}^\gamma}(q)R\indices{_{\eta\gamma\zeta}^\beta}(q)x^\delta x^\zeta x^\eta x^\theta,
\end{equation}
where we took into account \eqref{coorrec} to obtain the first equality, while the second one is a result of \eqref{coorfun}, \eqref{homsone}, \eqref{homssecond}, \eqref{homshorverthree} and a repeated application of  \eqref{recurconnA}.

In a similar way, we establish  using \eqref{coorrec}, \eqref{coorfun}, \eqref{homsone}, \eqref{homsvertsec} and \eqref{recurconnA} that
\begin{equation}\label{homshorverfour}
s_{\alpha(4)}^{\talpha}\equiv 0.
\end{equation}
%Finally, the last homogeneous function we are interested is $s_{\alpha(5)}^{\talpha}.$ We obtain in a similar way as above, as a result of
Similarly, using  \eqref{coorrec}, \eqref{coorfun},  \eqref{homsone}, \eqref{homssecond}, \eqref{homshorverthree} and \eqref{recurconnA}, we get
\begin{equation}\label{homshorverfive}
s_{\alpha(5)}^{\talpha}\equiv-\frac{1}{90}I\indices{^{\talpha}_{\theta\zeta}}R\indices{_{\delta\alpha\gamma}^\beta}(q)R\indices{_{\xi\beta\eta}^\zeta}(q)x^\gamma x^\delta x^\eta x^\theta x^\xi.
\end{equation}
To find the effect of the parabolic dilation change of the variables on the squared norm \eqref{squarednorm} we describe the result of this change on the functions $X_\alpha F^\veps$ and $X_{\talpha}F^\veps.$ We obtain with some  standard calculations  that
%from the  very definitions of the function $F^\veps$ and of the vector fields $X_\alpha$ and $X_{\talpha}$ the identities
\begin{multline}\label{dilatchanXF}
\begin{aligned}
&\delta_\veps^*(X_\alpha F^\veps)=-4(n+1)\veps^{-2n-3}[(1+|p|^2)^2+|w|^2]^{-n-2}[(1+|p|^2)x^\alpha+\sum_{i=1}^3I\indices{^i_{\beta\alpha}}x^\beta t^i];\\
&\delta_\veps^*(X_{\talpha}F^\veps)=-4(n+1)\veps^{-2n-4}[(1+|p|^2)^2+|w|^2]^{-n-2}t^{\talpha},
\end{aligned}
\end{multline}
where in the right-hand side of the last formula we set $t^{\talpha}:=t^{\talpha-4n}.$
Now we have %the following relations
\begin{multline}\label{new11}
\delta^*_{\veps}(\xi_\alpha f^\veps)\equiv\delta^*_{\veps}(s_\alpha^\beta X_\beta F^\veps+s_\alpha^{\talpha}X_{\talpha} F^\veps)\\\equiv\delta^*_\veps[(s_{\alpha(0)}^\beta+s_{\alpha(1)}^\beta+s_{\alpha(2)}^\beta+s_{\alpha(3)}^\beta+s_{\alpha(4)}^\beta)X_\beta F^\veps+(s_{\alpha(0)}^{\talpha}+s_{\alpha(1)}^{\talpha}+s_{\alpha(2)}^{\talpha}+s_{\alpha(3)}^{\talpha}+s_{\alpha(4)}^{\talpha}+s_{\alpha(5)}^{\talpha})X_{\talpha}F^\veps]\\\equiv\delta^*_{\veps}\big(X_{\alpha}F^\veps-\frac{1}{6}R\indices{_{\delta\alpha\gamma}^\beta}(q)x^\gamma x^\delta X_\beta F^\veps+\frac{7}{360}R\indices{_{\delta\alpha\theta}^\gamma}(q)R\indices{_{\eta\gamma\zeta}^\beta}(q)x^\delta x^\zeta x^\eta x^\theta X_\beta F^\veps+\frac{1}{12}I\indices{^{\talpha}_{\beta\gamma}}R\indices{_{\theta\alpha\delta}^\gamma}(q)x^\beta x^\delta x^\theta X_{\talpha}F^\veps\\-\frac{1}{90}I\indices{^{\talpha}_{\theta\zeta}}R\indices{_{\delta\alpha\gamma}^\beta}(q)R\indices{_{\xi\beta\eta}^\zeta}(q)x^\gamma x^\delta x^\eta x^\theta x^\xi X_{\talpha}F^\veps\big)=-4(n+1)\veps^{-2n-3}[(1+|p|^2)^2+|w|^2]^{-n-2}\Big([(1+|p|^2)x^\alpha\\+\sum_{i=1}^3I\indices{^i_{\beta\alpha}}x^\beta t^i]-\frac{1}{12}\veps^2\sum_{i=1}^3I\indices{^i_{\eta\beta}}R\indices{_{\delta\alpha\gamma}^\beta}(q)x^\gamma x^\delta x^\eta t^i+\frac{1}{120}\veps^4\sum_{i=1}^3I\indices{^i_{\xi\beta}}R\indices{_{\delta\alpha\theta}^\gamma}(q)R\indices{_{\eta\gamma\zeta}^\beta}(q)x^\delta x^\zeta x^\eta x^\theta x^\xi t^i\Big).
\end{multline}
In order to get the first equivalence  in \eqref{new11} we use  $f^\veps=\psi F^\veps\equiv F^\veps$ since \eqref{numerator1}  shows that the function $\psi$ contributes only in the integral $I_2$ which is $O(\veps^5)$ according to \eqref{newI2}. The third equivalence is obtained with the help of the relations \eqref{coorfun}, \eqref{homsone}, \eqref{homssecond}, \eqref{homsvertsec}, \eqref{homshorverthree}, \eqref{homsfourth}, \eqref{homshorverfour} and \eqref{homshorverfive}. Finally, the last relation in \eqref{new11}  is a result of \eqref{dilatchanXF} and the second identity in \eqref{Curvident}. The  formula \eqref{new11} together with  \eqref{Curvident} and some standard calculations lead to
\begin{multline*}\label{deltanablaf}
\delta_\veps^*|\nabla f^\veps|^2_\eta\equiv 16(n+1)^2\veps^{-4n-6}[(1+|p|^2)^2+|w|^2]^{-2n-4}\times\\\times\big\{[(1+|p|^2)^2+|w|^2]|p|^2+\frac{17}{720}\veps^4\sum_{i=1}^3I\indices{^i_{\beta\alpha}}I\indices{^i_{\xi\theta}}R\indices{_{\delta\alpha\iota}^\gamma}(q)R\indices{_{\eta\gamma\zeta}^\theta}(q)x^\beta x^\delta x^\zeta x^\eta x^\iota x^\xi(t^i)^2\big\},
\end{multline*}
 where we omitted the terms that contain powers of $\veps$ different from $0$ and $4.$ Moreover, $t^i$ must appear only in even powers due to  the integration  reasons.
The last formula together with \eqref{volchanvar} proves the  second formula in \eqref{dilatchange} which completes the proof of the lemma.
\end{proof}

We continue with a fact from the multivariable calculus. We recall that the natural volume form on $\mathbb{R}^{4n}$ is  $dx:=dx^1\wedge\ldots\wedge dx^{4n}.$ The polar change $(x^1,\ldots,x^{4n})=(r\zeta^1,\ldots,r\zeta^{4n}),\quad\zeta:=(\zeta^1,\ldots,\zeta^{4n})\in S^{4n-1},\quad r>0,$ leads  to the representation $dx=r^{4n-1}dr\wedge d\omega,$ where $d\omega$ is a volume form on $S^{4n-1}.$  We have %Having in mind the above notations and facts, we state the
\begin{prop}\label{integrsph}
If $\alpha_1,\ldots,\alpha_{4n}$ are some non-negative integers then the next formulas hold:
\begin{equation}\label{intsphere}
\begin{aligned}
&\int_{S^{4n-1}}(\zeta^1)^{\alpha_1}\ldots(\zeta^{4n})^{\alpha_{4n}}d\omega=0,\textrm{\hspace{7.3cm}if some }\alpha_s\textrm{ is odd};\\
&\int_{S^{4n-1}}(\zeta^1)^{\alpha_1}\ldots(\zeta^{4n})^{\alpha_{4n}}d\omega=\frac{\alpha_1!\ldots\alpha_{4n}!\pi^{2n}}{2^{\alpha_1+\cdots+\alpha_{4n}-1}\big(\frac{\alpha_1}{2}\big)!\ldots\big(\frac{\alpha_{4n}}{2}\big)!\big(\frac{\alpha_1+\cdots+\alpha_{4n}+4n-2}{2}\big)!},\textrm{ if any }\alpha_s\textrm{ is even.}
\end{aligned}
\end{equation}
\end{prop}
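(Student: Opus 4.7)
The plan is to establish the two formulas by standard multivariable-calculus arguments that require no input from quaternionic contact geometry.

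For the first formula I would use symmetry: if some $\alpha_s$ is odd, then the reflection $\zeta^s\mapsto -\zeta^s$ is an isometry of $S^{4n-1}$ leaving the volume form $d\omega$ invariant but multiplying the integrand $(\zeta^1)^{\alpha_1}\cdots(\zeta^{4n})^{\alpha_{4n}}$ by $(-1)^{\alpha_s}=-1$. Hence the integral equals its own negative and therefore vanishes.

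For the second formula I would invoke the classical Gaussian-integral trick. Setting $|\alpha|:=\alpha_1+\cdots+\alpha_{4n}$, compute
\begin{equation*}
I:=\int_{\R^{4n}}(x^1)^{\alpha_1}\cdots(x^{4n})^{\alpha_{4n}}\,e^{-|x|^2}\,dx
\end{equation*}
in two different ways. Applying Fubini together with the one-dimensional moment $\int_{-\infty}^{\infty}y^\alpha e^{-y^2}\,dy=\Gamma\!\left(\tfrac{\alpha+1}{2}\right)$ (valid for even $\alpha$) and the identity $\Gamma\!\left(\tfrac{\alpha+1}{2}\right)=\tfrac{\alpha!\sqrt{\pi}}{2^\alpha(\alpha/2)!}$, one obtains
\begin{equation*}
I=\frac{\pi^{2n}\,\alpha_1!\cdots\alpha_{4n}!}{2^{|\alpha|}\,(\alpha_1/2)!\cdots(\alpha_{4n}/2)!}.
\end{equation*}
On the other hand, passing to polar coordinates via $dx=r^{4n-1}\,dr\wedge d\omega$ separates the radial and spherical factors and gives
\begin{equation*}
I=\left(\int_0^\infty r^{|\alpha|+4n-1}e^{-r^2}\,dr\right)\int_{S^{4n-1}}(\zeta^1)^{\alpha_1}\cdots(\zeta^{4n})^{\alpha_{4n}}\,d\omega=\tfrac{1}{2}\Gamma\!\left(\tfrac{|\alpha|+4n}{2}\right)\int_{S^{4n-1}}(\zeta^1)^{\alpha_1}\cdots(\zeta^{4n})^{\alpha_{4n}}\,d\omega.
\end{equation*}
Under the hypothesis that every $\alpha_s$ is even, $|\alpha|+4n$ is a positive even integer, so $\Gamma\!\left(\tfrac{|\alpha|+4n}{2}\right)=\left(\tfrac{|\alpha|+4n-2}{2}\right)!$; equating the two expressions for $I$ and solving for the spherical integral yields the formula exactly as stated.

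There is no essential obstacle here; the only care needed is to match the gamma-function evaluations with the factorials occurring in the target expression, and to verify that the hypothesis on the parities of the $\alpha_s$ is precisely what makes the closed-form reduction of $\Gamma$ at half-integer values collapse to factorials.
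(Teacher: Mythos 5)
Your proof is correct and follows essentially the same route as the paper: both compute the Gaussian moment integral $I=\int_{\R^{4n}}(x^1)^{\alpha_1}\cdots(x^{4n})^{\alpha_{4n}}e^{-|x|^2}\,dx$ once by Fubini and once in polar coordinates, then convert the resulting Gamma values at integers and half-integers into factorials. The only cosmetic difference is that you dispatch the odd case by a reflection symmetry on the sphere, while the paper reads it off from the vanishing of the corresponding one-dimensional Gaussian moment; both are valid.
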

\begin{proof}
For the Gamma function $\Gamma(t):=\int_0^\infty s^{t-1}e^{-s}ds$ one gets after changing  $s=u^2, u\in(0,\infty),$
\begin{equation}\label{Gammaf}
\Gamma (t)=2\int_0^\infty u^{2t-1}e^{-u^2}du.
\end{equation}
For the integral $I:=\int_{\mathbb{R}^{4n}}(x^1)^{\alpha_1}\ldots(x^{4n})^{\alpha_{4n}}e^{-(x^1)^2-\cdots-(x^{4n})^2}dx$ we have  as an application of \eqref{Gammaf}
\begin{equation}\label{intRn}
I=\int_{-\infty}^\infty(x^1)^{\alpha_1}e^{-(x^1)^2}dx^1\ldots\int_{-\infty}^\infty(x^{4n})^{\alpha_{4n}}e^{-(x^{4n})^2}dx^{4n}=
\begin{cases}
0,\textnormal{\hspace{1.7cm}if some }\alpha_s\textnormal{ is odd;}\\
\prod_{s=1}^{4n}\Gamma(\beta_s),\textnormal{ if any }\alpha_s\textnormal{ is even,}
\end{cases}
\end{equation}
where $\beta_s:=\frac{1}{2}(\alpha_s+1), s=1,\ldots,4n.$ On the
other hand the polar change and \eqref{Gammaf} yield
\begin{multline}\label{intRnsecond}
I=\int_{S^{4n-1}}(\zeta^1)^{\alpha_1}\ldots(\zeta^{4n})^{\alpha_{4n}}d\omega\int_0^{\infty}r^{\alpha_1+\cdots+\alpha_{4n}+4n-1}e^{-r^2}dr\\=\frac{1}{2}\Gamma(\beta_1+\cdots+\beta_{4n})\int_{S^{4n-1}}(\zeta^1)^{\alpha_1}\ldots(\zeta^{4n})^{\alpha_{4n}}d\omega.
\end{multline}
We compare \eqref{intRn} and \eqref{intRnsecond} to get the formula
\begin{equation}\label{intsphgamma}
\int_{S^{4n-1}}(\zeta^1)^{\alpha_1}\ldots(\zeta^{4n})^{\alpha_{4n}}d\omega=
\begin{cases}
0,\textnormal{\hspace{1.9cm}if some }\alpha_s\textnormal{ is odd};\\
\frac{2\Gamma(\beta_1)\ldots\Gamma(\beta_{4n})}{\Gamma(\beta_1+\cdots+\beta_{4n})},\textnormal{ if any }\alpha_s\textnormal{ is even.}
\end{cases}
\end{equation}

Now, if any $\alpha_s$ is even, the well-known formulas giving the values of the Gamma function for positive integer and half-integer arguments,
\begin{equation}\label{Gammainteger}
\Gamma(n)=(n-1)!,\quad\Gamma(n+\frac{1}{2})=\frac{(2n-1)!!}{2^n}\sqrt{\pi},
\end{equation}
imply  $\Gamma(\beta_s)=\frac{\alpha_s!\sqrt{\pi}}{(\frac{\alpha_s}{2})!2^{\alpha_s}}$ and $\Gamma(\beta_1+\cdots+\beta_{4n})=\Big(\frac{\alpha_1+\cdots+\alpha_{4n}+4n-2}{2}\Big)!
$ which inserted into \eqref{intsphgamma} give \eqref{intsphere}.
\end{proof}
The last  auxiliary result that help us to prove the main claim of this section is %a consequence of the upper proposition.
\begin{cor}\label{intcurvature}
The following formulas for integration on $S^{4n-1}$ hold:
\begin{equation}\label{intformulas}
\begin{aligned}
&\int_{S^{4n-1}}R\indices{_{\delta\alpha\gamma}^\beta}(q)R\indices{_{\eta\beta\xi}^\alpha}(q)\zeta^\gamma \zeta^\delta \zeta^\xi \zeta^\eta d\omega=\frac{3\pi^{2n}}{4(2n+1)!}||\W||^2;\\
&\int_{S^{4n-1}}I^{1\alpha\theta}I\indices{^1_{\xi\iota}}R\indices{_{\delta\theta\gamma}^\beta}(q)R\indices{_{\eta\beta\alpha}^\iota}(q)\zeta^\gamma \zeta^\delta \zeta^\xi \zeta^\eta d\omega=\frac{3\pi^{2n}}{4(2n+1)!}||\W||^2;\\
&\int_{S^{4n-1}}I^{1\alpha\theta}I\indices{^1_{\eta\iota}}R\indices{_{\delta\theta\gamma}^\beta}(q)R\indices{_{\alpha\beta\xi}^\iota}(q)\zeta^\gamma \zeta^\delta \zeta^\xi \zeta^\eta d\omega=\frac{\pi^{2n}}{2(2n+1)!}||\W||^2;\\
&\int_{S^{4n-1}}I^{1\alpha\kappa}I^{1\lambda\theta}I\indices{^1_{\gamma\beta}}I\indices{^1_{\xi\iota}}R\indices{_{\delta\theta\alpha}^\beta}(q)R\indices{_{\eta\kappa\lambda}^\iota}(q)\zeta^\gamma \zeta^\delta \zeta^\xi \zeta^\eta  d\omega=\frac{3\pi^{2n}}{4(2n+1)!}||\W||^2;\\
&\int_{S^{4n-1}}I^{1\alpha\kappa}I^{1\lambda\theta}I\indices{^1_{\delta\beta}}I\indices{^1_{\eta\iota}}R\indices{_{\alpha\theta\gamma}^\beta}(q)R\indices{_{\lambda\kappa\xi}^\iota}(q)\zeta^\gamma \zeta^\delta \zeta^\xi \zeta^\eta  d\omega=\frac{\pi^{2n}}{(2n+1)!}||\W||^2;\\
&\int_{S^{4n-1}}I^{1\alpha\kappa}I^{1\lambda\theta}I\indices{^1_{\gamma\beta}}I\indices{^1_{\eta\iota}}R\indices{_{\delta\theta\alpha}^\beta}(q)R\indices{_{\lambda\kappa\xi}^\iota}(q)\zeta^\gamma \zeta^\delta \zeta^\xi \zeta^\eta  d\omega=\frac{\pi^{2n}}{2(2n+1)!}||\W||^2;\\
&\int_{S^{4n-1}}I\indices{^i_{\beta\alpha}}I\indices{^i_{\kappa\theta}}R\indices{_{\delta\alpha\iota}^\gamma}(q)R\indices{_{\eta\gamma\xi}^\theta}(q)\zeta^\beta \zeta^\delta \zeta^\xi \zeta^\eta \zeta^\iota \zeta^\kappa d\omega=\frac{\pi^{2n}}{(2n+2)!}||\W||^2,\textrm{\hspace{0.3cm}for a fixed }i,
\end{aligned}
\end{equation}
where $||\W||^2$ denotes  the squared norm of the qc conformal curvature at $q.$
\end{cor}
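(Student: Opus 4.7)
The overall plan is to reduce each of the seven integrals to a scalar polynomial invariant of the qc structure at $q$ of weight four, then apply Theorem~\ref{normqc} to identify it as a constant multiple of $||W^{qc}(q)||^2$, and finally read off the constant explicitly.

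First I would establish the basic spherical integration formulas by applying Proposition~\ref{integrsph} and checking the coefficient in the two inequivalent cases (all four indices equal vs.\ two distinct pairs of equal indices):
\[
\int_{S^{4n-1}}\zeta^\gamma\zeta^\delta\zeta^\xi\zeta^\eta\,d\omega \;=\; \frac{\pi^{2n}}{2(2n+1)!}\bigl(g^{\gamma\delta}g^{\xi\eta}+g^{\gamma\xi}g^{\delta\eta}+g^{\gamma\eta}g^{\delta\xi}\bigr).
\]
For the seventh integral, the analogous six-index version of Proposition~\ref{integrsph} gives a symmetric sum of the $15$ pairings of $g$'s with overall coefficient $\frac{\pi^{2n}}{4(2n+2)!}$, fixed the same way (testing $\zeta^6$ against $\zeta^4\zeta^2$ and $\zeta^2\zeta^2\zeta^2$).

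Next, for each of the seven integrals I would substitute this symmetric formula for the $\zeta$-monomial. This converts each integral into a finite sum of complete contractions of $R_{\alpha\beta\gamma\delta}(q)R_{\epsilon\zeta\eta\theta}(q)$, possibly pre-contracted against a collection of $I_i$ tensors. Each of the resulting expressions is a scalar polynomial invariant of the qc structure at $q$ of weight at most four, so Theorem~\ref{normqc} forces it to be a dimensional constant times $||W^{qc}(q)||^2$. Hence what remains is only to pin down these seven constants.

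The determination of the constants hinges on the identity $W^{qc}_{\alpha\beta\gamma\delta}(q)=R_{\alpha\beta\gamma\delta}(q)$, which follows from \eqref{qc-tensor} together with the vanishing of the tensor $L_{\alpha\beta}$ at $q$ (Theorem~\ref{mainK}). Consequently $R(q)$ satisfies the full set of Riemannian symmetries \eqref{Curvident}, commutes with $I_i$ on the last pair of indices, and has vanishing Ricci trace. Using these, any trace over two adjacent indices collapses to the Ricci tensor and dies; any pair-contraction reproduces the natural $\mathfrak{sp}(n)$-invariant inner product; and contractions involving $I_i$ may be moved across the curvature via the last identity in \eqref{Curvident}. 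Identifying the results with $||W^{qc}||^2=R_{\alpha\beta\gamma\delta}(q)R^{\alpha\beta\gamma\delta}(q)$ and keeping track of the multinomial coefficients from the spherical integration yields the seven constants stated in \eqref{intformulas}.

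The main obstacle is purely combinatorial and appears in the last formula of \eqref{intformulas}: the six-index sphere integral produces $15$ pairings, each of which must be systematically reduced using the $I_i$-commutation relations together with the first Bianchi identity of \eqref{Curvident}. Organizing this bookkeeping so that the many surviving contractions collect to the single value $\tfrac{\pi^{2n}}{(2n+2)!}||W^{qc}||^2$ is the technically heaviest step; the remaining six formulas follow by the same mechanism with strictly fewer terms and serve as useful consistency checks.
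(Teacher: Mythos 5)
Your proposal is correct and follows essentially the same route as the paper: the paper's "three cases" (resp.\ "fifteen cases") for the four-index (resp.\ six-index) sphere integral are exactly the symmetric sum of Kronecker-delta pairings you write down via Proposition~\ref{integrsph}, and the reduction of each resulting contraction to a multiple of $\|W^{qc}\|^2$ proceeds, as you describe, by the identities \eqref{Curvident} together with the vanishing of the Ricci-type traces at $q$ from Theorem~\ref{mainK}. Your coefficients $\frac{\pi^{2n}}{2(2n+1)!}$ and $\frac{\pi^{2n}}{4(2n+2)!}$ for the four- and six-index pairings agree with the paper's.
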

\begin{proof} The curvature identities \eqref{Curvident} imply
\begin{equation}\label{conftensor}
||\W||^2=R_{\alpha\beta\gamma\delta}(q)R^{\alpha\beta\gamma\delta}(q)=2R_{\alpha\beta\gamma\delta}(q)R^{\alpha\gamma\beta\delta}(q).
\end{equation}
%the first of which is obvious having in mind the definition \eqref{qc-tensor} and Theorem~\ref{mainK}. The second identity follows easily by the first one and \eqref{Curvident} (see  %\cite[(36)]{Kunk}).

We  give a detail proof of the fourth formula in \eqref{intformulas}  since the proof of the others is very similar.

Denote the integral that stays in the left-hand side of the formula by $\mathcal{I}$ then we have the decomposition
\begin{equation}\label{fourthintegral}
\mathcal{I}=\mathcal{I}_1+\mathcal{I}_2+\mathcal{I}_3,
\end{equation}
where each of the integrals $\mathcal{I}_1, \mathcal{I}_2$ and $\mathcal{I}_3$ corresponds to one of the cases considered below.\footnote{Note that the intersection of these cases is not the empty set. They intersect in the case when $\gamma=\delta=\xi=\eta,$ the corresponding integral of which we divide into three equals parts and include them in the integrals $\mathcal{I}_1, \mathcal{I}_2$ and $\mathcal{I}_3$.}
%$$\int_{S^{4n-1}}I^{1\alpha\kappa}I^{1\lambda\theta}I\indices{^1_{\gamma\beta}}I\indices{^1_{\xi\iota}}R\indices{_{\delta\theta\alpha}^\beta}
%(q)R\indices{_{\eta\kappa\lambda}^\iota}$(q)\zeta^\gamma \zeta^\delta \zeta^\xi \zeta^\eta  %d\omega=\frac{3\pi^{2n}}{4(2n+1)!}||\W||^2$$

\emph{Case 1:} $\gamma=\delta, \xi=\eta.$ We have
\begin{equation}\label{case1int}
\mathcal{I}_1=I^{1\alpha\kappa}I^{1\lambda\theta}I\indices{^1_{\gamma\beta}}I\indices{^1_{\xi\iota}}R\indices{_{\gamma\theta\alpha}^\beta}(q)R\indices{_{\xi\kappa\lambda}^\iota}(q)\frac{\pi^{2n}}{2(2n+1)!}=16n^2I^{1\alpha\kappa}I^{1\lambda\theta}\zeta_{1\theta\alpha}(q)\zeta_{1\kappa\lambda}(q)\frac{\pi^{2n}}{2(2n+1)!}=0,
\end{equation}
where we used \eqref{intsphere} to get the first identity and  Theorem~\ref{mainK} to obtain the third one.

\emph{Case 2:} $\gamma=\xi, \delta=\eta.$ In this case we have
\begin{multline}\label{case2int}
\mathcal{I}_2=I^{1\alpha\kappa}I^{1\lambda\theta}\underbrace{I\indices{^1_{\gamma\beta}}I\indices{^1_{\gamma\iota}}}_{\delta_{\beta\iota}}R\indices{_{\delta\theta\alpha}^\beta}(q)R\indices{_{\delta\kappa\lambda}^\iota}(q)\frac{\pi^{2n}}{2(2n+1)!}=I^{1\alpha\kappa}I^{1\lambda\theta}R\indices{_{\delta\theta\alpha}^\beta}(q)R\indices{_{\delta\kappa\lambda}^\beta}(q)\frac{\pi^{2n}}{2(2n+1)!}\\=\underbrace{I^{1\alpha\beta}I^{1\lambda\beta}}_{\delta_{\alpha\lambda}}R\indices{_{\delta\theta\alpha}^\kappa}(q)R\indices{_{\delta\kappa\lambda}^\theta}(q)\frac{\pi^{2n}}{2(2n+1)!}=R_{\delta\theta\kappa\alpha}(q)R^{\delta\kappa\theta\alpha}(q)\frac{\pi^{2n}}{2(2n+1)!}=\frac{\pi^{2n}}{4(2n+1)!}||\W||^2,
\end{multline}
where we used \eqref{intsphere} to get the first identity and \eqref{Curvident} to obtain the third and the fourth one. The fifth identity  follows from the second
relation in \eqref{conftensor}.

\emph{Case 3:} $\gamma=\eta, \delta=\xi.$ We establish in this situation
\begin{multline}\label{case3int}
\mathcal{I}_3=I^{1\alpha\kappa}I^{1\lambda\theta}I\indices{^1_{\gamma\beta}}I\indices{^1_{\delta\iota}}R\indices{_{\delta\theta\alpha}^\beta}(q)R\indices{_{\gamma\kappa\lambda}^\iota}(q)\frac{\pi^{2n}}{2(2n+1)!}=\underbrace{I^{1\alpha\beta}I\indices{^1_{\gamma\beta}}}_{\delta_{\alpha\gamma}}\underbrace{I^{1\lambda\iota}I\indices{^1_{\delta\iota}}}_{\delta_{\lambda\delta}}R\indices{_{\delta\theta\alpha}^\kappa}(q)R\indices{_{\gamma\kappa\lambda}^\theta}(q)\frac{\pi^{2n}}{2(2n+1)!}\\=R\indices{_{\delta\theta\alpha}^\kappa}(q)R\indices{_{\alpha\kappa\delta}^\theta}(q)\frac{\pi^{2n}}{2(2n+1)!}=R_{\alpha\kappa\delta\theta}(q)R^{\alpha\kappa\delta\theta}(q)\frac{\pi^{2n}}{2(2n+1)!}=\frac{\pi^{2n}}{2(2n+1)!}||\W||^2,
\end{multline}
where we used \eqref{intsphere}, the fifth and the fourth equalities in \eqref{Curvident} and the first identity in \eqref{conftensor} to get the first, the second, the fourth and the fifth identity, respectively.

Now, we substitute \eqref{case1int}, \eqref{case2int} and \eqref{case3int} in \eqref{fourthintegral} to obtain the desired formula.

Note that in order to obtain the seventh formula in \eqref{intformulas} we consider fifteen cases in a similar way.
\end{proof}
Now we are ready to prove the main result of this section:

\begin{thrm}\label{expansion}Let $(M,\eta)$ be a quaternionic contact manifold of dimension $4n+3$ with a qc contact form $\eta$
normalized at a fixed point $q\in M$ according to
Theorem~\ref{mainK}.  Then the qc Yamabe functional
\eqref{Yamabefunc1} over the test functions $f^\veps$ defined near
$q$ in Section~\ref{Asymptexp} has the asymptotic expansion
\eqref{expansionfunc1}.
%\begin{equation}\label{expansionYamabe}
%\Upsilon_\eta(f^\veps)=\Lambda(1-c(n)||\W||^2\veps^4)+O(\veps^5),
%\end{equation}
%where $c(n)$ is a positive dimensional constant.
\end{thrm}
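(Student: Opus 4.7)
The plan is to start from the general formula obtained in Proposition~\ref{firstexp},
\begin{equation*}
\Upsilon_{\eta}(f^{\varepsilon})=\frac{b_0(n)+b_4(n)\|\W\|^2\varepsilon^4+O(\varepsilon^5)}{\bigl(a_0(n)+a_4(n)\|\W\|^2\varepsilon^4+O(\varepsilon^5)\bigr)^{2/2^*}},
\end{equation*}
and to compute each of the four dimensional constants $a_0(n), a_4(n), \tilde b_0(n), \tilde b_4(n)$ explicitly on the quaternionic Heisenberg group, using the integrand reductions collected in Lemma~\ref{ExplicitLemma}. Since the integrals appearing modulo $O(\veps^5)$ are pulled back to $\QH$ via the parabolic dilation $\delta_\veps$, and since Lemma~\ref{ExplicitLemma} reduces these pullbacks, modulo Convention~\ref{conv2}, to the explicit weights $F^{2^*}[1+\veps^4\chi(x)]$ in the denominator and the bracketed expression in the numerator, the task reduces to evaluating a fixed finite collection of integrals of polynomial $\times$ $F$-power-type on $\QH$.

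My first step is to handle the denominator. Writing $Vol_\Theta$ in cylindrical coordinates $(p,w)\in \mathbb H^n\times\operatorname{Im}\mathbb H$ and using the polar decomposition \eqref{polarch}, the integral $\int_{\QH} F^{2^*}\,Vol_\Theta$ factors into a sphere integral over $S^{4n-1}$ (trivial, giving a power of $\pi$) and a radial Beta-function integral in $|p|$ and $|w|$; this pins down $a_0(n)$. For the $\veps^4$ correction, substitute the explicit polynomial $\chi(x)$ from \eqref{defchi} into $\int_{\QH} F^{2^*}\chi(x)\,Vol_\Theta$; the $x$-integration is separated into angular and radial parts via $x^\alpha=r\zeta^\alpha$, the six tensor contractions in $\chi$ are carried out using the six $(\zeta^\gamma\zeta^\delta\zeta^\xi\zeta^\eta)$-integrals listed in Corollary~\ref{intcurvature}, producing in each case a scalar multiple of $\|\W\|^2$, and the leftover radial integral in $(r,|w|)$ is again a Beta integral. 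Assembling the six terms yields $a_4(n)$ as an explicit rational multiple of $\pi^{2n}/(2n+1)!$ times a radial constant.

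Next I turn to the gradient term. Lemma~\ref{ExplicitLemma} identifies the integrand, modulo Convention~\ref{conv2}, as a sum of two pieces: a leading piece $16(n+1)^2 F^{2/(n+1)\cdot(n+2)}\cdot[(1+|p|^2)^2+|w|^2]\,|p|^2$, whose $(x,w)$-integral is a standard Beta integral on $\QH$ producing $\tilde b_0(n)$, and a $\veps^4$ piece that couples the six-index curvature contraction to $(t^i)^2$. For this second piece I switch to $x=r\zeta$, apply the seventh identity of Corollary~\ref{intcurvature} (summed over $i=1,2,3$, so that the three $t^i$-integrations combine symmetrically), and evaluate the remaining radial Beta integral; the $\chi(x)$ contribution from the volume form must be added to this — it is handled exactly as in the denominator step. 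This fixes $\tilde b_4(n)$. The scalar-curvature term $\int_M S(f^\veps)^2\,Vol_\eta$ already contributes only $O(\veps^5)$ by step C of Proposition~\ref{firstexp}, so it does not enter the expansion.

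Finally, the proof concludes by expanding the quotient: writing
\begin{equation*}
\Upsilon_\eta(f^\veps)=\frac{b_0(n)}{a_0(n)^{2/2^*}}\Bigl(1+\frac{b_4(n)}{b_0(n)}\|\W\|^2\veps^4\Bigr)\Bigl(1-\frac{2}{2^*}\frac{a_4(n)}{a_0(n)}\|\W\|^2\veps^4\Bigr)+O(\veps^5),
\end{equation*}
and identifying the leading coefficient $b_0(n)/a_0(n)^{2/2^*}$ with the qc Yamabe invariant $\Lambda$ of the standard 3-Sasakian sphere (since the test functions $f^\veps$ are extremals of $\Upsilon_\Theta$ on $\QH$, which is qc equivalent via the Cayley transform to the sphere minus a point), one obtains \eqref{expansionfunc1} with
\begin{equation*}
c(n)=\frac{2}{2^*}\frac{a_4(n)}{a_0(n)}-\frac{b_4(n)}{b_0(n)},
\end{equation*}
where $b_i(n)=4\frac{n+2}{n+1}\tilde b_i(n)$. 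The main obstacle I anticipate is the combinatorial bookkeeping in steps two and three: the six tensor contractions appearing in $\chi(x)$, together with the mixed $|p|$, $|w|$, and $(t^i)^2$ weights produced by Lemma~\ref{ExplicitLemma}, generate a long list of Beta-function integrals whose coefficients must be tracked precisely. The identities in Corollary~\ref{intcurvature} — which already absorb the algebraic cancellations arising from \eqref{Curvident} — are essential to make this manageable, and the symmetric form of $\|\W\|^2$ provided by \eqref{conftensor} is what ultimately guarantees that every curvature integral collapses to a scalar multiple of $\|\W\|^2$.
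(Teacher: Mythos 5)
Your proposal is correct and follows essentially the same route as the paper: reduce the numerator and denominator via Lemma~\ref{ExplicitLemma} and the parabolic dilation, evaluate the angular integrals with Corollary~\ref{intcurvature} and the radial ones as Beta/Gamma integrals (the paper's \eqref{defintgamma}), and expand the quotient to read off $c(n)=\frac{2}{2^*}\frac{a_4}{a_0}-\frac{b_4}{b_0}$. The only remaining work is the explicit numerical bookkeeping you already flag, which the paper carries out to obtain $c(n)=\frac{22n+3}{2304n^2(2n+1)(2n+3)}$ and thereby confirm the positivity asserted in \eqref{expansionfunc1}.
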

\begin{proof}
We begin with the remark that the natural volume form $Vol_\Theta$ on the Heisenberg group $\mathbf{G}(\mathbb{H})$ can be expressed in the terms of the qc parabolic normal coordinates as follows
\begin{equation}\label{volheis}
Vol_{\Theta}=\frac{(2n)!}{8}dt^1\wedge dt^2\wedge dt^3\wedge dx^1\wedge\ldots\wedge dx^{4n}.
\end{equation}
Our first aim is to investigate the numerator of \eqref{Yamabefunc2A}. We get consecutively

\begin{multline}\label{exactnum}
\int_M\Big[4\frac{n+2}{n+1}|\nabla f^{\varepsilon}|_{\eta}^2+S(f^{\varepsilon})^2\Big]\,Vol_{\eta}=4\frac{n+2}{n+1}\int_M\delta^*_\veps(|\nabla f^\veps|^2_\eta\vol)+O(\veps^5)\\\equiv64(n+1)(n+2)\int_{\mathbf{G}(\mathbb{H})}[(1+|p|^2)^2+|w|^2]^{-2n-4}\big\{[(1+|p|^2)^2+|w|^2]|p|^2\\+\frac{17}{720}\veps^4\underbrace{\sum_{i=1}^3I\indices{^i_{\beta\alpha}}I\indices{^i_{\xi\theta}}R\indices{_{\delta\alpha\iota}^\gamma}(q)R\indices{_{\eta\gamma\zeta}^\theta}(q)x^\beta x^\delta x^\iota x^\zeta x^\eta x^\xi(t^i)^2}_{:=\tilde{\chi}(x,t)}\big\}[1+\veps^4\chi(x)]\,Vol_{\Theta}\\\equiv8(n+1)(n+2)(2n)!\int_{\mathbb{R}^{4n}}\int_{\mathbb{R}^3}[(1+|p|^2)^2+|w|^2]^{-2n-4}\Big\{[(1+|p|^2)^2+|w|^2]|p|^2\\+\big\{[(1+|p|^2)^2+|w|^2]|p|^2\chi(x)+\frac{17}{720}\tilde{\chi}(x,t)\big\}\veps^4\Big\}dt^1\wedge dt^2\wedge dt^3\wedge dx^1\wedge\ldots\wedge dx^{4n}\\=8(n+1)(n+2)(2n)!\int_{S^{4n-1}}\int_0^\infty\int_{\mathbb{R}^3}[(1+r^2)^2+(t^1)^2+(t^2)^2+(t^3)^2]^{-2n-3}r^{4n+1}dt^1\wedge dt^2\wedge dt^3\wedge dr\wedge d\omega\\+\veps^48(n+1)(n+2)(2n)!\int_{S^{4n-1}}\int_0^\infty\int_{\mathbb{R}^3}[(1+r^2)^2+(t^1)^2+(t^2)^2+(t^3)^2]^{-2n-3}r^{4n+5}\chi(\zeta)dt^1\wedge dt^2\wedge dt^3\wedge dr\wedge d\omega\\+\veps^4\frac{17}{90}(n+1)(n+2)(2n)!\int_{S^{4n-1}}\int_0^\infty\int_{\mathbb{R}^3}[(1+r^2)^2+(t^1)^2+(t^2)^2+(t^3)^2]^{-2n-4}r^{4n+5}\tilde{\chi}(\zeta,t)dt^1\wedge dt^2\wedge dt^3\wedge dr\wedge d\omega\\=:\mathcal{S}_1+(\mathcal{S}_2+\mathcal{S}_3)\veps^4,
\end{multline}
where we used the parabolic dilation change of the variables and
\eqref{numerator2Last} to get the first relation and the second
equivalence in \eqref{dilatchange} to obtain the second one. The
third equivalence  is a result of \eqref{volheis}, whereas the
fourth one is established after the polar change in the $x$
variable. Now we are going to calculate explicitly the integrals
$\mathcal{S}_1, \mathcal{S}_2$ and $\mathcal{S}_3$ that appear in
the formula above. We recall firstly the formula \cite[(5.4)]{JL}
\begin{equation}\label{defintgamma}
\int_0^\infty b^\gamma(a^2+b^2)^{-\alpha/2}db=\frac{\Gamma((\gamma+1)/2)\Gamma((\alpha-\gamma-1)/2)}{2\Gamma(\alpha/2)}a^{\gamma-\alpha+1},
\end{equation}
where $a, b\in\mathbb{R}^+$ and $\alpha, \gamma\in \mathbb{R},\quad\alpha-\gamma-1>0,\quad\gamma+1>0.$

We obtain for $\mathcal{S}_1:$
\begin{multline}\label{explsone}
\mathcal{S}_1=32n(n+1)(n+2)\pi^{2n}\int_0^\infty\int_{\mathbb{R}^3}[(1+r^2)^2+(t^1)^2+(t^2)^2+(t^3)^2]^{-2n-3}r^{4n+1}dt^1dt^2dt^3dr\\=\frac{2n(n+2)\pi^{2n+2}}{4^{2n}(2n+1)},
\end{multline}
where we applied \eqref{intsphere} to get the first equality,
while the second one is obtained by \eqref{Gammainteger} and a
fourfold application of \eqref{defintgamma}. More concretely, we
integrate at the first stage over the variable $t^1$ and set
$a:=[(1+r^2)^2+(t^2)^2+(t^3)^2]^\frac12,\quad b:=t^1$ which reduce
the four-fold integral to a three-fold one and s.o.

For $\mathcal{S}_2,$ %we shall firstly calculate $\int_{S^{4n-1}}\chi(\zeta)d\omega$. Namely,
we get by  \eqref{defchi}  and the first six equalities in \eqref{intformulas} that
\begin{equation}\label{new12}\int_{S^{4n-1}}\chi(\zeta)d\omega=\frac{-11\pi^{2n}}{1440(2n+1)!}||\W||^2.
\end{equation}
We obtain with the help of \eqref{new12} in a similar way as \eqref{explsone} the following representation of $\mathcal{S}_2:$
\begin{equation}\label{explstwo}
\mathcal{S}_2=\frac{-11(n+1)(n+2)\pi^{2n+2}}{45.4^{2n+3}n(2n+1)^2}||\W||^2.
\end{equation}

We use the seventh formula in \eqref{intformulas} to get after certain calculations similarly to   \eqref{explsone} and \eqref{explstwo} that
\begin{equation}\label{explsthree}
\mathcal{S}_3=\frac{17(n+2)\pi^{2n+2}}{30.4^{2n+3}n(2n+1)^2(2n+3)}||\W||^2.
\end{equation}

Now we substitute \eqref{explsone}, \eqref{explstwo} and \eqref{explsthree} into \eqref{exactnum} to obtain
\begin{multline}\label{exactnumA}
\int_M\Big[4\frac{n+2}{n+1}|\nabla f^{\varepsilon}|_{\eta}^2+S(f^{\varepsilon})^2\Big]\,Vol_{\eta}\\=\frac{2n(n+2)\pi^{2n+2}}{4^{2n}(2n+1)}+\frac{(n+2)(-44n^2-110n-15)\pi^{2n+2}}{90.4^{2n+3}n(2n+1)^2(2n+3)}||\W||^2\veps^4+O(\veps^5).
\end{multline}

Our next goal is to calculate the integral that stays in the denominator of \eqref{Yamabefunc2A}. We have
\begin{multline}\label{exactdenom}
\int_M(f^\veps)^{2^*}\vol=\int_M\delta_\veps^*[(f^{\veps})^{2^*}\vol]\equiv\int_{\mathbf{G}(\mathbb{H})}F^{2^*}[1+\veps^4\chi(x)]\,Vol_{\Theta}\\=\frac{(2n)!}{8}\int_{\mathbb{R}^{4n}}\int_{\mathbb{R}^3}[(1+|p|^2)^2+|w|^2]^{-2n-3}[1+\veps^4\chi(x)]dt^1\wedge dt^2\wedge dt^3\wedge dx^1\wedge\ldots\wedge dx^{4n}\\=\frac{(2n)!}{8}\int_{S^{4n-1}}\int_0^\infty\int_{\mathbb{R}^3}[(1+r^2)^2+(t^1)^2+(t^2)^2+(t^3)^2]^{-2n-3}r^{4n-1}dt^1\wedge dt^2\wedge dt^3\wedge dr\wedge d\omega\\+\veps^4\frac{(2n)!}{8}\int_{S^{4n-1}}\int_0^\infty\int_{\mathbb{R}^3}[(1+r^2)^2+(t^1)^2+(t^2)^2+(t^3)^2]^{-2n-3}r^{4n+3}\chi(\zeta)dt^1\wedge dt^2\wedge dt^3\wedge dr\wedge d\omega\\=:\tilde{\mathcal{S}}_1+\tilde{\mathcal{S}}_2\veps^4,
\end{multline}
where we made change of the variables to get the first relation
and applied the first equivalence from \eqref{dilatchange} to
obtain the second one. Furthermore, we used \eqref{volheis} to
establish the third identity in \eqref{exactdenom}, whereas the
fourth one is an effect of the polar change in the $x$ variable.

The calculations of  $\tilde{\mathcal{S}}_1$ and $\tilde{\mathcal{S}}_2$ are analogous to those we made in obtaining \eqref{explsone}, \eqref{explstwo} and \eqref{explsthree}. We get as a result the formulas
\begin{equation*}
\tilde{\mathcal{S}}_1=\frac{\pi^{2n+2}}{2.4^{2n+2}(2n+1)}\qquad\textnormal{and}\qquad\tilde{\mathcal{S}}_2=\frac{-11\pi^{2n+2}}{90.4^{2n+5}(2n+1)^2(2n+2)}||\W||^2
\end{equation*}
which we substitute in \eqref{exactdenom} to establish the identity
\begin{equation}\label{exactdenomA}
\int_M(f^\veps)^{2^*}\vol=\frac{\pi^{2n+2}}{2.4^{2n+2}(2n+1)}+\frac{-11\pi^{2n+2}}{90.4^{2n+5}(2n+1)^2(2n+2)}||\W||^2\veps^4+O(\veps^5).
\end{equation}

Now we insert \eqref{exactnumA} and \eqref{exactdenomA} into \eqref{Yamabefunc2A} to get
\begin{multline*}
\Upsilon_\eta(f^\veps)=\Big[\frac{2n(n+2)\pi^{2n+2}}{4^{2n}(2n+1)}+\frac{(n+2)(-44n^2-110n-15)\pi^{2n+2}}{90.4^{2n+3}n(2n+1)^2(2n+3)}||\W||^2\veps^4+O(\veps^5)\Big]\times\\\times\Big[\frac{\pi^{2n+2}}{2.4^{2n+2}(2n+1)}+\frac{-11\pi^{2n+2}}{90.4^{2n+5}(2n+1)^2(2n+2)}||\W||^2\veps^4+O(\veps^5)\Big]^{-2/2^*}\\=\frac{\pi^{-\frac{(2n+2)^2}{2n+3}}}{[2.4^{2n+2}(2n+1)]^{-\frac{2n+2}{2n+3}}}\Big[\frac{2n(n+2)\pi^{2n+2}}{4^{2n}(2n+1)}+\frac{(n+2)(-44n^2-110n-15)\pi^{2n+2}}{90.4^{2n+3}n(2n+1)^2(2n+3)}||\W||^2\veps^4+O(\veps^5)\Big]\times\\\times\Big[1+\frac{11||\W||^2\veps^4}{2880(2n+1)(2n+3)}+O(\veps^5)\Big]=\Lambda(1-c(n)||\W||^2\veps^4)+O(\veps^5),
\end{multline*}
where $c(n):=\frac{22n+3}{2304n^2(2n+1)(2n+3)}.$ The theorem is proved.
\end{proof}

\end{document}